\documentclass[12pt]{amsart}
\author{Abbey Bourdon}
\author{Pete L. Clark}
\author{Paul Pollack}
\address{University of Georgia \\ Mathematics Department \\ Boyd Graduate Studies Research Center \\ Athens, GA 30602\\USA}
\email{abourdon@uga.edu}
\email{pete@math.uga.edu}
\email{pollack@uga.edu}
\title{Anatomy of Torsion in the CM Case}
\subjclass[2010]{Primary: 11G15; Secondary: 11G05, 11N25, 11N37}
\usepackage{amsmath,amssymb,amsthm,sistyle,booktabs}
\usepackage{geometry}

\usepackage{tikz}
\usepackage{mathtools}

\SIthousandsep{,}
\geometry{left=1.2in, right=1.2in, top=.72in, bottom=.72in}
\DeclareMathAlphabet{\curly}{U}{rsfs}{m}{n}
\newtheorem{thm}{Theorem}[section]
\newtheorem{cor}[thm]{Corollary}
\newtheorem{prop}[thm]{Proposition}
\newtheorem{lem}[thm]{Lemma}
\newtheorem{conj}[thm]{Conjecture}

\theoremstyle{definition}
\theoremstyle{remark}
\newtheorem{rmk}{Remark}[section]
\newtheorem{remarks}[rmk]{Remarks}

\setlength{\parskip}{6pt}
\setlength{\parindent}{0pt}
\AtBeginDocument{\addtocontents{toc}{\protect\setlength{\parskip}{0pt}}}

\begin{document}

\newcommand\leg{\genfrac(){.4pt}{}}
\renewcommand{\labelenumi}{(\roman{enumi})}
\def\Bb{\curly{B}}
\def\ff{\mathfrak{f}}
\def\F{\mathbb{F}}
\def\N{\mathbb{N}}
\def\Q{\mathbb{Q}}
\def\Z{\mathbb{Z}}
\def\R{\mathbb{R}}
\def\O{\mathcal{O}}
\def\aa{\mathfrak{a}}
\def\pp{\mathfrak{p}}
\def\qq{\mathfrak{q}}
\def\Aa{\curly{A}}
\def\Dd{\curly{D}}
\def\Gg{\curly{G}}
\def\Pp{\curly{P}}
\def\Ss{\curly{S}}
\def\End{\mathrm{End}}
\def\M{\mathcal{M}}
\newcommand{\Ok}{\mathcal{O}_K}
\newcommand{\plc}[1]{{\color{red} \sf PLC: [#1]}}
\newcommand{\ppp}[1]{{\color{red} \sf PPP: [#1]}}
\newcommand{\ra}{\rightarrow}
\newcommand{\CM}{\mathrm{CM}}
\newcommand{\ord}{\operatorname{ord}}

%

\newcommand{\tors}{\operatorname{tors}}
\begin{abstract} Let $T_{\mathrm{CM}}(d)$ denote the maximum size of a torsion subgroup of a CM elliptic curve over a degree $d$ number field.  We initiate a systematic study of the asymptotic behavior of $T_{\CM}(d)$ as an ``arithmetic function''.  Whereas a recent result of the last two authors computes the upper
order of $T_{\CM}(d)$, here we determine the lower order, the typical order
and the average order of $T_{\CM}(d)$ as well as study the number of isomorphism
classes of groups $G$ of order $T_{\CM}(d)$ which arise as the torsion subgroup
of a CM elliptic curve over a degree $d$ number field.  To establish these analytic results we need to extend some prior algebraic results.  Especially, if $E_{/F}$ is a CM elliptic curve over a degree $d$ number field,
we show that $d$ is divisible by a certain function of $\# E(F)[\tors]$, and
we give a complete characterization of all degrees $d$ such that every torsion
subgroup of a CM elliptic curve defined over a degree $d$ number field
already occurs over $\Q$.
\end{abstract}
\maketitle
\tableofcontents

\section{Introduction}
\addtocounter{subsection}{-1}
\subsection{Terminology, notation and conventions}\noindent
Throughout, $\ell$ denotes a prime number. We say $\ell^{\alpha}$ \emph{exactly divides} $n$, and write $\ell^{\alpha} \parallel n$, if $\ell^{\alpha} \mid n$ but $\ell^{\alpha+1}\nmid n$. We use the notation $\omega(n)$ for the number of distinct primes dividing $n$, and we write $\Omega(n)$ for the number of primes dividing $n$ counted with multiplicity.

If $K$ is a number field, we let $\O_K$ denote its ring of integers, $\Delta_K$ its discriminant, $h_K$ its class number, and $w_K$ the number of roots of unity lying in $K$. For an ideal $\aa$ of $\O_K$, we denote by $K^{(\aa)}$ the $\aa$-ray class field of $K$.

We say an elliptic curve $E$ over a field of characteristic zero has $\O$-CM if $\End(E) \cong \O$, where $\O$ is an order in an imaginary quadratic field $K$. The statement ``$E$ has $K$-CM'' means that $E$ has $\O$-CM for some order $\O$ in $K$.

The \emph{torsion rank} of a finite abelian group $G$ is the minimal number of elements required to generate $G$.

Let $\Aa$ be a subset of the positive integers.  We define the \emph{upper
density}
\[ \overline{\delta}(\Aa) = \limsup_{x \rightarrow \infty} \frac{ \# \Aa \cap [1,x]}{x} \]
and the \emph{lower density}
\[ \underline{\delta}(\Aa) = \liminf_{x \rightarrow \infty} \frac{ \# \Aa \cap [1,x]}{x} \]
When $\overline{\delta}(\Aa) = \underline{\delta}(\Aa)$, we denote the common
quantity by $\delta(\Aa)$ and call it the \emph{asymptotic density} of $\Aa$.

\subsection{$T(d)$ versus $T_{\CM}(d)$}\noindent A celebrated theorem of L. Merel \cite{merel96} asserts that if $E$ is an elliptic curve defined over a degree $d$ number field $F$, then $\#E(F)[\textrm{tors}]$ is bounded by a constant depending only on $d$.  The best known bounds, due to J. Oesterl\'e (unpublished) and P. Parent \cite{parent99}, show that the prime powers appearing in the exponent of $E(F)[\textrm{tors}]$ are bounded by quantities which are exponential $d$.

For certain classes of curves one can do much better.  When the $j$-invariant of $E$ is an algebraic integer, Hindry and Silverman \cite{HS99} showed that for $d\ge 2$,
\[ \#E(F)[\textrm{tors}] \le 1977408 d\log{d}. \]
Under the stronger assumption that $E$ has complex multiplication (CM), it has recently been shown \cite{CP15} that there is an effectively computable $C > 0$ such that
\begin{equation}
\label{PETEPAULEQ}
\forall d \geq 3, \enskip  \#E(F)[\textrm{tors}] \le C d\log\log{d}.
\end{equation}
Let $T_{\mathrm{CM}}(d)$ denote the largest size of a torsion subgroup
of a CM elliptic curve defined over a number field of degree $d$.  Combining
(\ref{PETEPAULEQ}) with work of Breuer \cite{breuer10} gives
\begin{equation}
\label{PETEPAULEQ2}
 \limsup_{d \ra \infty} \frac{T_{\CM}(d)}{d \log \log d} \in (0,\infty).
\end{equation}
In particular (\ref{PETEPAULEQ}) is \emph{sharp} up to the value of $C$.

Let $T(d)$ be the largest size of a torsion subgroup of an elliptic curve
over a degree $d$ number field, and let $T_{\neg \CM}(d)$ be the largest size of the torsion subgroup of an elliptic curve \emph{without} complex multiplication
over a degree $d$ number field, so $T(d) = \max\{T_{\CM}(d),\ T_{\neg
\CM}(d)$\}.
We are far from knowing the truth about $T_{\CM}(d)$ but we expect --- cf.
\cite[$\S 1$]{CP15} --- that $T_{\neg \CM}(d) = O(\sqrt{d \log \log d})$.
Again Breuer's work provides
lower bounds to show that such an upper bound would be sharp up to a constant.
This would also imply that $T(d) = T_{\mathrm{CM}}(d)$ for infinitely many $d$.

It is not yet known whether $T(d) = T_{\mathrm{CM}}(d)$ for any $d \in \Z^+$.  We have  \cite{Mazur77, Sutherland12}
\[ T_{\mathrm{CM}}(1) = 6 < 16 = T(1), \enskip T_{\mathrm{CM}}(2) = 12 < 24 = T(2) . \]
Since these are the only known values of $T(d)$, finding values of $d$
for which $T(d) = T_{\mathrm{CM}}(d)$ seems beyond reach.  But $T_{\CM}(d)$ is known for infinitely many values, so we can find values of $d$ for which $T(d) > T_{\CM}(d)$.  Especially, by \cite[Theorem 1.4]{BCS15} we have
\[ \text{For all primes } p \geq 7, \ T_{\mathrm{CM}}(p) = 6 < 16 = T(1) \leq T(p). \]
Moreover, from \cite{TORS2} we know $T_{\CM}(d)$ for all $d \leq 13$, which
presents the prospect of showing $T(d) > T_{\CM}(d)$ for some further small
values of $d$ simply by exhibiting a non-CM elliptic curve in degree $d$
with large enough torsion subgroup.  We make use of the following recent
computational results:
\begin{itemize}
\item Najman \cite{Najman14}: $T(3) \geq 21$.
\item Jeon--Kim--Park \cite{JKP06}: $T(4) \geq 36$.
\item van Hoeij \cite{vanHoeij14}: $T(5) \geq 30$, $T(6) \geq 37$, $T(9)
\geq 34$.
\end{itemize}
Combining with the calculations of \cite{TORS2} we find:
\[ \forall d \in \{3,4,5,6,9\}, \enskip T(d) > T_{\CM}(d). \]
On the other hand, we have $T_{\mathrm{CM}}(8) = T_{\mathrm{CM}}(10) = 50$,
$T_{\mathrm{CM}}(12) = 84$, and there are no known non-CM elliptic curves with larger torsion subgroups in these degrees.  In degree $8$ the largest order
of a torsion point on a CM elliptic curve is $39$, whereas there is a point
of order $50$ on a non-CM elliptic curve in degree $8$.  However there is a
point of order $50$ on a CM elliptic curve of degree $10$, and $50$ is
the largest value of $N$ for which the tables in \cite{vanHoeij14} record a degree $10$ point on $Y_0(N)$.  Further comparison of the tables of
\cite{vanHoeij14} to the work of \cite{CCS13} and \cite{TORS2} gives several values of $N$ for which the smallest known degree of a point on $Y_1(N)$
is attained by a CM-point, e.g. $N \in \{57, 61,67,73,79\}$.

In summary, it seems that the tools are not yet available to determine $T(d)$ for more than a few values of $d$, let alone to arrive at a theoretical
understanding of the asymptotic behavior of this function.  Henceforth we
consider only the CM case, which is much more tractable and apparently
related to the non-CM case in interesting ways.

\subsection{Anatomy of $T_{\CM}(d)$}
\noindent
The goal of the present paper is to regard $T_{\mathrm{CM}}(d)$ as an ``arithmetic function'' and study its behavior for large values of $d$ in
the fashion that one studies functions like Euler's totient function $\varphi$.
From this perspective, (\ref{PETEPAULEQ2}) gives the \emph{upper order} of $T_{\mathrm{CM}}(d)$.  However, as with more classical arithmetic
functions, $T_{\mathrm{CM}}(d)$ exhibits considerable variation, and it is also interesting to ask about its lower order, its average order, and its ``typical order'' (roughly, its behavior away from a set of $d$ of small density).  It turns out that now is the right time to address these questions: by using --- and, in some cases, sharpening --- the results of \cite{BCS15} and \cite{CP15}, we find that we have enough information on the elliptic curve theory side
to transport these questions into the realm of elementary/analytic number theory and then answer them.

We first determine the typical order (in a reasonable sense) of $T_{{\rm CM}}(d)$.

\begin{thm}\label{thm:density}\mbox{ }
\begin{enumerate}
\item[(i)] For all $\epsilon > 0$, there is a
positive integer $B_{\epsilon}$ such that
\[ \overline{\delta}( \{d \in \Z^+ \mid T_{\CM}(d) \geq B_{\epsilon} \}) \leq \epsilon. \]
\item[(ii)] For all $B \in \Z^+$, we have
\[ \underline{\delta} ( \{d \in \Z^+ \mid T_{\CM}(d) \geq B \}) > 0. \]
\end{enumerate}
\end{thm}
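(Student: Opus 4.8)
\subsection*{Proof proposal}

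I would treat the two parts separately, with (ii) being comparatively soft and (i) carrying all of the analytic weight. For (ii) I would argue by base change. Fix $B$ and choose any CM elliptic curve $E_0$ over $\overline{\Q}$; since $E_0(\overline\Q)[\tors]$ contains points of every order, it contains a subgroup $G$ with $\#G \ge B$, and $E_0$ together with $G$ is defined over some number field $F_0$ with $m := [F_0:\Q] < \infty$. For every $k \ge 1$ there is a number field $L$ of degree $k$ linearly disjoint from $F_0$, so $F_0 L$ has degree $mk$ and still $G \hookrightarrow E_0(F_0 L)[\tors]$. Thus every multiple $d$ of $m$ has $T_{\CM}(d) \ge \#G \ge B$, whence $\{d : T_{\CM}(d) \ge B\} \supseteq m\Z^+$ and $\underline\delta(\{d: T_{\CM}(d)\ge B\}) \ge 1/m > 0$.

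For (i) I would first use the algebraic divisibility results (the refinements of \cite{BCS15} and \cite{CP15} promised in the abstract) to reduce to a purely arithmetic statement. A CM curve of degree $d$ with torsion of size $\ge B$ must acquire a point of some prime order $\ell$, and realizing a point of order $\ell$ forces $d$ to be divisible by $\tfrac{\ell-1}{w}$ for an appropriate $w \in \{2,4,6\}$ (an inert $\ell$ or a larger class number only strengthens this divisibility). I would split according to the largest prime factor of the torsion order. If it is $\le L$, then an $L$-smooth integer $\ge B$ has, by pigeonhole, a prime power $\ell^{a} \ge B^{1/\pi(L)}$ in its factorization, which forces a divisor $\ell^{a-1}\mid d$ of density $\ll \ell^{-(a-1)}$; summed over $\ell \le L$ this is small once $B$ is large relative to $L$. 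The remaining (non-smooth) case shows it suffices to prove
\[ \overline\delta\bigl(\{d \in \Z^+ : \exists\, \text{prime } \ell > L \text{ with } \tfrac{\ell-1}{w}\mid d,\ w\in\{2,4,6\}\}\bigr) \longrightarrow 0 \quad (L \to \infty). \]

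The main obstacle is exactly this last estimate, and the naive approach fails: the first-moment (union) bound is $\ll \sum_{\ell > L}\tfrac{1}{\ell}$, which diverges, reflecting that this set has an infinite expected number of certifying primes despite having small density. I would instead decompose $\ell \asymp 2^k$ into dyadic ranges and bound, for each $k$, the density of $d$ having a divisor $m \in (2^{k}/w, 2^{k+1}/w]$ with $wm+1$ prime. Here one must go beyond counting: the Erd\H{o}s--Ford theory of the distribution of divisors shows that a typical integer rarely has \emph{any} divisor in a prescribed dyadic interval, gaining a factor $(\log 2^k)^{-\delta}$, $\delta = 1 - \tfrac{1+\log\log 2}{\log 2} \approx 0.086$, over the first-moment heuristic; imposing that $wm+1$ be prime, controlled by a Brun--Titchmarsh bound, restricts to a subset of divisors of relative density $\asymp 1/\log 2^k \asymp 1/k$ and supplies an additional factor of this size. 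Together these should give a per-scale density $\ll k^{-1-\delta+o(1)}$, so that the sum over $k$ with $2^k > L$ converges and tends to $0$ as $L \to \infty$, yielding (i). The technical heart is thus establishing a shifted-prime analogue of Ford's divisor-distribution estimate, uniformly across dyadic scales, which is where I expect the real work to lie.
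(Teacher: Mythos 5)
Your part (ii) is exactly the paper's argument: adjoin a point of large order to a CM curve over $\Q$ to get a field $F_0$ of some degree $m$, note that torsion survives base change to any extension of $F_0$, and conclude that every multiple of $m$ lies in the set, so its lower density is at least $1/m$. Likewise, the skeleton of your part (i) matches the paper's proof: split according to whether the torsion order has a prime factor exceeding a threshold $L$; in the large-prime case use the divisibility results (Lemma \ref{lem:div}) to force $\ell - 1 \mid 12d$, and in the $L$-smooth case use the pigeonhole principle to extract a prime power $\ell^{\alpha} \geq B^{1/\pi(L)}$ dividing the torsion order, whose divisibility consequence costs density $\ll \ell^{-\alpha}$ per prime power, which is small once $B$ is large in terms of $L$. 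All of this is the same accounting the paper performs.

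The gap is in the one place you flag yourself. Your reduction lands on the assertion that the set of integers divisible by $\ell-1$ for some prime $\ell > L$ has upper density tending to $0$ as $L \to \infty$, and you propose to prove it by combining Ford's bound $(\log y)^{-\delta+o(1)}$ for the density of integers with a divisor in $(y,2y]$ with an additional factor $1/\log y$ from Brun--Titchmarsh, since the divisor must be a shifted prime. These two savings do not formally multiply: Ford's theorem counts integers with \emph{some} divisor in a dyadic interval, and there is no general principle that lets you intersect that event with ``the divisor has the form $(p-1)/w$'' at proportional cost. Establishing the per-scale bound $k^{-1-\delta+o(1)}$ is a genuine shifted-prime analogue of Ford's divisor-distribution theorem; it is essentially the machinery of \cite{LP14} and \cite{FLP14}, and the paper's remark following the theorem says precisely that those techniques yield the sharpening $(\log B)^{-\eta + o(1)}$ (with $\eta$ equal to your $\delta$), deferring details to another paper. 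So as written, your proposal is unproved exactly where all the analytic difficulty sits. The repair is far cheaper than what you propose: the qualitative statement you need is a known theorem of Erd\H{o}s and Wagstaff \cite[Theorem 2]{EW80} (Proposition \ref{lem:EW} in the paper), and quoting it closes your argument with no dyadic decomposition, divisor-distribution theory, or Brun--Titchmarsh input at all.
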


Though stated separately for parallelism, the proof of
Theorem \ref{thm:density}(ii) is immediate.  Indeed, starting with any CM elliptic curve $E/\Q$, we may adjoin the coordinates of a point of order $N$ to obtain a field $F_0$ of degree $d_0$ (say). Considering extensions of $F_0$, we find that $T_{\mathrm{CM}}(d) \ge N$ whenever $d_0 \mid d$ and thus
\[ \underline{\delta} ( \{d \in \Z^+ \mid T_{\CM}(d) \geq B \}) \geq \frac{1}{d_0}. \]


\noindent
We turn next to the average order of $T_{{\rm CM}}(d)$.

\begin{thm}\label{thm:avg0}\mbox{ }
\begin{enumerate}
\item[(i)] We have $ \frac{1}{x} \sum_{d \le x}T_{{\rm CM}}(d) = x/(\log{x})^{1+o(1)}$.  In other words: for all $c < 1$,
\[ \lim_{x \ra \infty} \frac{ \frac{1}{x} \sum_{d \leq x} T_{\CM}(d)}{x/\log^c x} = 0, \]
and for all $C > 1$ we have
\[ \lim_{x \ra \infty} \frac{ \frac{1}{x} \sum_{d \leq x} T_{\CM}(d)}{x/\log^C x} = \infty. \]

\item[(ii)]
We have $\frac{1}{x}\sum_{\substack{d \le x\\ 2\nmid d}} T_{{\rm CM}}(d) = x^{1/3 + o(1)}$.  In other words: for all $c < \frac{1}{3}$,
\[ \lim_{x \ra \infty} \frac{\frac{1}{x}\sum_{\substack{d \le x\\ 2\nmid d}} T_{{\rm CM}}(d)}{x^c} = \infty, \]
and for all $C > \frac{1}{3}$,
 \[ \lim_{x \ra \infty} \frac{\frac{1}{x}\sum_{\substack{d \le x\\ 2\nmid d}} T_{{\rm CM}}(d)}{x^C} = 0. \]

\end{enumerate}
\end{thm}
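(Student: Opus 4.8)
The plan is to deduce both statements from the algebraic results by rewriting $T_{\CM}(d)$ as (up to bounded factors) the value of an explicit arithmetic function, and then to estimate the resulting sums. Concretely, the upper order bound $T_{\CM}(d)\le Cd\log\log d$ together with the divisibility statement from the abstract---that torsion of order $N$ on a CM curve over a degree $d$ field forces $\Phi(N)\mid d$ for a suitable $\Phi$---and the reverse realizability statements should give $T_{\CM}(d)\asymp\max\{N:\Phi(N)\mid d\}$. The structural point is that the cheapest way to make this large is to realize a cyclic group of prime order, for which $\Phi(N)\asymp\varphi(N)$; the governing quantity is then
\[ P_1(d):=\max\{\ell\ \mathrm{prime}:(\ell-1)\mid d\}. \]
For odd $d$ this route disappears, since $\ell-1$ is even for every odd prime $\ell$, and the algebraic input instead shows that the operative cost is $\Phi(N)=N^{3/2+o(1)}$; this change of regime is exactly what separates the two parts.

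For (i), the lower bound is immediate. For each prime $\ell$ split in a fixed imaginary quadratic field of class number one, a point of order $\ell$ on the associated CM curve is defined over a field of degree dividing $\ell-1$, so $T_{\CM}(\ell-1)\ge\ell$; as such $\ell$ have positive density and the integers $\ell-1$ are distinct,
\[ \sum_{d\le x}T_{\CM}(d)\ \ge\ \sum_{\substack{\ell\le x+1\\ \ell\ \mathrm{split}}}\ell\ \gg\ \frac{x^{2}}{\log x}. \]
For the upper bound I would show $T_{\CM}(d)\le C\,P_1(d)$ off a negligible set of $d$ (treated below) and then estimate cleanly
\[ \sum_{d\le x}P_1(d)\ \le\ \sum_{d\le x}\ \sum_{\substack{\ell\ \mathrm{prime}\\ (\ell-1)\mid d}}\ell\ =\ \sum_{\ell}\ell\Big\lfloor\frac{x}{\ell-1}\Big\rfloor\ \le\ x\sum_{\ell\le x+1}\frac{\ell}{\ell-1}\ \ll\ \frac{x^{2}}{\log x}. \]
Together these give $\sum_{d\le x}T_{\CM}(d)=x^{2}/(\log x)^{1+o(1)}$, which is the asserted average.

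For (ii), with the prime mechanism gone the operative cost is $\Phi(N)\asymp N^{3/2}$, and the upper bound is governed by the divisor sum
\[ \sum_{\substack{d\le x\\ 2\nmid d}}T_{\CM}(d)\ \le\ \sum_{N}N\cdot\#\{d\le x:\,2\nmid d,\ \Phi(N)\mid d\}\ \ll\ x\sum_{N\le x^{2/3}}\frac{N}{N^{3/2}}\ \ll\ x^{4/3+o(1)}, \]
the cutoff $N\le x^{2/3}$ coming from $\Phi(N)\le x$. For the matching lower bound I would use odd-degree realizability to produce $\asymp x^{2/3}$ admissible orders $N\asymp x^{2/3}$ with $\Phi(N)\le x$ odd, and take $d=\Phi(N)$: each such (distinct, odd) $d$ satisfies $T_{\CM}(d)\ge N$, whence $\sum_{2\nmid d\le x}T_{\CM}(d)\gg x^{2/3}\cdot x^{2/3}=x^{4/3-o(1)}$. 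Dividing by $x$ gives the average $x^{1/3+o(1)}$.

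The main obstacle is the upper bound in (i): the pointwise inequality $T_{\CM}(d)\le C\,P_1(d)$ genuinely fails when the optimal order is built from small primes (for instance when $d$ is highly $2$-divisible), so I would split $N^{\ast}(d):=\max\{N:\Phi(N)\mid d\}$ according to its largest prime factor $P^{+}$. For $P^{+}>y$ one has $(P^{+}-1)\mid d$ and the contribution is absorbed by the shifted-prime estimate above; for $P^{+}\le y$ the order $N^{\ast}$ is $y$-smooth with $\varphi(N^{\ast})\mid d$ (recall $\Phi\asymp\varphi$), and a smooth-number bound with $y=\exp(\sqrt{\log x})$ should make this part $O\!\big(x^{2}/(\log x)^{A}\big)$ for every $A$. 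Executing this while simultaneously controlling the cofactor $N^{\ast}/\varphi(N^{\ast})$ is the delicate analytic step. The remaining, genuinely algebraic, obstacle underlying (ii) is to establish both directions of $\Phi(N)=N^{3/2+o(1)}$ in odd degree, together with the density of admissible odd-degree orders needed for the lower bound---refinements of the divisibility and realizability theorems proved earlier.
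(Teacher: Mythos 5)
Your two lower bounds are sound and essentially the paper's own: for (i) you realize prime-order torsion in degree dividing $2(\ell-1)$ (the paper uses Proposition \ref{prop:CCS1A}, degree $\frac{\ell-1}{3}$) and sum over primes; for (ii) you use ramified primes $\ell \equiv 3 \pmod 4$ in degree $\frac{\ell-1}{2}h_{\Q(\sqrt{-\ell})}$, as in Proposition \ref{prop:nonolson} --- though your assertion that the resulting degrees are ``(distinct, odd)'' is not free: oddness needs genus theory, and distinctness needs the paper's pruning argument via the maximal order of the divisor function. The genuine gap is the upper bound in (i), which is the hard half of the theorem and which your plan leaves open at exactly the critical point. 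The pointwise bound $T_{\CM}(d)\le C\,P_1(d)$ fails on a set that is \emph{not} negligible: the optimal torsion order can combine a large prime $\ell$ with full $2$-power torsion (full $2^a$-torsion costs only $\asymp 4^a$ in the degree), giving $T_{\CM}(d)\gg d$ while $P_1(d)\asymp 4^{-a}d$, for $d$ ranging over sets of total size comparable to the main term. Your proposed repair does not close either: (a) to control the smooth cofactor $m$ of $N^*=\ell^a m$ you must invoke the full multiplicative divisibility statement (Theorem \ref{lem:div2}: $12d$ is divisible by some $\lambda\in\Lambda(n)$ with $\lambda \gg n/(\log\log n)^2$) and then sum over the candidate orders $n$; a union bound over $m$ costs a factor like $\sum_{m\le M}3^{\omega(m)}\ll M(\log M)^2$, i.e.\ two powers of $\log x$, which obliterates the $1/\log x$ saving the theorem requires; (b) if instead you fall back on the trivial bound $T_{\CM}(d)\ll x\log\log x$ on the large-$P^+$ set, then with $y=\exp(\sqrt{\log x})$ that set has density about $(\log y)^{-\eta+o(1)}=(\log x)^{-\eta/2+o(1)}$ with $\eta = 0.086\dots$ (Erd\H{o}s--Wagstaff; see the remark after Theorem \ref{thm:density}), which is far larger than $1/\log x$. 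What is missing is the paper's Erd\H{o}s-style anatomy argument: count integers $D\le 12x$ divisible by some $\lambda\in\Lambda(n)$, $n>x/\log{x}$, in four cases keyed to $\omega(n)$ and to how many primes $\ell\mid n$ have $\Omega(\ell-1)$ large, using Hardy--Ramanujan (Lemma \ref{lem:HRHT}(i)), the sieve bound for primes with $\Omega(\ell-1)=O(1)$, and the count of integers with many prime factors (Lemma \ref{lem:HRHT}(ii)). The point is that when $\omega(n)$ is small, $\#\Lambda(n)\le 3^{\omega(n)}$ is small enough for a union bound, and when $\omega(n)$ is large, divisibility forces $\Omega(D)$ to be large and such $D$ are rare; nothing in your proposal plays this role.

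Two further corrections on (ii). Your structural claim that the odd-degree cost is $\Phi(N)=N^{3/2+o(1)}$ is false as stated: for $N=\ell^a$ with $\ell$ fixed (e.g.\ $\ell=7$, $h_{\Q(\sqrt{-7})}=1$) the cost is $\asymp N$. The exponent $4/3$ survives because such prime powers are sparse; the paper's computation sums a contribution $\ll x/h_{\Q(\sqrt{-\ell})}$ over prime powers $\ell^\alpha$, with $O(\log x)$ exponents per $\ell$ and $\ell \le x^{2/3+\epsilon}$ by Siegel, rather than using a single cost function of $N$. Also, what you defer as an ``algebraic obstacle'' requiring new refinements is already available and is exactly what the paper cites: Proposition \ref{prop:oddthm} (odd-degree torsion is $\Z/\ell^n\Z$ or $\Z/2\ell^n\Z$ with $\ell\equiv 3\pmod 4$), the forcing of the CM field to be $\Q(\sqrt{-\ell})$ (Proposition \ref{prop:realcyclo} plus $4\nmid[FK:\Q]$), Lemma \ref{lem:div} with its factor of $h_K$, Siegel's lower bound together with $h_{\Q(\sqrt{-\ell})}\le\ell^{1/2}\log\ell$, and Proposition \ref{prop:nonolson} for realizability. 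So part (ii) of your plan is repairable from quoted results, but part (i)'s upper bound needs an argument you have not supplied.
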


\begin{remarks}\mbox{ }
\begin{enumerate}
\item[(i)] The average order of $T_{{\rm CM}}(d)$ restricted to odd degrees is considerably smaller than its average order restricted to even
degrees. This is another confirming instance of the odd/even dichotomy explored in \cite{BCS15}.
\item[(ii)] The average order of $T_{{\rm CM}}(d)$ is considerably larger than the conjectural maximal order $\sqrt{d \log \log d}$ of $T(d)$.
\end{enumerate}
\end{remarks}
\noindent
Now we turn to the \emph{lower order} of $T_{\mathrm{CM}}(d)$.  When $E$ is a CM elliptic curve over $\Q$, Olson \cite{olson74} showed that there are precisely six possibilities for the group $E(\Q)[{\rm tors}]$ (up to isomorphism): the trivial group $\{\bullet\}$, $\Z/2\Z$, $\Z/3\Z$, $\Z/4\Z$, $\Z/6\Z$, and $\Z/2\Z \times \Z/2\Z$.  We call these the \emph{Olson groups}. From \cite[Theorem 2.1(a)]{BCS15} we know that for any abelian variety defined
over a number field $A_{/F}$ and all integers $d \geq 2$, there are infinitely
many degree $d$ extensions $L/F$ with $A(L)[\tors] = A(F)[\tors]$.  In particular, since the Olson groups occur over $\Q$, each of them occurs as the torsion subgroup of a CM elliptic curves over a number field of every degree, and thus $T_{\mathrm{CM}}(d) \geq 6$ for all $d$.  Let us say that $d \in \Z^+$ is an \emph{Olson degree} if the only torsion subgroups of CM elliptic curves in
degree $d$ are Olson groups.  In \cite[Theorem 1.4]{BCS15} it was shown
that every prime number $d \geq 7$ is an Olson degree.  We deduce
\[
\liminf_{d \ra \infty} T_{\mathrm{CM}}(d) = 6.
\]

\begin{rmk}
If $d$ is an Olson degree, then $T_{\CM}(d) = 6$.  In fact the
converse holds, so the Olson degrees are precisely the degrees at which
$T_{\CM}(d)$ attains its minimum value.  This comes down to showing that if $T_{\CM}(d) = 6$, then
there is no CM elliptic curve $E$ defined over a degree $d$ number field $F$
with an $F$-rational point of order $5$.  But from \cite[Theorem 1.5]{BCS15},
the existence of such an $E_{/F}$ forces $d$ to be even, and thus
$T_{\CM}(d) \geq T_{\CM}(2) = 12$.
\end{rmk}

It is natural to ask for more precise information about the Olson degrees.  Above we saw that the upper order of $T_{{\rm CM}}(d)$ is attained (or even approached) only on a very small set of $d$'s.  The result that all prime degrees $d \geq 7$ are Olson leaves open the possibility
that the set of Olson degrees has density zero.  In fact this is not the case.

\begin{thm}\label{thm:density2} The set of Olson degrees has positive asymptotic density.
\end{thm}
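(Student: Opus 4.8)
The plan is to pass to the complement and show that the set of \emph{non-Olson} degrees has upper density strictly less than $1$; equivalently, that the Olson degrees have positive lower density. Call a finite abelian group \emph{non-Olson} if it is not one of the six Olson groups, and recall that a degree $d$ is non-Olson exactly when some non-Olson group arises as $E(F)[\tors]$ for a CM elliptic curve $E$ over a field $F$ with $[F:\Q]=d$. The orders of the Olson groups are $1,2,3,4,6$, so every non-Olson group has order in $\mathcal N := \{5\}\cup\{n\in\Z^+ : n\ge 7\}$. The key algebraic input is the divisibility theorem announced in the abstract: there is a function $f$ such that whenever $E(F)[\tors]$ has order $n$, the integer $f(n)$ divides $d$. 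Consequently
\[ \{\,d : d \text{ is non-Olson}\,\} \subseteq \bigcup_{n\in\mathcal N} \{\,d : f(n)\mid d\,\}, \]
and it suffices to show that this union omits a positive density of integers.

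I would split the union according to the \emph{parity} of the forced modulus $f(n)$, exploiting the odd/even dichotomy of \cite{BCS15}. The prototype is the cited fact that a point of order $5$ forces $d$ to be even; I expect, and would prove, that for all but a sparse family of orders $n$ the modulus $f(n)$ is even — morally because the CM field contributes a factor $2=[K:\Q]$ to the degree of a torsion-point field, and the generic ray-class contribution is even as well. For the residual orders for which $f(n)$ is odd (these should be essentially prime powers of primes ramifying in $K$, e.g.\ powers of $3$), I would invoke the upper bound (\ref{PETEPAULEQ}): since $n\le C\,d\log\log d$ whenever $n$ is realized in degree $d$ with $f(n)\mid d$, one obtains $f(n)\gg n/\log\log n$, so these odd moduli are large and hence sparse. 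Concretely, the aim is to establish $\sum_{n:\, f(n)\text{ odd}} 1/f(n) < \infty$.

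Granting this, the conclusion is a sieve. Every even value $f(n)$ divides only even integers, so all odd $d$ automatically avoid that part of the union; among the odd $d$, the only remaining constraints are divisibility by the odd moduli $f(n)$, a set whose reciprocals sum to a finite quantity. By an inclusion–exclusion (Eratosthenes) argument, a positive proportion of the odd integers are divisible by none of them, and since the odd integers already have density $\tfrac12$ this yields a positive-density set of Olson degrees. One may even record the explicit lower bound $\tfrac12\prod\bigl(1-1/f(n)\bigr)$ over the odd moduli, positive precisely because the associated series converges.

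The main obstacle is controlling the \emph{infinite} union: because the divisibility function grows only linearly in $n$ (up to the $\log\log$ factor), the full series $\sum_{n\in\mathcal N} 1/f(n)$ diverges, so no naive union bound can give density below $1$. The proof must therefore extract genuine arithmetic structure — that, apart from a summable exceptional family, the forced degree is even — and this is exactly where the sharpened algebraic results enter. The delicate points are (i) handling CM curves whose field of moduli does not contain $K$ (such as the rational-$j$ curves), where a torsion-point field can have odd degree, and verifying that such odd-degree realizations force $n$ into the sparse exceptional family; and (ii) pinning down the parity of $f(n)$ uniformly across the ramified-prime and prime-power cases, so that the odd part of the union is provably summable.
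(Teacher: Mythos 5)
Your overall architecture --- discard the even degrees, show that the ``forced odd divisors'' have convergent reciprocal sum, and finish with a sets-of-multiples sieve --- is the same skeleton as the paper's proof: Theorem \ref{OLSONDEGREETHM} exhibits the non-Olson degrees as $M(\Gg)$ with $\Gg = \{2\} \cup \bigl\{\frac{\ell-1}{2}h_{\Q(\sqrt{-\ell})} : \ell \equiv 3 \pmod 4,\ \ell > 3\bigr\}$, and Lemma \ref{lem:hall} plays the role of your sieve. But there is a genuine gap at precisely the step you label as the ``aim'': the convergence of $\sum_{n:\, f(n)\ \mathrm{odd}} 1/f(n)$. Your justification is $f(n) \gg n/\log\log n$, deduced from \eqref{PETEPAULEQ}, and this is doubly problematic. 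First, the inference is backwards: \eqref{PETEPAULEQ} lower-bounds any \emph{degree} $d$ realizing torsion of order $n$, whereas $f(n)$ is only a divisor of such $d$, so no lower bound on $f(n)$ follows from it (a bound $f(n) \gg n/(\log\log n)^2$ does hold, but it comes from the multiplicative structure of the forced divisor, cf.\ Lemma \ref{lem:lambdadescription}(ii), not from the maximal-order theorem). Second, and fatally, a bound of the shape $f(n) \gg n/(\log\log n)^{O(1)}$ cannot yield convergence: after the parity reduction, the dominant odd moduli are indexed by prime orders $n=\ell$ (and $2\ell$) with $\ell \equiv 3 \pmod 4$ --- this is exactly what the odd-degree classification of \cite{BCS15} (Proposition \ref{prop:oddthm}) says survives --- and $\sum_{\ell} (\log\log \ell)^2/\ell$ diverges over primes, by the very same observation you make about the full series. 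So linear growth of $f$, even restricted to the odd part, is not enough to close the sieve; with only the inputs you allow yourself, the odd part of the union could a priori still cover all odd integers.

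The missing arithmetic ingredient is the class number. The paper shows that the odd forced divisor attached to a point of order $\ell$ is $\frac{\ell-1}{2}\cdot h_{\Q(\sqrt{-\ell})}$ (this identification, via Propositions \ref{prop:realcyclo}, \ref{prop:oddthm}, \ref{prop:nonolson} and Lemma \ref{lem:div}, is the hard algebraic content you defer in your final paragraph), and convergence of $\sum_{\ell} 1/\bigl(\frac{\ell-1}{2} h_{\Q(\sqrt{-\ell})}\bigr)$ is then proved using a lower bound on the class number: Siegel's theorem ($h_{\Q(\sqrt{-\ell})} \gg_{\epsilon} \ell^{1/2-\epsilon}$), or, effectively, Goldfeld--Gross--Zagier ($h_{\Q(\sqrt{-\ell})} \gg_\epsilon (\log \ell)^{1-\epsilon}$, which suffices after partial summation against the prime number theorem since the sum runs over primes only). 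Alternatively, as the paper remarks, one can avoid class-number lower bounds altogether by an Erd\H{o}s--Wagstaff density argument (Proposition \ref{lem:EW}), using that $\ell - 1$ divides the forced divisor. Your proposal contains neither ingredient, and without one of them the decisive summability claim is not merely unproved but unprovable from the stated inputs.
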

We also extend \cite[Theorem 1.4]{BCS15} in the following complementary direction.
\begin{thm}\label{thm:primepower}  For all $n \in \Z^+$, there is a $P = P(n)$
such that for all primes $p \geq P$,  the number $p^n$ is an Olson degree.
\end{thm}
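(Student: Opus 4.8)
\textbf{The plan} is to argue by contradiction. Fix $n$ and suppose that for arbitrarily large primes $p$ there is a $\CM$ elliptic curve $E$ over a field $F$ with $[F:\Q]=p^n$ and $E(F)[\tors]$ a non-Olson group. First I reduce the shape of the torsion. Writing $E(F)[\tors]\cong \Z/a\Z\times\Z/b\Z$ with $a\mid b$, the Weil pairing forces $\Q(\zeta_a)\subseteq F$, so $\varphi(a)\mid p^n$; since $p$ is odd this forces $\varphi(a)=1$, i.e. $a\in\{1,2\}$, so the torsion is cyclic or of the form $\Z/2\Z\times\Z/b\Z$. A non-Olson group of this shape either contains a point of prime order $\ell\ge 5$, or is $\{2,3\}$-primary (one of finitely many minimal groups $\Z/8\Z$, $\Z/9\Z$, $\Z/12\Z$, $\Z/2\Z\times\Z/4\Z$, $\Z/2\Z\times\Z/6\Z$, $\Z/3\Z\times\Z/3\Z$). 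Using the divisibility result (that $[F:\Q]$ is divisible by a function of $\#E(F)[\tors]$) together with the even/odd dichotomy of \cite{BCS15} (e.g. a point of order $5$ forces even degree), I would check that each of these $\{2,3\}$-primary groups forces even degree and hence cannot occur over the odd-degree field $F$. This funnels everything into the case that $E(F)$ contains a point $P$ of prime order $\ell\ge5$.

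Next I would use that $[F:\Q]=p^n$ is odd, so $K\not\subseteq F$ and some $\sigma\in G_F$ restricts to complex conjugation on $K$. If $\ell\nmid \ff\Delta_K$, then $E[\ell]\cong\O/\ell\O$ as an $\O$-module and $\sigma$ acts $\O$-semilinearly: in the split case $\sigma$ interchanges the two eigenlines, and in the inert case $\langle P\rangle$ is an $\F_\ell$-line inside the field $\O/\ell\O\cong\F_{\ell^2}$; in both cases fixing $P\ne O$ forces the image of $G_{FK}$ in $(\O/\ell\O)^\times$ to be trivial, i.e. $E[\ell]\subseteq E(FK)$. Thus $\#\mathrm{im}\bigl(G_{FK}\to(\O/\ell\O)^\times\bigr)$, which is $(\ell-1)^2$ or $\ell^2-1$ up to an index dividing $w_K\le 6$, divides $[FK:K]=p^n$. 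Both $\tfrac{(\ell-1)^2}{w}$ and $\tfrac{\ell^2-1}{w}$ are even (since $4\mid(\ell-1)^2$, $8\mid\ell^2-1$, $w\le6$), contradicting that $p^n$ is odd, except in the split case with $w_K=4$, i.e. $K=\Q(i)$, where triviality of the image yields $\bigl(\tfrac{\ell-1}{2}\bigr)^2=p^{2k}$ and hence $\ell=2p^k+1\equiv3\pmod4$, incompatible with $\ell$ splitting in $\Q(i)$. Hence $\ell$ ramifies in $K$, i.e. $\ell\mid\Delta_K$ (the option $\ell\mid\ff$ is excluded, as it makes $h(\O)$ even, or divisible by $\ell=p$, which is then handled by the coprimality $\gcd(\ell-1,p^n)=1$).

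Now $\langle P\rangle=\ker\mathfrak{l}$ for the ramified prime $\mathfrak{l}\mid\ell$, and $G_{\Q(j(E))}$ acts on it through a character $\chi\colon G_{\Q(j(E))}\to(\O_K/\mathfrak{l})^\times\cong\F_\ell^\times$. Since $\Q(j(E))\subseteq F$ with $[\Q(j(E)):\Q]=h(\O)$, rationality of $P$ gives $h(\O)\mid p^n$ and $\ord(\chi)\mid[F:\Q(j(E))]$, so both $h(\O)$ and $\ord(\chi)$ are powers of $p$. Because an odd prime $\ge5$ ramifies in $K$ only when $w_K=2$, the index $(\ell-1)/\ord(\chi)$ divides $2$; as $\ord(\chi)$ is a power of the odd prime $p$ while $\ell-1$ is even, necessarily $\ord(\chi)=\tfrac{\ell-1}{2}=p^k$, so $\ell=2p^k+1$. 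Finally, $h(\O)$ is an odd prime power, so genus theory forces $\Delta_K$ to have a single prime factor; thus $\Delta_K=-\ell$ and $h_K=h(\Q(\sqrt{-\ell}))$ is a power of $p$.

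The endgame is then analytic. Primality of $\ell=2p^k+1$ forces $k$ odd: if $k$ were even then $p^k\equiv1\pmod3$ (as $p\ne3$ for $p$ large), whence $3\mid\ell$. For odd $k$ the class number formula gives $h_K\asymp\tfrac{\sqrt{\ell}}{\pi}L(1,\chi_{-\ell})\asymp p^{k/2}L(1,\chi_{-\ell})$ with $\tfrac{1}{\log\ell}\ll L(1,\chi_{-\ell})\ll\log\ell$; but $h_K=p^{j}$ with $j\in\Z$ and $k$ odd gives $|j-\tfrac k2|\ge\tfrac12$, so $\bigl|\log(h_K/p^{k/2})\bigr|=|j-\tfrac k2|\log p\ge\tfrac12\log p$, which cannot be matched by $\log L(1,\chi_{-\ell})\ll\log\log\ell\ll\log\log p$ once $p$ is large (for fixed $n$ we have $k\le n$ bounded). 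This contradiction shows that for each $n$ only finitely many primes $p$ make $p^n$ fail to be Olson, which produces $P(n)$. \textbf{The main obstacle} is the reduction in the first two paragraphs: making airtight, from the prior algebraic results, both the exclusion of the $\{2,3\}$-primary non-Olson groups in odd degree and the claim that a point of prime order $\ell\ge5$ in odd degree forces $\ell\mid\Delta_K$; once the problem is concentrated on a single point of order $\ell=2p^k+1$ with $K=\Q(\sqrt{-\ell})$, the number-theoretic finish is comparatively clean.
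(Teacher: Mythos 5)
Your plan has the same endgame as the paper's proof — reduce to a prime $\ell$ with $\frac{\ell-1}{2}=p^k$, $K=\Q(\sqrt{-\ell})$, and $h_{\Q(\sqrt{-\ell})}$ a power of $p$, then play a class number lower bound against the elementary upper bound $h_{\Q(\sqrt{-\ell})}\le \ell^{1/2}\log\ell$ and a congruence modulo $3$ — but the paper gets the reduction by citing its Theorem \ref{OLSONDEGREETHM}, whereas you re-derive it from scratch, and two steps of your re-derivation fail as stated. First, it is \emph{false} that every $\{2,3\}$-primary non-Olson group of torsion rank $\le 2$ forces even degree: by Proposition \ref{prop:oddthm} (i.e.\ \cite[Theorem 7.1]{BCS15}), the groups $\Z/9\Z$ and $\Z/18\Z$ (the cases $\ell=3$ of $\Z/\ell^n\Z$, $\ell\equiv 3 \pmod 8$, and $\Z/2\ell^n\Z$, $\ell \equiv 3\pmod 4$) do occur as CM torsion subgroups in odd degree. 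This is exactly why the paper's proof of Theorem \ref{OLSONDEGREETHM} has a separate $\ell=3$ case: a point of order $9$ forces $\Q(\zeta_9)\subseteq FK$, hence $3\mid d$, which rules out $d=p^n$ only because $p$ is large (in particular $p\ne 3$) — not because $d$ would be even. Your proposed ``check'' would therefore fail for precisely these groups, and an additional argument must be inserted. Second, your ramified case is not airtight: you silently assume $P\in\ker\mathfrak{l}$ (if $P\notin\ker\mathfrak{l}$, it generates $E[\ell]$ as an $\O$-module and a separate argument is needed), and the key divisibility is extracted from a character of $G_{\Q(j(E))}$ with $\ord(\chi)\mid [F:\Q(j(E))]$ — but $E$ need not be defined over $\Q(j(E))$, and the action of $G_F$ on $\langle P\rangle$ is \emph{trivial} by rationality, so this character bookkeeping does not parse as written. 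The clean statement you need is exactly Lemma \ref{lem:div}(iv): $h_K\,\ell^{a+b-1}(\ell-1)\mid w_K\frac{[FK:\Q]}{2}$, which (since $w_K=2$ when $\ell\ge 5$ ramifies) gives $h_K\cdot\frac{\ell-1}{2}\mid p^n$ directly.

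The second genuine gap is analytic: you assert $\frac{1}{\log\ell}\ll L(1,\chi_{-\ell})$. This is not a theorem; it is precisely the assertion that $\chi_{-\ell}$ has no Siegel zero, which is open. What is actually available is Siegel's ineffective bound $L(1,\chi_{-\ell})\gg_{\epsilon}\ell^{-\epsilon}$, and it does suffice here because $k\le n$ is bounded: taking $\epsilon=\epsilon(n)$ small enough (say $\frac{1}{4n}$), one gets $\left|\log L(1,\chi_{-\ell})\right| < \frac{1}{2}\log p - O(1)$ for $p$ large in terms of $n$, which is the contradiction you want, at the cost of the same ineffectivity in $P(n)$ that the paper has. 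This is exactly how the paper runs the estimate, phrased there as $\ell^{1/2-1/(3n)} < h_{\Q(\sqrt{-\ell})} < \ell^{1/2+1/3}$-type inequalities forcing $s=r/2$, hence $r$ even, hence $3\mid \ell=2p^r+1$ (you run the same two facts in the opposite order, forcing $k$ odd first). With these repairs your argument would be correct, but it amounts to re-proving the relevant parts of Theorem \ref{OLSONDEGREETHM} and of \cite{BCS15} rather than invoking them; the paper's modular route through the classification of Olson degrees is both shorter and reusable.
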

Finally we consider the distribution of groups $G$ that realize the maximality of $T_{\rm CM}(d)$. Say that the finite abelian group $G$ is a \emph{maximal torsion subgroup in degree $d$} if $\#G = T_{\rm CM}(d)$ and there is a CM elliptic curve $E$ over a degree $d$ number field $F$ with $E(F)[{\rm tors}] \cong G$. From the maximal order result in \cite{CP15}, each maximal torsion subgroup  $G$ in degree $d \le x$ has size $O(x\log\log{x})$. In view of Lemma \ref{lem:tau0mean} below, this leaves us with $\asymp x\log\log{x}$ possibilities for $G$.  The next result describes how many such groups actually occur.

\begin{thm}\label{thm:distinctgroups} For $d \in \Z^+$, let $\M(d)$
be the set of isomorphism classes of groups $G$ such that $\#G = T_{\CM}(d)$
and $G \cong E(F)$ for a CM elliptic curve $E$ defined over a degree $d$
number field $F$.  Then
\[ \# \bigcup_{d \leq x} \M(d) = x/(\log{x})^{1+o(1)}. \]
\end{thm}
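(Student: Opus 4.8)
The plan is to trade the enumeration of groups for an enumeration of the degrees that force them, and then to recognize the latter as essentially a count of totients, for which $x/(\log x)^{1+o(1)}$ is exactly the Erd\H{o}s--Ford estimate. For a finite abelian group $G$ that occurs as the torsion subgroup of some CM elliptic curve, write $m(G)$ for the least degree of a number field over which it so occurs. I would first combine the divisibility result announced in the abstract (that $[F:\Q]$ is divisible by a prescribed function of $\#E(F)[\tors]$, sharpening \cite{BCS15}) with the base-change statement \cite[Theorem 2.1(a)]{BCS15} to obtain that $G$ occurs in degree $d$ if and only if $m(G)\mid d$; that is, the realization degrees of $G$ are exactly the multiples of $m(G)$. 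I also record, from the minimal-degree formula of \cite{BCS15}, that $m(\Z/N\Z)\asymp\varphi(N)$, and that realizing a rank-two group forces the full $a$-torsion for its first invariant factor $a$ and hence inflates the modulus by a factor growing with $a$.

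The upper bound rests on an injectivity observation. I claim the assignment $G\mapsto m(G)$ is injective up to bounded multiplicity on $\bigcup_{d\le x}\M(d)$: if $G_1\in\M(d_1)$ and $G_2$ is any realizable group with $\#G_2>\#G_1$ and $m(G_2)\mid d_1$, then $G_2$ occurs in degree $d_1$, forcing $T_{\CM}(d_1)\ge\#G_2>\#G_1=T_{\CM}(d_1)$, which is absurd; so a given modulus value is attained by maximal groups of only one (maximal) order, and by the explicit form of $m$ by only boundedly many groups of that order. Since $G\in\M(d)$ with $d\le x$ gives $m(G)\mid d\le x$, this yields
\[ \#\bigcup_{d\le x}\M(d)\ \ll\ \#\{\, m(G) : G\text{ realizable},\ m(G)\le x\,\}. \]
Here the cyclic groups contribute moduli comparable to totients while the rank-two groups contribute a sparser family of larger moduli, so the right-hand set has the same order of magnitude as the set of totients up to $x$. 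Invoking the Erd\H{o}s--Ford bound that the totients up to $x$ number $x/(\log x)^{1+o(1)}$ gives $\#\bigcup_{d\le x}\M(d)\le x/(\log x)^{1+o(1)}$.

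For the matching lower bound the cyclic groups already suffice, and I would exhibit a family of $x/(\log x)^{1+o(1)}$ integers $N$ each realizing $T_{\CM}(m(\Z/N\Z))=N$ with $m(\Z/N\Z)\le x$. Following the construction behind the lower bound of Theorem \ref{thm:avg0}(i), I would build $N$ out of primes from a prescribed range so that $m(\Z/N\Z)\asymp\varphi(N)$ sweeps out a large subfamily of totients and so that $N$ is \emph{rigid}, meaning that no realizable group of order exceeding $N$ has modulus dividing $m(\Z/N\Z)$. Rigidity gives $T_{\CM}(m(\Z/N\Z))=N$, so distinct $N$ produce distinct members of $\bigcup_{d\le x}\M(d)$. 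The principal obstacle is precisely the verification of rigidity for enough $N$: one must preclude ``blocking'' by some larger $N'$ whose smaller modulus happens to divide $m(\Z/N\Z)$, an inversion reflecting the erratic behaviour of $\varphi^{-1}$. I expect to control this with the anatomical input of Ford's work on totients---normal bounds on $\Omega(\varphi(N))$ and on the largest prime factors, together with upper bounds for the number of $\varphi$-preimages---which simultaneously guarantees that the surviving rigid family still has size $x/(\log x)^{1+o(1)}$. Combining the two bounds yields $\#\bigcup_{d\le x}\M(d)=x/(\log x)^{1+o(1)}$.
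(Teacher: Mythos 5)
Your reduction rests on two structural claims that are neither justified nor, in fact, true. First, the claim that the realization degrees of a group $G$ are \emph{exactly} the multiples of a single integer $m(G)$: base change (\cite[Theorem 2.1(a)]{BCS15}) gives only one direction, namely that every multiple of a realization degree is again a realization degree. The set of realization degrees is a set of multiples in the paper's sense, but nothing makes it generated by a \emph{single} element, because different CM orders produce incomparable degrees --- compare the split-type degrees involving $h_K(\ell-1)$, the inert-type degrees involving $h_K(\ell^2-1)$, and the ramified-type degrees $\frac{\ell-1}{2}h_{\Q(\sqrt{-\ell})}$ occurring in Lemma \ref{lem:div} and Propositions \ref{prop:CCS1A}, \ref{prop:nonolson}. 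Without the ``only if'' direction your injectivity argument collapses: from $G_1\in\M(d_1)$ you know only $m(G_1)\le d_1$, not $m(G_1)\mid d_1$, so the hypothesis $m(G_2)\mid d_1$ needed for your contradiction is never available. Second, the claim $m(\Z/N\Z)\asymp\varphi(N)$ is provably false: fix $C$, let $K_1,\dots,K_k$ be the finitely many imaginary quadratic fields with $h_{K_i}\le 6C$, and note that by Chebotarev (the Frobenius class of complex conjugation in the compositum) a positive density of primes $\ell$ is inert in every $K_i$; for such $\ell$, Lemma \ref{lem:div} (applied over $FK$) forces every degree in which a point of order $\ell$ appears to exceed $C(\ell-1)$ once $\ell$ is large, since split realizations now require $h_K>6C$, inert realizations carry a factor $\ell+1$, and ramified realizations carry $h_{\Q(\sqrt{-\ell})}\gg \ell^{1/2-o(1)}$. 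Thus the set $\{m(G)\}$ genuinely involves class numbers and $(\ell+1)$-factors; it is not a bounded-multiplicity copy of the totients, and the Erd\H{o}s--Ford count of $\varphi$-values cannot be invoked off the shelf. The sparsity you need at this point is exactly what the paper proves as \eqref{eq:TCM2dlarge}, via Theorem \ref{lem:div2} and the Case I--IV anatomy of the sets $\Lambda(n)$; that work cannot be outsourced to the totient literature.

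Your lower bound has the same difficulty in a form you yourself flag: ``rigidity'' is precisely the hard step, and it is left unproved; moreover the tool you hope will supply it is not the right one. The paper's resolution restricts to prime orders, taking $d=\frac{\ell-1}{3}$ for primes $\ell\equiv 1\pmod{3}$ in $(x/2,x]$ (Proposition \ref{prop:CCS1A}), and rules out blocking using Lemma \ref{lem:div}, Theorem \ref{lem:div2} together with Lemma \ref{lem:lambdadescription}(iii) and Lemma \ref{lem:HRHT}(ii), plus --- crucially --- Proposition \ref{prop:LPMP} of Luca--Pizarro-Madariaga--Pomerance, which bounds the number of primes $\ell\le x$ such that $4(\ell-1)$ has a divisor $p-1$ with $p$ a large prime. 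This yields the stronger statement that $T_{\CM}(d)$ takes $\gg x/\log x$ distinct values on $d\le x$. For the upper bound the paper avoids minimal degrees entirely: abelian groups of order at most $x/\log x$ and torsion rank at most $2$ number $O(x/\log{x})$ by Lemma \ref{lem:tau0mean}, and the remaining maximal groups are attached to the sparse set of $d$ with $T'_{\CM}(2d)>x/\log x$ counted by \eqref{eq:TCM2dlarge}, finished off with H\"older's inequality and the mean value of $\tau'(n)^{\alpha}$. If you repair your outline by replacing the single-generator claim with the true divisibility structure and by supplying the shifted-prime input for rigidity, you will essentially have reconstructed the paper's proof.
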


\subsection{Algebraic results}\noindent In order to prove the results of the last section we need to sharpen and extend some of the algebraic results of \cite{CCS13} and \cite{BCS15}.

The prototypical result that gives leverage on torsion in the CM case
is the following theorem of Silverberg and Prasad-Yogananda \cite{silverberg88,PY01}: if $E_{/F}$
is an $\O$-CM elliptic curve defined over a number field $F$ admitting
an $F$-rational point of order $N$, then
\[ \varphi(N) \leq \# \O^{\times} [F:\Q]. \]
Moreover, if $F \supset K$ then
\[ 2\varphi(N) \leq \# \O^{\times} [F:\Q], \]
whereas if $F \not\supset K$ then
\[ \varphi(\# E(F)[\tors]) \leq \# \O^{\times} [F:\Q].\]
We call these inequalities the \emph{SPY bounds}.  They were
refined when $N$ is prime in \cite{CCS13} and \cite{BCS15} by separate consideration of the cases in which $N$ is split, inert or ramified in the CM field $K$.  Moreover, at least in the case of
CM by the maximal order, classical theory gives a tight relationship between
$F$-rational torsion and the containment in $F$ of ray class fields of $K$.
The following result systematically relates SPY-type bounds, for prime powers $N$, to ray class containments.

\begin{thm}
\label{BIGSPY}
Let $F$ be a degree $d$ number field containing an imaginary quadratic field $K$. Let $E_{/F}$ be an elliptic curve with $\O$-CM, where $\O$ is the order in $K$ of discriminant $\Delta$. Suppose $E(F)[{\ell^{\infty}}] \cong \Z/\ell^{a}\Z \times \Z/\ell^{b}\Z$, where $b \ge a \ge 0$ and $b\ge 1$. Then:
\begin{enumerate}
\item If $\leg{\Delta}{\ell}=-1$, then $a=b$, and $\ell^{2b-2} (\ell^2-1) \mid w_K \cdot [F \cap K^{(\ell^{b} \O_K)}: K^{(\O_K)}]$.
\item If $\leg{\Delta}{\ell}=1$ and $a=0$, then $\ell^{b-1}(\ell-1) \mid w_K \cdot [F \cap K^{(\ell^{b} \O_K)}: K^{(\O_K)}]$.
\item If $\leg{\Delta}{\ell}=1$ and $a \geq 1$, then $\ell^{a+b-2}(\ell-1)^2 \mid w_K \cdot [F \cap K^{(\ell^{b} \O_K)}: K^{(\O_K)}]$.
\item If $\leg{\Delta}{\ell}=0$ and $\ell$ ramifies in $K$, then $\ell^{a+b-1}(\ell-1) \mid w_K \cdot [F \cap K^{(\ell^{b} \O_K)}: K^{(\O_K)}]$.
\item If $\leg{\Delta}{\ell}=0$ and $\ell$ is unramified in $K$, then \newline$\ell^{\max\{a+b-2,0\}}(\ell-1)(\ell-\leg{\Delta_K}{\ell}) \mid w_K \cdot [F \cap K^{(\ell^{b} \O_K)}: K^{(\O_K)}]$.
\end{enumerate}
\end{thm}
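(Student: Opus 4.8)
The plan is to translate the hypothesis on $E(F)[\ell^{\infty}]$ into a constraint on the image of the mod-$\ell^{b}$ Galois representation and then read off the ray class containment from the main theorem of complex multiplication. Throughout write $\mathfrak{m} = \ell^{b}\Ok$ and $H = K^{(\Ok)}$ for the Hilbert class field. Since $j(E) \in F$ generates the ring class field of conductor $\mathfrak{f}$ over $K$, which contains $H$, we have $H \subseteq F$; as $K^{(\mathfrak{m})}/H$ is abelian this gives $[K^{(\mathfrak{m})} : F \cap K^{(\mathfrak{m})}] = [FK^{(\mathfrak{m})} : F]$, and combined with the class field theory isomorphism $\mathrm{Gal}(K^{(\mathfrak{m})}/H) \cong (\Ok/\ell^{b})^{\times}/\overline{\Ok^{\times}}$ (where $\overline{\Ok^{\times}}$ is the image of the units) yields the master formula
\[ w_K \cdot [F \cap K^{(\mathfrak{m})} : H] = \frac{w_K}{|\overline{\Ok^{\times}}|} \cdot \frac{\#(\Ok/\ell^{b})^{\times}}{[FK^{(\mathfrak{m})} : F]}. \]
Because the prefactor $w_K/|\overline{\Ok^{\times}}|$ is a positive integer, it suffices to prove in each case that the target quantity $Q_{\ell}$ satisfies $Q_{\ell}\cdot [FK^{(\mathfrak{m})} : F] \mid \#(\Ok/\ell^{b})^{\times}$.

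Next I would control $[FK^{(\mathfrak{m})} : F]$ by torsion. Invoking the main theorem of CM in the form $K^{(\mathfrak{m})} = H(h(E'[\mathfrak{m}]))$ for an $\Ok$-CM curve $E'$ isogenous to $E$ (with $h$ a Weber function) and using $H \subseteq F$ gives $FK^{(\mathfrak{m})} = F(h(E'[\mathfrak{m}]))$. Comparing $E$ and $E'$ through the connecting isogeny --- which is prime to $\ell$ exactly when $\ell \nmid \mathfrak{f}$ --- identifies this with $F(h(E[\ell^{b}]))$, a subfield of the full torsion field $F(E[\ell^{b}])$ whose relative degree divides $w_K$. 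Thus $[FK^{(\mathfrak{m})} : F]$ divides $[F(E[\ell^{b}]) : F] = \#\,\mathrm{im}\,\rho$, where $\rho \colon \mathrm{Gal}(\overline{F}/F) \ra \mathrm{Aut}_{\O}(E[\ell^{b}])$ is the $\O$-linear representation (the CM being defined over $F \supseteq K$).

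The heart of the argument is then the case analysis on $\mathrm{im}\,\rho$. Since $\rho$ is $\O$-linear, $E(F)[\ell^{b}] = (E[\ell^{b}])^{\mathrm{im}\,\rho}$ is an $\O$-submodule, and the hypothesis $E(F)[\ell^{\infty}] \cong \Z/\ell^{a}\Z \times \Z/\ell^{b}\Z$ pins down which submodule it is; demanding that $\mathrm{im}\,\rho$ fix this submodule forces $\mathrm{im}\,\rho$ into an explicit subgroup of $\mathrm{Aut}_{\O}(E[\ell^{b}])$, which for $\ell \nmid \mathfrak{f}$ is the Cartan group $(\O/\ell^{b})^{\times}$. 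For $\leg{\Delta}{\ell}=-1$ the module $E[\ell^{b}]$ is free of rank one over the unramified quadratic ring $\O/\ell^{b}$, so every proper $\O$-submodule has the shape $(\Z/\ell^{m}\Z)^{2}$; this both explains why $a=b$ and forces $\mathrm{im}\,\rho$ to be trivial, whence $[FK^{(\mathfrak{m})}:F]=1$ and the divisibility is in fact an equality. In the split and ramified maximal-order cases one checks, from the requirement that $\rho$ fix $\Z/\ell^{a}\Z \times \Z/\ell^{b}\Z$, that $\#\,\mathrm{im}\,\rho$ divides $\ell^{b-a}$ (or $\ell^{b-1}(\ell-1)$ when $a=0$), which upon substitution into the master formula produces exactly the exponents in (1)--(4).

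The main obstacle will be case (5), where $\ell \mid \mathfrak{f}$ and the local order $\O_{\ell}$ is non-maximal: here $E[\ell^{b}]$ is no longer free over $\O/\ell^{b}$, the Weber-function comparison between $E$ and the $\Ok$-CM curve $E'$ runs through an $\ell$-power isogeny, and the splitting type of $\ell$ in $K$ (encoded by $\leg{\Delta_K}{\ell}$) re-enters through the structure of $\Ok/\ell^{b}$ even though $\O$ is non-maximal. I would handle this by passing to $E'$ explicitly, tracking the kernel of the isogeny on $\ell^{b}$-torsion, and recomputing the fixed submodule over $\O_{\ell}$; the factor $\ell - \leg{\Delta_K}{\ell}$ and the truncation $\max\{a+b-2,0\}$ should emerge from this local computation. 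Finally, the small-$\ell$ and small-exponent boundary cases --- precisely where $|\overline{\Ok^{\times}}| < w_K$ or where $K = \Q(i), \Q(\sqrt{-3})$ contribute extra automorphisms --- are exactly where the $w_K$ on the right-hand side is needed to preserve the divisibility, and these I would verify directly.
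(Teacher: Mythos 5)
Your overall architecture coincides with the paper's proof: your ``master formula'' is precisely the paper's manipulation of $[FK^{(\ell^b \O_K)}:F]=[K^{(\ell^b \O_K)}:F\cap K^{(\ell^b \O_K)}]$ combined with the class field theory computation $[K^{(\ell^b \O_K)}:K^{(\O_K)}]=\Phi(\ell^b)/[U:U_{\ell^b}]$, and your case analysis of the image of the $\O$-linear Galois representation is exactly the content of the two results the paper imports from \cite{CP15}, namely Lemmas \ref{lem:rayclass} and \ref{lem:squaring}: the containment $K^{(\ell^b\O_K)} \subseteq F(E[\ell^b])$ and the bound $[F(E[\ell^b]):F] \mid \ell^{b-a}$ (resp.\ $\ell^{b-1}(\ell-1)$ in the split case with $a=0$). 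For cases (1)--(3), where $\leg{\Delta}{\ell}\neq 0$ forces $\ell \nmid \ff$, your Cartan-image computation is correct and complete in outline: in the inert case the $\O$-submodule structure of $\O/\ell^b\O$ gives $a=b$ and trivial image, and in the split case the pointwise stabilizer of a submodule isomorphic to $\Z/\ell^a\Z \times \Z/\ell^b\Z$ has order dividing $\ell^{b-a}$, resp.\ $\ell^{b-1}(\ell-1)$.

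The genuine gap is the non-maximal situation $\ell \mid \ff$, which comprises all of case (5) and also part of case (4): nothing in ``$\leg{\Delta}{\ell}=0$ and $\ell$ ramifies in $K$'' prevents $\ell \mid \ff$, so your phrase ``ramified maximal-order case'' silently excludes a subcase you must still handle. When $\ell \mid \ff$, two assertions your argument rests on are stated but not proved: (i) the containment $FK^{(\ell^b\O_K)} \subseteq F(E[\ell^b])$ --- your Weber-function comparison breaks down here by your own admission, since the isogeny $E \to E'$ has degree divisible by $\ell$ and so $E'[\ell^b]$ is not simply the image of $E[\ell^b]$ --- and (ii) the divisibility $[F(E[\ell^b]):F] \mid \ell^{b-a}$, where now $\mathrm{Aut}_{\O}(E[\ell^b])$ is strictly larger than $(\O/\ell^b\O)^{\times}$ because $E[\ell^b]$ is not free over $\O/\ell^b\O$. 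These are exactly Theorems 5 and 6 of \cite{CP15}, and they carry the real content of case (5): the factor $(\ell-1)(\ell-\leg{\Delta_K}{\ell})$ enters through the ray class degree $\Phi(\ell^b) = \#(\O_K/\ell^b\O_K)^{\times}$ computed for the \emph{maximal} order, and the truncation $\max\{a+b-2,0\}$ comes from cancelling a factor bounded by $\ell^{\min\{b-a,\,2b-2\}}$, both of which require (i) and (ii) as inputs. Your stated plan (``passing to $E'$ explicitly, tracking the kernel of the isogeny\dots'') is the right idea in principle --- it is essentially how those theorems of \cite{CP15} are proved --- but in the proposal it remains a declaration of intent rather than an argument, so the proof is incomplete precisely in its most delicate case.
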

\noindent
These divisibility results combine in a natural way if one wants to consider the full group of $F$-rational torsion (see Theorem \ref{lem:div2}).

The other main algebraic result is a complete determination of all Olson degrees.  Recall that a set of $\Aa$ of positive integers is called a \emph{set of multiples} if whenever $a \in \Aa$, every multiple of $\Aa$ is also in $\Aa$. This is easily seen to be equivalent to requiring that $\Aa = M(\Gg)$ for some set of positive integers $\Gg$, where
\[ M(\Gg) = \{n \in \Z^{+} : g\mid n \text{ for some $g\in \Gg$}\}. \]
We call $\Gg$ a set of \emph{generators} for $\Aa$.

\begin{table}
\begin{tabular}{c|c}
$N$ & \# Olson degrees in $[1,N]$\\
\midrule\midrule
\num{1000} & \num{265} \\
\num{10000} & \num{2649} \\
\num{100000} & \num{26474} \\
\num{1000000} & \num{264633} \\
\num{10000000} & \num{2646355} \\
\num{100000000} & \num{26462845} \\
\num{1000000000} & \num{264625698} \\
\num{10000000000} & \num{2646246218} \\
\num{100000000000} & \num{26462418808}
\end{tabular}
\vskip 0.07in
\caption{Counts of Olson degrees \label{tbl:olsoncounts} to $10^{11}$.}
\end{table}

\begin{thm}
\label{OLSONDEGREETHM}
The set of non-Olson degrees can be written as $M(\Gg)$, where \[ \Gg = \{2\} \cup \left\{\frac{\ell-1}{2} \cdot h_{\Q(\sqrt{-\ell})} \mid \ell \equiv 3\pmod{4},~\ell > 3\right\}. \]
\end{thm}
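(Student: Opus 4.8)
The plan is to show that the non-Olson degrees form a \emph{set of multiples} and then identify its generators as $\Gg$. Since every Olson group occurs over $\Q$ and, by \cite[Theorem 2.1(a)]{BCS15}, any torsion configuration realized in degree $e$ is realized in every multiple of $e$, the non-Olson degrees are closed under passing to multiples; hence they equal $M(\Gg_0)$ for some generating set $\Gg_0$, and it suffices to prove the two inclusions $M(\Gg)\subseteq\{\text{non-Olson}\}$ and $\{\text{non-Olson}\}\subseteq M(\Gg)$.

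For $M(\Gg)\subseteq\{\text{non-Olson}\}$ I would treat the generators separately and then invoke closure. For $g=2$, the equality $T_{\CM}(2)=12>6$ already exhibits a non-Olson torsion subgroup in degree $2$. For $g=\tfrac{\ell-1}{2}h_{\Q(\sqrt{-\ell})}$ with $\ell\equiv 3\pmod 4$ and $\ell>3$, set $K=\Q(\sqrt{-\ell})$, so that $\ell$ ramifies, $\ell\O_K=\pp^2$, and $w_K=2$. Taking an $\O_K$-CM curve $E$ with $j(E)$ in the degree-$h_K$ real subfield of $H=K^{(\O_K)}$, the group $E[\pp]$ is the unique $\O_K$-stable subgroup of order $\ell$; since $\overline{\pp}=\pp$ it is stable under complex conjugation $c$, so after a possible quadratic twist a generator of $E[\pp]$ is rational over the fixed field $L$ of $c$ in $K^{(\pp)}$. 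Computing $[K^{(\pp)}:H]=\#(\O_K/\pp)^{\times}/w_K=\tfrac{\ell-1}{2}$ gives $[L:\Q]=\tfrac{\ell-1}{2}h_K$, and as $\ell\ge 7$ the group $\Z/\ell\Z$ is not an Olson group. Closure under multiples then yields the inclusion.

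For $\{\text{non-Olson}\}\subseteq M(\Gg)$, let $d$ be non-Olson, witnessed by $E_{/F}$ with $[F:\Q]=d$ and $E(F)[\tors]$ non-Olson. If $d$ is even then $d\in M(\{2\})$, so assume $d$ is odd; then $K\not\subseteq F$ (otherwise $2\mid d$). The odd-order part of $E(F)[\tors]$ is then cyclic, and I would pass to $F'=FK$ (of degree $2d$, with $H\subseteq F'$, so $h_K\mid d$) and apply Theorem \ref{BIGSPY} prime by prime. The key local fact is that for an \emph{odd} prime $\ell$ that is split or inert, an $F$-rational point of order $\ell$ forces the Cartan image of $\mathrm{Gal}(\overline{F}/F')$ to fix a nonzero vector and hence to be trivial, so all of $E[\ell]$ becomes $F'$-rational; the factor $(\ell-1)^2$ (split) or $\ell^2-1$ (inert) in Theorem \ref{BIGSPY} then forces $[F':K^{(\O_K)}]=d/h_K$ to be even, contradicting that $d$ is odd. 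Thus every odd prime dividing the order is ramified in $K$, and a ramified $\ell\ge 5$ produces, via the $c$-stable subgroup $E[\pp]\subseteq E(F')$, the containment $K^{(\pp)}\subseteq F'$ and hence $\tfrac{\ell-1}{2}h_K\mid d$. Since $d$ is odd, both $\tfrac{\ell-1}{2}$ and $h_K$ are odd, forcing $\ell\equiv 3\pmod 4$ and, by genus theory, $\Delta_K=-\ell$, i.e. $K=\Q(\sqrt{-\ell})$ and $h_K=h_{\Q(\sqrt{-\ell})}$; this places $d$ in $M(\Gg)$.

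The hard part will be the remaining case in which the non-Olson-ness of $E(F)[\tors]$ is carried entirely by the primes $2$ and $3$: one must show that no CM elliptic curve over an odd-degree field admits a point of order $8$, $9$, or $12$, nor full torsion $\Z/2\Z\times\Z/4\Z$. For most CM fields the divisibility output of Theorem \ref{BIGSPY} again forces $d/h_K$ even, but the argument degenerates precisely for $K=\Q(i)$ and $K=\Q(\sqrt{-3})$, where $w_K>2$ weakens the bound; there I would instead invoke the combined full-torsion statement (Theorem \ref{lem:div2}) together with an explicit computation of the relevant ray class field degrees, using that $\ell=2$ behaves specially because $\mathrm{GL}_2(\F_2)$ is too small to force triviality from a single fixed vector. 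A final routine reduction is from an arbitrary order $\O$ to the maximal order $\O_K$: passing to $\O_K$ only shrinks the pertinent ray class degrees and ring class numbers dominate $h_K$, so non-maximal CM can only enlarge $d$, and the extremal generators arise from $\O_K$. Assembling these steps shows every odd non-Olson degree is divisible by some $\tfrac{\ell-1}{2}h_{\Q(\sqrt{-\ell})}$, completing the identification $M(\Gg)=\{\text{non-Olson}\}$.
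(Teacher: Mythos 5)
Your overall frame (non-Olson degrees form a set of multiples; prove two inclusions) matches the paper, and your first inclusion is fine in outline: it is essentially a sketch of the construction behind Proposition \ref{prop:nonolson}, which the paper simply cites. The converse inclusion, however, has two fatal gaps. First, your ``key local fact'' for split primes is false: in a split Cartan $(\O/\ell\O)^{\times} \cong \F_{\ell}^{\times}\times\F_{\ell}^{\times}$, fixing a nonzero vector kills only one diagonal component --- a point spanning one of the two eigenlines is fixed by all of $\{1\}\times\F_{\ell}^{\times}$. This is exactly why Lemma \ref{lem:div}(ii) and Theorem \ref{BIGSPY}(ii) record only $\ell^{b-1}(\ell-1)$, not $(\ell-1)^2$, when $\leg{\Delta}{\ell}=1$ and $a=0$. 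Consequently, for a split prime $\ell\equiv 3\pmod{4}$ the divisibility you actually get is (at best) $h_K\cdot\frac{\ell-1}{2}\mid d$, a quantity that can perfectly well be odd, so no parity contradiction arises; and even granting that divisibility, it involves $h_K$ for the actual CM field $K$, not $h_{\Q(\sqrt{-\ell})}$, so it does not place $d$ in $M(\Gg)$. The true statement --- that in odd degree the relevant prime must satisfy $K=\Q(\sqrt{-\ell})$, so in particular ramifies --- is not a consequence of degree divisibilities at all. The paper gets it from the real embedding of $F$: Proposition \ref{prop:realcyclo} gives $\Q(\zeta_{\ell})\subseteq FK$, hence $\Q(\sqrt{-\ell})\subseteq FK$, and since $[FK:\Q]=2d\equiv 2\pmod{4}$ the field $FK$ has a unique quadratic subfield, forcing $K=\Q(\sqrt{-\ell})$. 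Your proposal never uses the real embedding, and without it this step cannot close.

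Second, your ``hard part'' aims at a false target. You propose to show that no CM elliptic curve over an odd-degree field has a point of order $9$; but such points exist --- by the converse half of Proposition \ref{prop:oddthm}, the group $\Z/3^{n}\Z$ occurs as CM torsion in odd degree for every $n$ (note $3\equiv 3\pmod 8$). The paper's handling of $\ell=3$ is entirely different: a non-Olson group supported at $2$ and $3$ in odd degree must contain a point of order $9$, whence $\Q(\zeta_9)\subseteq FK$ and $3\mid d$; and $3$ lies in $M(\Gg)$ not through $\ell=3$ but through $\ell=7$, since $\frac{7-1}{2}\cdot h_{\Q(\sqrt{-7})}=3$. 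Missing this identification, your argument cannot terminate. Likewise, your plan to eliminate points of order $8$ (and $\Z/2\Z\times\Z/4\Z$, etc.) via divisibilities and ray class degrees fails in at least one case: for $K=\Q(i)$, where $2$ ramifies and $w_K=4$, Lemma \ref{lem:div}(iv) with $a=0$, $b=3$ gives only $2^{a+b-1}\mid w_K d = 4d$, which is no constraint on odd $d$. Eliminating these groups is genuine content of Proposition \ref{prop:oddthm}, the odd-degree classification of \cite{BCS15}, whose proof again rests on complex conjugation. In short, the paper's hard inclusion is powered by Propositions \ref{prop:oddthm} and \ref{prop:realcyclo}; your proposal substitutes pure Cartan/divisibility arguments for both, and these are demonstrably insufficient.
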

\noindent
An algorithm for computing all torsion subgroups of CM elliptic curves in degree
$d$ is presented in \cite{TORS2}.  In principle this algorithm allows
us to determine whether a given degree $d$ is Olson.  However, the algorithm
requires as input the list of all imaginary quadratic fields of class
number properly dividing $d$ so is for sufficiently large composite $d$ quite impractical.
In contrast, using Theorem \ref{OLSONDEGREETHM}, one can compute in a day on a modern desktop computer that there are
\num{26462418808} Olson degrees $d \le 10^{11}$.  Since $\pi(10^{11}) = \num{4118054813}$, this adds
\num{22344363994} composite values of $d$ for which the complete list of torsion
subgroups of CM elliptic curves in degree $d$ is known. Such calculations
suggest that the density of Olson degrees, which by Theorem
\ref{thm:density2} lies in $(0,1)$, is in fact slightly larger than $\frac{1}{4}$; see Table \ref{tbl:olsoncounts}.

We also found that for all primes $p > 5$ and all $n \in \Z^+$,
if $p^n \leq 10^{30}$ then $p^n$ is an Olson degree.\footnote{A warning: To perform the above computations, we made extensive use of the \texttt{PARI/GP} command \texttt{quadclassunit} to compute class numbers of imaginary quadratic fields. That algorithm has been proved correct \emph{only under the assumption of the Generalized Riemann Hypothesis}. However, the counts up to $10^6$ in Table \ref{tbl:olsoncounts} have been certified unconditionally, as has the result that there are no non-Olson prime powers $p^n \le 10^{14}$ (with $p> 5$).}
Thus we conjecture
the following strengthening of Theorem \ref{thm:primepower}.

\begin{conj}\label{conj:bob} $p^n$ is an Olson degree for every prime $p > 5$ and all $n \in \Z^+$.
\end{conj}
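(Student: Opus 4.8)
The plan is to convert Conjecture~\ref{conj:bob} into a statement purely about class numbers by feeding the shape of a prime power into Theorem~\ref{OLSONDEGREETHM}. By that theorem a degree is non-Olson precisely when it is divisible by some element of $\Gg$. Fix a prime $p>5$ and $n\in\Z^+$. Since $p^n$ is odd, the generator $2\in\Gg$ never divides it, so $p^n$ can fail to be Olson only through a generator of the form $\frac{\ell-1}{2}h_{\Q(\sqrt{-\ell})}$ with $\ell\equiv 3\pmod 4$ prime and $\ell>3$. As $p$ is prime, any divisor of $p^n$ is a power of $p$, so such a generator divides $p^n$ if and only if \emph{both} $\frac{\ell-1}{2}$ and $h_{\Q(\sqrt{-\ell})}$ are powers of $p$. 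Because $\ell\ge 7$ forces $\frac{\ell-1}{2}\ge 3$, we must have $\frac{\ell-1}{2}=p^a$ with $a\ge 1$, i.e. $\ell=2p^a+1$ (and one checks that $\ell\equiv 3\pmod 4$ is then automatic, as $p$ is odd). Since $n$ is arbitrary, $p$ admits some non-Olson power exactly when such an $\ell$ exists, so Conjecture~\ref{conj:bob} is \emph{equivalent} to the following assertion: for every prime $p>5$ and every $a\ge 1$ such that $\ell:=2p^a+1$ is prime, the class number $h_{\Q(\sqrt{-\ell})}$ is not a power of $p$.

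To attack the reduced statement I would try to produce, for each such $\ell$, a prime factor of $h_{\Q(\sqrt{-\ell})}$ different from $p$ --- equivalently, an ideal class of order prime to $p$ and $\neq 1$. Several standard inputs are available but none is by itself decisive. Genus theory gives that $h_{\Q(\sqrt{-\ell})}$ is odd (its discriminant $-\ell$ is a prime discriminant, so the $2$-rank of the class group vanishes), which is consistent with, but does not prove, the claim. Siegel's lower bound $h_{\Q(\sqrt{-\ell})}\gg_\epsilon \ell^{1/2-\epsilon}\gg p^{a(1/2-\epsilon)}$ together with the trivial upper bound $h_{\Q(\sqrt{-\ell})}\ll \sqrt{\ell}\,\log\ell$ pins the class number down to size $\approx p^{a/2}$; this already disposes of the smallest exponent (for $a=1$, the alternative $p\mid h$ would force $p\ll \sqrt{p}\,\log p$, impossible for large $p$, while $h=1$ holds for only finitely many fields). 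The natural next move is to pick a small prime $q\neq p$ that splits in $\Q(\sqrt{-\ell})$ and argue that the class of a prime above $q$ contributes a factor coprime to $p$.

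The main obstacle is exactly this last point: size information alone cannot certify that a class number \emph{fails} to be a pure power of $p$, so one must control the \emph{factorization} of $h_{\Q(\sqrt{-\ell})}$, uniformly over the thin family $\ell=2p^a+1$ and over all $p>5$. The splitting-prime argument stalls on the possibility that every small split class happens to have $p$-power order (or is principal), which cannot be excluded by elementary means; ruling it out amounts to a $p$-indivisibility statement for the class numbers of an entire sequence of imaginary quadratic fields, which is out of reach in general. This is why the result is posed as a conjecture, and why Theorem~\ref{thm:primepower} only secures the weaker assertion in which $p$ is allowed to depend on $n$: fixing $n$ bounds the exponents to the finite range $a,b\le n$ and lets one exploit the growth of $h_{\Q(\sqrt{-\ell})}$ with $p$, whereas Conjecture~\ref{conj:bob} demands a single argument valid for all $p>5$ and all exponents at once. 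A realistic intermediate target would be to combine the reduction above with sieve or equidistribution input to show that the exceptional set of $p$ has density zero; a complete proof appears to require a genuinely new handle on the $p$-part of these class numbers.
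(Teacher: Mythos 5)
You were asked to prove Conjecture \ref{conj:bob}, which the paper itself poses as a conjecture precisely because no proof is known: the paper offers only computational evidence (all $p^n \le 10^{30}$ with $p>5$ are Olson) and the strictly weaker Theorem \ref{thm:primepower}, in which the admissible primes $p$ depend --- ineffectively, via Siegel --- on $n$. So declining to manufacture a complete proof is the correct outcome here, and your reduction is sound: since $p^n$ is odd, Theorem \ref{OLSONDEGREETHM} shows $p^n$ is non-Olson if and only if some prime $\ell = 2p^a+1$ ($a \ge 1$) has $h_{\Q(\sqrt{-\ell})} = p^s$ with $a+s \le n$, and quantifying over all $n$ this makes Conjecture \ref{conj:bob} equivalent to your reformulated class-number statement. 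This is exactly the reduction that underlies the paper's proof of Theorem \ref{thm:primepower}. One point of precision: in your equivalence, ``power of $p$'' must include $p^0=1$, i.e.\ ruling out class number one is part of the conjecture; your later discussion of $h=1$ shows you understand this, but the statement as written is ambiguous.

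Where you diverge from the paper is in the arithmetic brought to bear on the reduced statement. Your proposed attack --- producing an ideal class of order coprime to $p$ from small split primes --- stalls for exactly the reason you give. The paper's partial results rest instead on an elementary observation you did not make: working modulo $3$, if $a$ is \emph{even} then $\ell = 2p^a+1 \equiv 0 \pmod{3}$, so $\ell$ being a prime greater than $3$ forces $a$ odd. Paired with the size squeeze you do describe (Siegel's lower bound plus $h_{\Q(\sqrt{-\ell})} \le \ell^{1/2}\log\ell$ trap $p^s$ strictly between $p^{a/2-1/3}$ and $p^{a/2+1/3}$ once $p$ is large in terms of $n$, forcing $s=a/2$ and hence $a$ even), this yields an outright contradiction --- and that \emph{is} the paper's proof of Theorem \ref{thm:primepower}. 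A variant with explicit bounds in place of Siegel, plus a separate elementary argument disposing of $h=1$ (in a class-number-one field of large discriminant, $3$ is inert, forcing $3 \mid \ell - 1 = 2p^a$), gives the paper's effective result for $n \le 3$. In short, the single exponent pattern that size considerations cannot exclude, namely $s = a/2$, is precisely the pattern the congruence kills; your write-up has the size half of this pincer (your $a=1$ analysis is a special case) but, missing the mod-$3$ half, proposes a harder factorization-theoretic route than is needed for the partial results, while correctly diagnosing why the full uniform conjecture remains out of reach.
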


\section{Divisibility requirements for rational torsion}
\noindent
The next two results are taken from the already mentioned work \cite{CP15}.

\begin{lem}[{\cite[Theorem 5]{CP15}}]\label{lem:rayclass} Let $K$ be an imaginary quadratic field, $F\supset K$ be a number field, $E_{/F}$ a $K$-CM elliptic curve, and $N \in \Z^{+}$. If $(\Z/N\Z)^{2} \hookrightarrow E(F)$, then $F \supset K^{(N \O_K)}$.
\end{lem}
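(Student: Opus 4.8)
The plan is to deduce this from the first main theorem of complex multiplication, which identifies ray class fields of $K$ with fields generated over $K$ by $j$-invariants and torsion data of CM elliptic curves. Since $F \supset K$, the endomorphisms of $E$ are all defined over $F$, so the mod-$N$ Galois representation of $G_F = \mathrm{Gal}(\overline{F}/F)$ on $E[N]$ commutes with the $\O$-action, and the hypothesis $(\Z/N\Z)^2 \hookrightarrow E(F)$ says exactly that this representation is trivial, i.e. $F(E[N]) = F$. The task is thus to show that the field generated over $K$ by the coordinates of the $N$-torsion already contains $K^{(N\O_K)}$.

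First I would treat the case of CM by the maximal order $\O_K$. Fixing a Weber function $\mathfrak{h} \colon E \to E/\mathrm{Aut}(E)$, the first main theorem asserts that $K(j(E), \mathfrak{h}(E[N])) = K^{(N\O_K)}$, where $\mathfrak{h}(E[N])$ denotes the set of values $\mathfrak{h}(P)$ for $P \in E[N]$. Now $j(E) \in F$ because $E$ is defined over $F$, and each $\mathfrak{h}(P)$ is a fixed rational function of the coordinates of $P$; since $(\Z/N\Z)^2 \hookrightarrow E(F)$ places every $P \in E[N]$ in $E(F)$, all of these values lie in $F$. Hence $F \supseteq K(j(E), \mathfrak{h}(E[N])) = K^{(N\O_K)}$, as desired. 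It is worth noting that only the $\mathrm{Aut}(E)$-orbits of the torsion points --- essentially their $x$-coordinates --- are needed, so full $N$-torsion is in fact more than the argument consumes; this is also why the conclusion is insensitive to replacing $E$ by a quadratic twist.

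The main obstacle is the case of a non-maximal order $\O = \Z + \ff\O_K$ of conductor $\ff$, since the conclusion still involves the maximal order ray class field $K^{(N\O_K)}$. Here I would pass to an $\O_K$-CM elliptic curve $E'$ related to $E$ by an isogeny whose degree is supported on $\ff$, chosen so that the rationality of the full $N$-torsion of $E$ over $F$ forces the relevant torsion data of $E'$ to be defined over $F$ as well, and then apply the maximal-order case to $E'$. Executing this requires care on two fronts: controlling how the isogeny moves $N$-torsion, so that the Weber values of $E'[N]$ are recovered inside $F$, and handling the enlarged automorphism group when $j(E) \in \{0, 1728\}$, where the Weber function must be normalized accordingly. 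These are the technical points where the argument must be made precise; the underlying principle, that rationality of torsion propagates to rationality of ray class fields, is exactly the content of the main theorem of CM.
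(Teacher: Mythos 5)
The paper offers no proof of this lemma at all: it is quoted directly from \cite[Theorem 5]{CP15}, so your attempt can only be compared against the standard argument carried out in that source. Your maximal-order case is exactly that standard argument and is correct: for an $\O_K$-CM curve the First Main Theorem of CM gives $K(j(E), \mathfrak{h}(E[N])) = K^{(N\O_K)}$, the Weber function is defined over $F$, and rationality of $E[N]$ therefore places $j(E)$ and all Weber values in $F$.

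The gap is in the non-maximal case, which you leave as a plan; the step you defer (``controlling how the isogeny moves $N$-torsion'') is precisely where the content lies, and the naive execution of it fails. Let $\ff$ be the conductor of $\O$ and $\pi\colon E \to E'$ the canonical isogeny to an $\O_K$-CM curve; its kernel is $E[\ff\O_K]$, an $\O$-stable subgroup, hence stable under $G_F = \mathrm{Gal}(\overline{F}/F)$ because $F \supset K$ makes all endomorphisms $F$-rational, so $\pi$ and $E'$ are defined over $F$. The problem: if $d = \gcd(N,\ff) > 1$, then $\pi(E[N])$ is a \emph{proper} subgroup of $E'[N]$, of index $d$. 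Transferring rationality naively --- say via the dual isogeny $\hat\pi$, which shows only that each $\sigma \in G_F$ moves points of $E'[N]$ by elements of the cyclic group $(\ker\hat\pi)[N]$ --- proves merely that $G_F$ acts on $E'[N]$ through scalars $\alpha_\sigma \equiv 1 \pmod{(N/d)\O_K}$, a constraint whose ambiguity is far larger than the image of $\O_K^{\times}$, hence not enough to put $\mathfrak{h}(E'[N])$ inside $F$. What rescues the reduction is a module-theoretic observation absent from your sketch: $E[N]$ is free of rank one over $\O/N\O$, and $\pi$ carries an $\O/N\O$-generator $P$ of $E[N]$ to an $\O_K/N\O_K$-\emph{generator} of $E'[N]$ (in the lattice picture $E = \mathbb{C}/\aa$: if $\O a + N\aa = \aa$, then $\O_K a + N\aa\O_K = \aa\O_K$). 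Since $F \supset K$ forces $G_F$ to act on $E'[N]$ by scalars in $(\O_K/N\O_K)^{\times}$, fixing the single $F$-rational point $\pi(P)$ forces the entire action to be trivial; hence $E'[N] \subseteq E'(F)$ and your maximal-order case applies to $E'$. Alternatively, one can avoid isogenies altogether by invoking the main theorem of CM for arbitrary orders (S\"ohngen): $K(j(E), \mathfrak{h}(E[N]))$ is the class field for the congruence subgroup generated by principal ideals $\alpha\O_K$ with $\alpha \in \O$, $\alpha \equiv 1 \pmod{N\O}$, and since $N\O \subseteq N\O_K$ this subgroup lies in the ray group mod $N\O_K$, giving $K(j(E),\mathfrak{h}(E[N])) \supseteq K^{(N\O_K)}$ directly. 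As written, your argument establishes the lemma only when $\O = \O_K$, or more generally when $\gcd(N,\ff) = 1$.
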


\begin{lem}[{\cite[Theorem 6]{CP15}}]\label{lem:squaring} Let $K$ be an imaginary quadratic field, $F\supset K$  a number field, and $E_{/F}$  an $\O$-CM elliptic curve. Suppose that $E(F)[\ell^{\infty}] \cong \Z/{\ell^a}\Z \times \Z/\ell^{b}\Z$, where $b\ge a \ge 0$ and $b \ge 1$. Then $[F(E[\ell^b]): F] \le \ell^{b-a}$. In fact, letting $\Delta$ denote the discriminant of $\O$, we have the following more precise results:
\begin{enumerate}
\item If $\leg{\Delta}{\ell} =0$ or $=-1$, then $[F(E[\ell^b]):F]\mid \ell^{b-a}$.
\item If $\leg{\Delta}{\ell}=1$, then either $a=0$ and $[F(E[\ell^b]):F] \mid (\ell-1)\ell^{b-1}$, or $a>0$ and $[F(E[\ell^b]):F] \mid \ell^{b-a}$.
\end{enumerate}
\end{lem}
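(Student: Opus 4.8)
The plan is to identify $[F(E[\ell^b]):F]$ with the order of the image of the mod $\ell^b$ Galois representation, and then to bound that order using the constraint imposed by the prescribed rational torsion. Write $G_F = \mathrm{Gal}(\overline{F}/F)$ and let $\rho \colon G_F \to \mathrm{Aut}(E[\ell^b])$ be the mod $\ell^b$ representation, with image $H := \rho(G_F)$, so that $[F(E[\ell^b]):F] = \#H$. The key CM input is that, since $F \supseteq K$, every element of $\O = \End(E)$ is defined over $F$; hence $G_F$ acts $\O$-linearly on $E[\ell^b]$ and $H$ lies in $\mathrm{Aut}_{\O}(E[\ell^b])$, the Cartan group attached to $R := \O/\ell^b\O$. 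By definition the submodule of $E[\ell^b]$ fixed by $H$ is $E(F)[\ell^b]$, and under our hypothesis this equals $E(F)[\ell^\infty] \cong \Z/\ell^a\Z \times \Z/\ell^b\Z$, as everything in sight is killed by $\ell^b$.

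The main device is the ideal $I \trianglelefteq R$ generated by $\{u - 1 : u \in H\}$. On the one hand $H \subseteq 1 + I$ by construction, so $H \subseteq (1+I)\cap R^\times$, which is a subgroup of $R^\times$; by Lagrange, $\#H$ divides $\#((1+I)\cap R^\times)$. On the other hand an element $m \in E[\ell^b]$ is fixed by $H$ if and only if $Im = 0$, so the fixed module is the $I$-torsion $E[\ell^b][I]$. Thus the problem reduces to a purely ring-theoretic one: classify the ideals $I \trianglelefteq R$ for which $E[\ell^b][I] \cong \Z/\ell^a\Z \times \Z/\ell^b\Z$, and for each bound $\#((1+I)\cap R^\times)$.

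When $\O$ is maximal at $\ell$ the module $E[\ell^b]$ is free of rank one over $R = \Ok/\ell^b\Ok$, and the case analysis is clean. If $\leg{\Delta}{\ell}=-1$ then $R$ is local with residue field $\F_{\ell^2}$ and principal ideals $\ell^c R$; here $E[\ell^b][\ell^c R] \cong (\Z/\ell^c\Z)^2$, which is isomorphic to $\Z/\ell^a\Z \times \Z/\ell^b\Z$ only when $a=b$, forcing $I=0$ and $\#H \mid 1 = \ell^{b-a}$. If $\leg{\Delta}{\ell}=1$ then $R \cong \Z/\ell^b\Z \times \Z/\ell^b\Z$ and the ideals correspond to pairs $(\ell^i,\ell^j)$; matching $E[\ell^b][I] \cong \Z/\ell^i\Z \times \Z/\ell^j\Z$ to the hypothesis gives $\{i,j\}=\{a,b\}$, and a direct count of $(1+I)\cap R^\times$ produces $\ell^{b-a}$ when $a>0$ and $(\ell-1)\ell^{b-1}$ when $a=0$, which are exactly the two alternatives in (2). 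If $\leg{\Delta}{\ell}=0$ with $\ell$ ramified in $K$, then $R$ is a chain ring with uniformizer $\pi$ satisfying $\pi^{2b}=0$, and the possible groups $E[\ell^b][(\pi^c)]$ have elementary divisors differing by at most one; this forces $b-a\le 1$ and $\#((1+I)\cap R^\times) = \ell^{b-a}$, matching (1). These computations also reconstruct the ``nearly balanced'' conclusions recorded in Theorem \ref{BIGSPY}.

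The hard part will be the remaining case of (1), namely $\leg{\Delta}{\ell}=0$ with $\ell \mid \ff$, where $\O$ fails to be maximal at $\ell$. Then $R = \O/\ell^b\O$ is a quotient of a non-maximal order and $E[\ell^b]$ need not be free over $R$, so the identification of $E[\ell^b][I]$ and the lattice of ideals are genuinely more delicate. I would attack this by passing to the $\ell$-adic Tate module and analyzing its $\O\otimes\Z_\ell$-module structure directly, or by pushing forward along an $\ell$-power isogeny $E \to E'$ with $\End(E') = \Ok$ maximal at $\ell$ and tracking how both $E(F)[\ell^\infty]$ and the degree $[F(E[\ell^b]):F]$ transform. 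I expect this bookkeeping to be the crux; the expected payoff is that the non-maximal Cartan sits inside the maximal one, so the group $(1+I)\cap R^\times$ can only shrink, yielding the uniform bound $[F(E[\ell^b]):F]\mid \ell^{b-a}$ of case (1).
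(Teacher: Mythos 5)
Your framework is the right one, and it is the same mechanism as in the source of this lemma: the paper itself does not prove the statement but quotes it from \cite{CP15} (Theorem 6), remarking that (i) and (ii) ``follow immediately from the proof'' there, which likewise runs through the image $H$ of Galois in the Cartan $(\O/\ell^b\O)^\times$ and a case analysis on the splitting of $\ell$. Your treatment of the three cases where $\O$ is maximal at $\ell$ is correct and complete: the identification $[F(E[\ell^b]):F]=\#H$ with $H\subseteq (1+I)\cap R^\times$, the identification of the fixed module with $\operatorname{Ann}_R(I)$, and the three ideal computations (forcing $a=b$ when $\leg{\Delta}{\ell}=-1$, producing $\ell^{b-a}$ resp.\ $(\ell-1)\ell^{b-1}$ in the split case, and forcing $b-a\le 1$ with count $\ell^{b-a}$ in the ramified maximal case) all check out. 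The genuine gap is the case $\ell \mid \ff$ of part (1): there you offer two strategies (Tate-module analysis, or pushing through an $\ell$-power isogeny to a curve with $\ell$-maximal endomorphism ring) but execute neither, and the isogeny route is not routine --- an $\ell$-power isogeny changes both $E(F)[\ell^\infty]$ (hence $a$ and $b$) and the field $F(E[\ell^b])$, and one would have to track all of this simultaneously. As written, case (1) with $\O$ non-maximal at $\ell$ is unproved.

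The good news is that the obstruction you feared does not exist, and your own machinery closes the case. First, $E[\ell^b]$ \emph{is} free of rank one over $R=\O/\ell^b\O$ even when $\ell\mid\ff$: writing $E\cong \mathbb{C}/\aa$ with $\aa$ a proper fractional $\O$-ideal, properness implies invertibility for quadratic orders, hence $\aa$ is locally principal and $E[\ell^b]\cong \aa/\ell^b\aa\cong \O/\ell^b\O$ (this is standard; see e.g.\ Lenstra's work on the CM structure of elliptic curves). Second, no classification of ideals is needed: when $\ell\mid\ff$ the ring $R$ is Artinian local with residue field $\F_\ell$ and nilpotent maximal ideal $\mathfrak{m}$; since $\operatorname{Ann}_R(I)\ne 0$ (it contains a point of order $\ell^b$, $b\ge 1$), $I$ is proper, so $I\subseteq\mathfrak{m}$ and $1+I$ is a subgroup of $R^\times$ of order $\#I$. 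Finally, $\O$ is monogenic over $\Z$, hence Gorenstein, and $\ell^b$ is a nonzerodivisor, so $R$ is an Artinian Gorenstein local ring; Matlis self-duality then gives $\#I=\#R/\#\operatorname{Ann}_R(I)=\ell^{2b}/\ell^{a+b}=\ell^{b-a}$. Hence $\#H \mid \ell^{b-a}$, exactly the missing conclusion of (1). With this paragraph inserted in place of your sketch, the proposal becomes a complete proof.
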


\begin{rmk} Statements (i) and (ii) are not explicitly included in \cite[Theorem 6]{CP15}; however, they follow immediately from the proof. In fact, as we recall below, when $\leg{\Delta}{\ell}=-1$ we always have $b=a$.
\end{rmk}

\begin{lem}\label{lem:div} Let $F$ be a degree $d$ number field containing an imaginary quadratic field $K$. Let $E_{/F}$ be an elliptic curve with $\O$-CM, where $\O$ is the order in $K$ of discriminant $\Delta$. Suppose $E(F)[{\ell^{\infty}}] \cong \Z/\ell^{a}\Z \times \Z/\ell^{b}\Z$, where $b \ge a \ge 0$ and $b\ge 1$. If
\begin{enumerate}
\item $\leg{\Delta}{\ell}=-1$, then $a=b$, and $h_K \cdot \ell^{2b-2} (\ell^2-1) \mid w_K \frac{d}{2}$,
\item $\leg{\Delta}{\ell}=1$ and $a=0$, then $h_K \cdot \ell^{b-1}(\ell-1) \mid w_K \frac{d}{2}$,
\item $\leg{\Delta}{\ell}=1$ and $a>0$, then $h_K \cdot \ell^{a+b-2}(\ell-1)^2 \mid w_K \frac{d}{2}$,
\item $\leg{\Delta}{\ell}=0$ and $\ell$ ramifies in $K$, then $h_K \cdot \ell^{a+b-1}(\ell-1) \mid w_K \frac{d}{2}$,
\item $\leg{\Delta}{\ell}=0$ and $\ell$ is unramified in $K$, then $h_K \cdot \ell^{\max\{a+b-2,0\}}(\ell-1)(\ell-\leg{\Delta_K}{\ell}) \mid w_K \frac{d}{2}$.
\end{enumerate}
\end{lem}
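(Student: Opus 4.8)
The statement to prove is Lemma \ref{lem:div}, which converts the ray-class-field divisibility relations of Theorem \ref{BIGSPY} into divisibility statements about the degree $d = [F:\Q]$ itself. The key observation is that Theorem \ref{BIGSPY} already does all the hard arithmetic work: in each of the five cases it produces a divisibility of the form
\[ (\text{quantity involving } \ell, a, b) \mid w_K \cdot [F \cap K^{(\ell^b \O_K)} : K^{(\O_K)}]. \]
What remains is purely a matter of bounding the index $[F \cap K^{(\ell^b \O_K)} : K^{(\O_K)}]$ in terms of $d$, $h_K$, and $w_K$, and then propagating the resulting divisibilities.

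\emph{First step.} I would recall the basic facts from the theory of complex multiplication about the ray class field $K^{(\O_K)}$, which is just the Hilbert class field $H$ of $K$, so that $[K^{(\O_K)} : K] = h_K$. Since $F \supseteq K$, I have the tower $K \subseteq K^{(\O_K)} \cap F \subseteq F \cap K^{(\ell^b \O_K)} \subseteq F$. The crucial containment is that $K^{(\O_K)} = H$ is contained in $F$: this holds because $F$ contains $K$ and $E$ is an $\O$-CM elliptic curve defined over $F$, so the $j$-invariant $j(E) \in F$ generates $H$ over $K$ (classical CM theory), forcing $H \subseteq F$. Therefore $K^{(\O_K)} \subseteq F \cap K^{(\ell^b \O_K)}$, and I may write
\[ [F \cap K^{(\ell^b \O_K)} : K^{(\O_K)}] \cdot [K^{(\O_K)} : K] = [F \cap K^{(\ell^b \O_K)} : K], \]
so that $[F \cap K^{(\ell^b \O_K)} : K]$ is divisible by $h_K$ and divides $[F:K] = d/2$.

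\emph{Second step.} Multiplying the Theorem \ref{BIGSPY} divisibilities through by $h_K$ and using the tower identity, I get in each case that the relevant quantity times $h_K$ divides $w_K \cdot [F \cap K^{(\ell^b \O_K)} : K]$. Since $[F \cap K^{(\ell^b \O_K)} : K]$ divides $[F:K] = d/2$, I conclude that the quantity times $h_K$ divides $w_K \cdot \frac{d}{2}$, which is exactly the assertion in each of cases (i)--(v). The five cases of Lemma \ref{lem:div} correspond bijectively to the five cases of Theorem \ref{BIGSPY}, and the multiplier $\ell^{2b-2}(\ell^2-1)$, $\ell^{b-1}(\ell-1)$, and so on, carry over verbatim. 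The claim $a = b$ in case (i) is simply imported from Theorem \ref{BIGSPY}(1).

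\emph{Main obstacle.} The only genuinely delicate point — and the one I would be most careful to justify — is the containment $K^{(\O_K)} \subseteq F$, i.e.\ that the full Hilbert class field sits inside $F$, which is what licenses multiplying by $h_K$ and dividing cleanly into $d/2$. When $\O$ is a non-maximal order this requires a little care, since the relevant ring class field is $K^{(\ff \O_K)}$-adjacent rather than the Hilbert class field of the maximal order; however, $K^{(\O_K)}$ as used in Theorem \ref{BIGSPY} already refers to the $\O_K$-ray class field (the Hilbert class field), and the containment of $j(E)$ follows from the fact that $F$ contains the field of moduli. Everything else is bookkeeping: matching the five cases and verifying that the index $[F \cap K^{(\ell^b\O_K)}:K]$ divides $d/2$, which is immediate from $F \cap K^{(\ell^b \O_K)} \subseteq F$ and $[F:K] = d/2$.
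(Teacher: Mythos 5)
Your proposal is circular relative to this paper's logical structure, and that is a genuine gap. You treat Theorem \ref{BIGSPY} as a black box that ``already does all the hard arithmetic work,'' but the paper's only proof of Theorem \ref{BIGSPY} consists of reducing it to the case-by-case analysis carried out \emph{in the proof of Lemma \ref{lem:div}} --- the very statement you are asked to prove. So nothing in your write-up establishes the substantive content; it is all deferred to a result that, as proved here, sits logically downstream of Lemma \ref{lem:div}. A genuine proof must engage with the two inputs from \cite{CP15}: Lemma \ref{lem:rayclass}, which applied over $F(E[\ell^b])$ gives $K^{(\ell^b\O_K)} \subseteq F(E[\ell^b])$, and Lemma \ref{lem:squaring}, which controls $d_0 \coloneqq [F(E[\ell^b]):F]$. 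Combining the first with the class field theory degree formula $[K^{(\ell^b\O_K)}:K^{(\O_K)}] = \Phi(\ell^b)/[U:U_{\ell^b}]$, where $\Phi(\ell^b) = \ell^{2b-2}(\ell-1)(\ell-\leg{\Delta_K}{\ell})$ and $[U:U_{\ell^b}] \mid w_K$, and with $[F:K^{(\O_K)}] = d/(2h_K)$ (valid since $K^{(\O_K)} \subseteq K(j(E)) \subseteq F$), one obtains
\[ \frac{\ell^{2b-2}(\ell-1)(\ell-\leg{\Delta_K}{\ell})}{\gcd\bigl(\ell^{2b-2}(\ell-1)(\ell-\leg{\Delta_K}{\ell}),\,d_0\bigr)} \;\Big|\; w_K\frac{d}{2h_K}, \]
and the five cases then follow from the case-by-case control of $d_0$: when $\leg{\Delta}{\ell}=-1$, a point of order $\ell^b$ generates $E[\ell^b]$ as an $\O$-module, so $a=b$ and $d_0=1$; when $\leg{\Delta}{\ell}=1$ and $a=0$, $d_0 \mid \ell^{b-1}(\ell-1)$; and $d_0 \mid \ell^{b-a}$ in the remaining cases, with the denominator dividing $\ell^{\min\{b-a,2b-2\}}$ in case (v). None of this appears in your proposal.

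To be clear about what you did get right: your bookkeeping step is valid mathematics, and it is essentially the maneuver the paper itself uses later when deducing Theorem \ref{lem:div2}. If Theorem \ref{BIGSPY} had an independent proof, then multiplying its divisibilities by $h_K$, invoking $K^{(\O_K)} \subseteq K(j(E)) \subseteq F$ (the ring class field of any order contains the Hilbert class field, which correctly settles the worry you raise about non-maximal orders), and observing that $[F \cap K^{(\ell^b\O_K)}:K]$ divides $[F:K]=d/2$, would indeed yield all five statements. So your proposal could be repaired by reorganizing: first prove Theorem \ref{BIGSPY} directly from Lemmas \ref{lem:rayclass} and \ref{lem:squaring} (via the gcd argument above applied to $[K^{(\ell^b\O_K)}:F\cap K^{(\ell^b\O_K)}]$), and only then run your reduction. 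As written, however, the essential mathematics --- the ray class containment, the degree computation, and the five-case analysis of $d_0$ --- is assumed rather than proved.
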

\begin{proof} We follow the proof of \cite[Theorem 4.6]{BCS15}. By Lemma \ref{lem:rayclass}, $K^{(\ell^{b} \O_K)} \subset F(E[\ell^b])$.  Recalling that $K(j(E))$ is a ring class field of $K$, we see that $F \supset K(j(E)) \supset K^{(\O_K)}$.  Let $d_0 = [F(E[\ell^b]):F]$.
\begin{figure}
\begin{center}
\begin{tikzpicture}[node distance=2cm]
\node (Q)                  {$\mathbb{Q}$};
\node (K) [above of=Q, node distance=1cm] {$K$};
\node (Kj) [above of=K, node distance=1.5cm] {$K^{(\O_K)}$};
\node (Kl) [above of =Kj, node distance=2 cm] {$K^{(\ell^b \O_K)}$};
\node (FK)  [above right of=Kj, node distance=2.5 cm]   {$F$};
\node (Ftor) [above of =FK, node distance=1.8cm] {$F(E[\ell^b])$};

 \draw[-] (Kj) edge node[right] {$\frac{d}{2h_K}$} (FK);
 \draw[-] (Q) edge node[left] {2} (K);
 \draw[-] (FK) edge node[right] {$d_0$} (Ftor);
 \draw[-] (Kj) edge node[left] {$\frac{\ell^{2b-2}(\ell-1)(\ell-\leg{\Delta_K}{\ell})}{[U:U_{\ell^b}]}$} (Kl);
 \draw[-] (Kj) edge node[left] {$h_K$} (K);
 \draw (Kl) -- (Ftor);

\end{tikzpicture}
\end{center}
\caption{Diagram of fields appearing in the proof of Lemma \ref{lem:div}.}\label{fig:fields}
\end{figure}
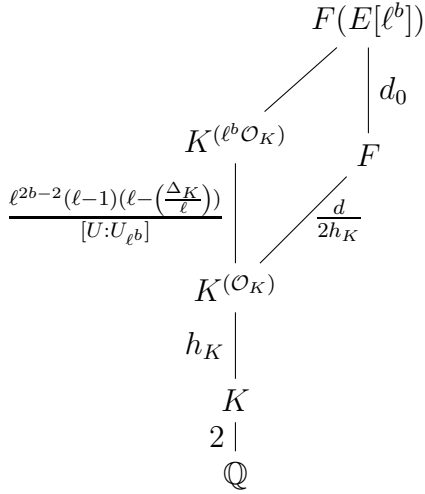

The Hilbert class field $K^{(\O_K)}$ has degree $h_K$ over $K$. From \cite[Proposition 2.1, p. 50]{childress09}, the degree of $K^{(\ell^b \O_K)}$ over $K^{(\O_K)}$ is  $\frac{\Phi(\ell^{b})}{[U: U_{\ell^{b}}]}$. Here $\Phi$ is the analogue of Euler's function for the ideals of $\O_K$, so that
\[ \Phi(\ell^b) = \# (\O_K/\ell^b \O_K)^{\times} = \ell^{2b-2} (\ell-1)(\ell-\leg{\Delta_K}{\ell}),\] $U=\O_K^{\times}$, and $U_{\ell^{b}}$ is the subgroup of units congruent to $1\pmod{\ell^{b}}$. Since $[U: U_{\ell^b}]$ divides $w_K$,
\[ \ell^{2b-2} (\ell-1)(\ell-\leg{\Delta_K}{\ell}) \mid w_K \cdot [F(E[\ell^b]):K^{(\O_K)}] = w_K \frac{d}{2h_K} d_0. \]
Thus,
\begin{equation}\label{eq:nearfinal} \frac{\ell^{2b-2} (\ell-1)(\ell-\leg{\Delta_K}{\ell})}{\gcd(\ell^{2b-2} (\ell-1)(\ell-\leg{\Delta_K}{\ell}),d_0)} \mid w_K \frac{d}{2h_K}. \end{equation}

Suppose that $\leg{\Delta}{\ell}=-1$. In this case, the existence of a single $F$-rational point of order $\ell^b$ implies that $E(F)$ contains $E[\ell^b]$. Indeed, as shown in the proof of \cite[Theorem 4.8]{BCS15}, any torsion point of order $\ell^b$ generates $E[\ell^b]$ as an $\O$-module. Thus, $a=b$ and $d_0=1$, and we obtain the first possibility in the lemma statement.

Suppose next that $\leg{\Delta}{\ell}=1$ and $a=0$. Lemma \ref{lem:squaring} shows that $d_0 \mid \ell^{b-1}(\ell-1)$, so that the left-hand side of \eqref{eq:nearfinal} is divisible by $\ell^{b-1}(\ell-1)$.  Thus, we have the second possibility indicated in the lemma. If $\leg{\Delta}{\ell}=1$ and $a > 0$, then $d_0 \mid \ell^{b-a}$, and the left-hand side of \eqref{eq:nearfinal} is divisible by  $\ell^{a+b-2}(\ell-1)^2$. This gives the third possibility indicated in the lemma statement.

Finally, suppose that $\leg{\Delta}{\ell}=0$. If $\ell$ ramifies in $K$, we use that $d_0 \mid \ell^{b-a}$ to deduce that the left-hand side of \eqref{eq:nearfinal} is divisible by $\ell^{a+b-1} (\ell-1)$. If $\ell$ is unramified in $K$, we use that the denominator in \eqref{eq:nearfinal} divides $\ell^{\min\{b-a,2b-2\}}$ to deduce that the left-hand side of \eqref{eq:nearfinal} is divisible by $\ell^{\max\{a+b-2,0\}} (\ell-1)(\ell-\leg{\Delta_K}{\ell})$. In this way, we obtain the fourth and fifth possibilities in the lemma statement.\end{proof}

\begin{proof}[Proof of Theorem \ref{BIGSPY}] Note that $[FK^{(\ell^b \O_K)}:F]=[K^{(\ell^b \O_K)}:F\cap K^{(\ell^b \O_K)}]$, and that this common value divides both $[F(E[\ell^b]):F]=d_0$ and $[K^{(\ell^b \O_K)}:K^{(\O_K)}] = \Phi(\ell^b)/[U:U_{\ell^b}]$. Consequently, $[K^{(\ell^b \O_K)}:F \cap K^{(\ell^b \O_K)}] \mid \gcd(\Phi(\ell^b), d_0)$,
and so
\[ [K^{(\ell^b \O_K)}: K^{(\O_K)}] \mid \gcd(\Phi(\ell^b), d_0) \cdot [F \cap K^{(\ell^b \O_K)}: K^{(\O_K)}]. \]
Multiply through by $[U: U_{\ell^b}]$ to find that
\[ \frac{\Phi(\ell^b)}{\gcd(\Phi(\ell^b), d_0)} \mid [U:U_{\ell^b}] \cdot [F\cap K^{(\ell^b \O_K)}: K^{(\O_K)}] \mid w_K \cdot [F\cap K^{(\ell^b \O_K)}: K^{(\O_K)}]. \]
But the first term on the left coincides with the left-hand side of \eqref{eq:nearfinal}. The theorem now follows from the case-by-case analysis found in the proof of Lemma \ref{lem:div}.
\end{proof}

Thus far we have examined the divisibility requirements for rational torsion prime-by-prime. However, the conditions combine in a natural way to give divisibility results for the full group of rational torsion. Let $F$ be a number field containing an imaginary quadratic field $K$, and let $E_{/F}$ be an elliptic curve with CM by an order in $K$ of discriminant $\Delta$. Suppose $\#E(F)[{\rm tors}]=n$. For each $\ell \mid n$, we have $E(F)[\ell^{\infty}] \cong \Z/\ell^{a_{\ell}}\Z\times \Z/\ell^{b_{\ell}}\Z$, where $b_{\ell} \ge a_{\ell} \ge 0$ and $b_{\ell} \ge 1$. Thus, $\ell^{\alpha_{\ell}} \parallel n$, where $\alpha_{\ell}\coloneqq a_{\ell} + b_{\ell}$. For each $\ell^{\alpha_{\ell}}$, we define a constant $\lambda_{\ell^{\alpha_{\ell}}}$ in the following way:

\begin{enumerate}
\item If $\leg{\Delta}{\ell}=-1$, then $\lambda_{\ell^{\alpha_{\ell}}}\coloneqq \ell^{2b_{\ell}-2}(\ell^2-1)$.
\item If $\leg{\Delta}{\ell}=1$ and $a_{\ell}=0$, then $\lambda_{\ell^{\alpha_{\ell}}}\coloneqq \ell^{b_{\ell}-1}(\ell-1)$.
\item If $\leg{\Delta}{\ell}=1$ and $a_{\ell} \geq 1$, then $\lambda_{\ell^{\alpha_{\ell}}} \coloneqq \ell^{a_{\ell}+b_{\ell}-2}(\ell-1)^2$.
\item If $\leg{\Delta}{\ell}=0$ and $\ell$ ramifies in $K$, then $\lambda_{\ell^{\alpha_{\ell}}} \coloneqq \ell^{a_{\ell}+b_{\ell}-1}(\ell-1)$.
\item If $\leg{\Delta}{\ell}=0$ and $\ell$ is unramified in $K$, then $\lambda_{\ell^{\alpha_{\ell}}} \coloneqq \ell^{\max\{a_{\ell}+b_{\ell}-2,0\}}(\ell-1)(\ell-\leg{\Delta_K}{\ell})$.
\end{enumerate}
Note that by Theorem \ref{BIGSPY}, we have $\lambda_{\ell^{\alpha_{\ell}}} \mid w_K \cdot [F \cap K^{(\ell^{b_{\ell}} \O_K)}: K^{(\O_K)}]$.

\begin{thm}\label{lem:div2} Suppose that there is a $K$-CM elliptic curve $E$ over a degree $d$ number field $F\supset K$  with $\#E(F)[{\rm tors}]=n$. Then
$h_K \cdot \prod_{\ell \mid n} \lambda_{\ell^{\alpha_{\ell}}}  \mid 6d$.
\end{thm}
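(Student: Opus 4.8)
The plan is to avoid multiplying the bounds of Theorem \ref{BIGSPY} prime-by-prime and instead run the ray-class argument once, at a single combined modulus, so that the roots of unity of $K$ enter only once rather than once per prime. (A naive product of the per-prime bounds fails: it accumulates a factor of $w_K$ at each $\ell \mid n$ — for instance when several primes are inert — producing $w_K^{\omega(n)}$ where we want a single factor dividing $6$.) Write $E(F)[\mathrm{tors}] \cong \Z/M_1\Z \times \Z/M_2\Z$ with $M_1 \mid M_2$, so $M_1 = \prod_{\ell\mid n}\ell^{a_\ell}$, $M_2 = \prod_{\ell\mid n}\ell^{b_\ell}$ and $n = M_1 M_2 = \prod_{\ell\mid n}\ell^{\alpha_\ell}$. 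Since $(\Z/M_1\Z)^2 \hookrightarrow E(F)$, Lemma \ref{lem:rayclass} gives $F \supseteq K^{(M_1\O_K)}$; and since $E$ acquires full $M_2$-torsion over $F(E[M_2])$, the same lemma gives $K^{(M_2\O_K)} \subseteq F(E[M_2])$. Set $d_0 = [F(E[M_2]):F]$. As $\mathrm{Gal}(F(E[M_2])/F)$ embeds into $\prod_{\ell}\mathrm{Gal}(F(E[\ell^{b_\ell}])/F)$, the degree $d_0$ divides $\prod_{\ell}[F(E[\ell^{b_\ell}]):F]$, each factor of which divides $(\ell-1)\ell^{b_\ell-1}$ or $\ell^{b_\ell-a_\ell}$ by Lemma \ref{lem:squaring}.

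First I would carry out the degree computation of Lemma \ref{lem:div} globally, at the modulus $M_2\O_K$. Writing $U = \O_K^{\times}$ and $U_{M_2}$ for the units congruent to $1 \bmod M_2\O_K$, recall $[K^{(M_2\O_K)}:K^{(\O_K)}] = \Phi(M_2\O_K)/[U:U_{M_2}]$ with $[U:U_{M_2}] \mid w_K$. Since $K^{(M_1\O_K)} \subseteq F \cap K^{(M_2\O_K)} \subseteq K^{(M_2\O_K)}$ and $F K^{(M_2\O_K)} \subseteq F(E[M_2])$, the index $[K^{(M_2\O_K)} : F \cap K^{(M_2\O_K)}]$ divides $\gcd(d_0, X)$, where $X = [K^{(M_2\O_K)}:K^{(M_1\O_K)}]$. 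Dividing the degree $\Phi(M_2\O_K)/[U:U_{M_2}]$ by this index and using $[F\cap K^{(M_2\O_K)}:K^{(\O_K)}] \mid [F:K^{(\O_K)}] = d/(2h_K)$, I obtain the clean divisibility
\[ h_K \cdot \frac{\Phi(M_2\O_K)}{\gcd(d_0,\, X)} \ \Big|\ [U:U_{M_2}] \cdot \frac{d}{2} \ \Big|\ w_K \cdot \frac{d}{2}. \]
The decisive point is that the unit index $[U:U_{M_2}]$ now occurs exactly once; since $w_K \in \{2,4,6\}$ and $w_K/2 \in \{1,2,3\}$, the right-hand side divides $6d$.

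It then remains to show $\prod_{\ell\mid n}\lambda_{\ell^{\alpha_\ell}}$ divides $\Phi(M_2\O_K)/\gcd(d_0,X)$; since $\lambda_{\ell^{\alpha_\ell}} \mid \Phi(\ell^{b_\ell}\O_K)$ and $\Phi$ is multiplicative, this is equivalent to $\gcd(d_0,X) \mid \prod_{\ell}\Phi(\ell^{b_\ell}\O_K)/\lambda_{\ell^{\alpha_\ell}}$. Using $\Phi(\ell^{b}\O_K) = \ell^{2b-2}(\ell-1)\bigl(\ell-\leg{\Delta_K}{\ell}\bigr)$ and the explicit case values, the quotients $\Phi(\ell^{b_\ell}\O_K)/\lambda_{\ell^{\alpha_\ell}}$ are $1$ (inert), $\ell^{b_\ell-1}(\ell-1)$ (split with $a_\ell=0$), and a pure power of $\ell$ (namely $\ell^{b_\ell-a_\ell}$, respectively $\ell^{2b_\ell-2-\max\{a_\ell+b_\ell-2,0\}}$) in the remaining cases — precisely the shapes needed to absorb the factors of $d_0$ supplied by Lemma \ref{lem:squaring}. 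I expect this reassembly to be the main obstacle: because $\gcd$ of the products $d_0$ and $X$ does not split as a product of local gcds, the divisibility must be verified one rational prime $p$ at a time, tracking how the factors $(\ell-1)$ and $\bigl(\ell-\leg{\Delta_K}{\ell}\bigr)$ — which for small $p$ may be $p$-divisible for many $\ell$ at once — are matched against the valuations of $d_0$ and $X$. Once this bookkeeping is settled, the displayed divisibilities combine to give $h_K \prod_{\ell\mid n}\lambda_{\ell^{\alpha_\ell}} \mid w_K \tfrac{d}{2} \mid 6d$, as claimed.
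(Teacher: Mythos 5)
Your field-theoretic setup is correct: $K^{(M_1\O_K)}\subseteq F$ and $K^{(M_2\O_K)}\subseteq F(E[M_2])$ both hold, the index $[K^{(M_2\O_K)}:F\cap K^{(M_2\O_K)}]$ does divide $\gcd(d_0,X)$, and your ``clean divisibility'' $h_K\cdot\Phi(M_2\O_K)/\gcd(d_0,X)\mid w_K\frac{d}{2}$ follows. The gap is the step you defer as bookkeeping: the claim $\gcd(d_0,X)\mid\prod_{\ell\mid n}\Phi(\ell^{b_\ell}\O_K)/\lambda_{\ell^{\alpha_\ell}}$ is not derivable from the constraints you invoke, and in fact fails in configurations those constraints permit. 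The troublesome primes are exactly those with $\ell\parallel n$ (so $a_\ell=0$, $b_\ell=1$), $\leg{\Delta}{\ell}=0$ and $\leg{\Delta_K}{\ell}\ne 0$, i.e.\ $\ell$ divides the conductor of $\O$ but is unramified in $K$ --- the paper's set $\Ss_2$. For such $\ell$ the local quotient is $\Phi(\ell\O_K)/\lambda_\ell=1$, yet Lemma \ref{lem:squaring} allows $d_{0,\ell}=\ell$, whence $\ell\mid d_0$. Meanwhile $X$ can be divisible by $\ell$ for reasons external to $\ell$: take another prime $\ell'\mid n$ ramified in $K$ with $a_{\ell'}=0$ and $\ell\mid\ell'-1$ (say $w_K=2$ and $\ell$ odd). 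Up to a factor dividing $w_K$, $X$ is $\prod_{q}\Phi(q^{b_q}\O_K)/\Phi(q^{a_q}\O_K)$, which contains the factor $\Phi(\ell'^{b_{\ell'}}\O_K)=\ell'^{2b_{\ell'}-1}(\ell'-1)$, so $\ord_\ell(X)\ge 1$; but the quotient at $\ell'$ is the pure power $\ell'^{b_{\ell'}}$. Hence $\ord_\ell(\gcd(d_0,X))\ge 1$ while $\ord_\ell\bigl(\prod_q \Phi(q^{b_q}\O_K)/\lambda_{q^{\alpha_q}}\bigr)=0$, and the claimed divisibility is false. No prime-by-prime matching can repair this, because the gcd irreversibly mixes contributions from different primes dividing $n$. (Whether such a configuration is realized by an actual curve is beside the point: your argument purports to deduce the step from Lemmas \ref{lem:rayclass} and \ref{lem:squaring}, and it does not follow from them.)

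This is precisely the obstruction the paper confronts, and it is why its proof runs \emph{two} independent divisibilities rather than one. The single-modulus argument (your route, essentially the paper's \eqref{eq:4}) yields $\prod_\ell\lambda_{\ell^{\alpha_\ell}}\mid w_K\frac{d}{2h_K}\prod_{\ell\in\Ss_2}\ell$, carrying a spurious squarefree factor supported on $\Ss_2$. Separately, applying Theorem \ref{BIGSPY} prime by prime and using that the fields $F\cap K^{(\ell^{b_\ell}\O_K)}$ are linearly disjoint over $K^{(\O_K)}$ inside $F$ gives \eqref{eq:5}, namely $\prod_\ell\lambda_{\ell^{\alpha_\ell}}\mid w_K^{\omega(n)}\frac{d}{2h_K}$, whose spurious factor is supported only on primes dividing $w_K$. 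Comparing the two divisibilities prime by prime produces \eqref{eq:6}, in which the extra factor shrinks to $\prod_{\ell\in\Ss_2,\,\ell\mid w_K}\ell$, and the case analysis $w_K\in\{2,4,6\}$ absorbs this into the constant $6$. So your opening diagnosis --- that the per-prime route fails because it accumulates $w_K^{\omega(n)}$ --- is only half the story: neither route suffices alone, and the theorem comes from intersecting them. Your refinement via $X=[K^{(M_2\O_K)}:K^{(M_1\O_K)}]$, using $F\supseteq K^{(M_1\O_K)}$ (which the paper's proof never invokes), is a genuine observation, but it does not close this gap; to finish, you would need to reinstate the second, per-prime divisibility, at which point you have reconstructed the paper's argument.
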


\begin{proof} Take any $K$-CM elliptic curve $E_{/F}$ with $[F:\Q]=d$ and $\#E(F)[{\rm tors}]=n$. Let $\O$ be the CM order, and say $\Delta$ is the discriminant of $\O$. As above, for each $\ell \mid n$, write $E(F)[\ell^{\infty}] \cong \Z/\ell^{a_{\ell}}\Z\times \Z/\ell^{b_{\ell}}\Z$, where $b_{\ell} \ge a_{\ell} \ge 0$ and $b_{\ell} \ge 1$. Let $N$ be the exponent of $E(F)[{\rm tors}]$, so that $N = \prod_{\ell \mid n} \ell^{b_{\ell}}$. Let $d_{0,\ell}$ denote the degree $[F(E[\ell^{b_{\ell}}]):F]$, and observe that the degree $d_0$ of $F(E[N])/F$ satisfies
\[ d_0 \mid \prod_{\ell \mid n} d_{0,\ell}. \]
Using that $F(E[N]) \supset K^{(N\O_K)}$,  we find that
\begin{equation}\label{eq:bigdiv} \prod_{\ell \mid n} \ell^{2b_{\ell}-2}(\ell-1)(\ell-\leg{\Delta_K}{\ell}) = [U:U_{N}] \cdot [K^{(N \O_K)}: K^{(\O_K)}] \mid w_K \frac{d}{2h_K} d_0 \mid w_K\frac{d}{2 h_K} \prod_{\ell \mid n}d_{0,\ell}. \end{equation}

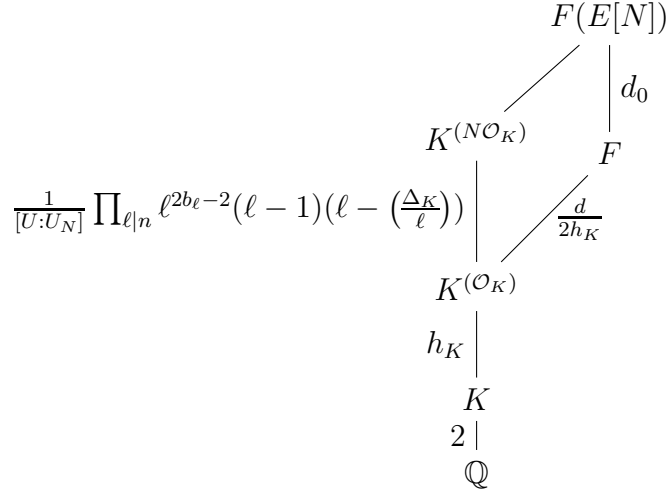
\begin{figure}
\begin{center}
\begin{tikzpicture}[node distance=2cm]
\node (Q)                  {$\mathbb{Q}$};
\node (K) [above of=Q, node distance=1cm] {$K$};
\node (Kj) [above of=K, node distance=1.5cm] {$K^{(\O_K)}$};
\node (Kl) [above of =Kj, node distance=2 cm] {$K^{(N \O_K)}$};
\node (FK)  [above right of=Kj, node distance=2.5 cm]   {$F$};
\node (Ftor) [above of =FK, node distance=1.8cm] {$F(E[N])$};

 \draw[-] (Kj) edge node[right] {$\frac{d}{2h_K}$} (FK);

 \draw[-] (Q) edge node[left] {2} (K);
 \draw[-] (FK) edge node[right] {$d_0$} (Ftor);
 \draw[-] (Kj) edge node[left] {$\frac{1}{{[U:U_{N}]}}\prod_{\ell \mid n} \ell^{2b_{\ell}-2}(\ell-1)(\ell-\leg{\Delta_K}{\ell})$} (Kl);
 \draw[-] (Kj) edge node[left] {$h_K$} (K);
 \draw (Kl) -- (Ftor);

\end{tikzpicture}
\end{center}
\caption{Diagram of fields appearing in the proof of Theorem \ref{lem:div2}.}\label{fig:fields2}
\end{figure}
\noindent
Suppose first that $\alpha_{\ell}\coloneqq a_{\ell}+b_{\ell} \ge 2$. Then the case analysis in the proof of Lemma \ref{lem:div} shows that $d_{0,\ell} \mid \ell^{2b_{\ell}-2}(\ell-1)(\ell-\leg{\Delta_K}{\ell})$, and that the quotient $\ell^{2b_{\ell}-2}(\ell-1)(\ell-\leg{\Delta_K}{\ell})/d_{0,\ell}$ is a multiple of $\lambda_{\ell^{\alpha_{\ell}}}$.

Now suppose that $\alpha_{\ell}=1$. Then $a_{\ell}=0$ and $b_{\ell}=1$. Note that we cannot have $\leg{\Delta}{\ell}=-1$ in this case, since that condition forces $a_{\ell}=b_{\ell}$. If $\leg{\Delta}{\ell}=1$, then $d_{0,\ell} \mid \ell-1$, and so
\begin{equation}\label{eq:eithercase} \lambda_{\ell}=\ell-1 \mid \ell^{2b_{\ell}-2} (\ell-1)(\ell-\leg{\Delta_K}{\ell})/d_{0,\ell}. \end{equation}
If $\leg{\Delta}{\ell}=0$ and $\leg{\Delta_K}{\ell}=0$, then $d_{0,\ell} \mid \ell$, so that again \eqref{eq:eithercase} holds.  Note that if $\leg{\Delta}{\ell} = 0$ but $\leg{\Delta_K}{\ell} \ne 0$, then $d_{0,\ell} \mid \ell$ while

\[ \lambda_{\ell}=\ell^{2b_{\ell}-2}(\ell-1)(\ell-\leg{\Delta_K}{\ell}) \in \{\ell^2-1,(\ell-1)^2\}. \]

Let $\Ss_1$ be the set of prime powers $\ell^{\alpha_\ell}$ exactly dividing $n$ for which either $\alpha_\ell \ge 2$, or $\alpha_\ell=1$ and either $\leg{\Delta}{\ell} \ne 0$ or $\leg{\Delta_K}{\ell} =0$. Let $\Ss_2$ be the complementary set of exact prime powers divisors of $n$. Of course, $\Ss_2$ actually consists only of primes.  Referring back to \eqref{eq:bigdiv},
\begin{equation}\label{eq:4}
 \prod_{\ell^{\alpha_{\ell}} \in \Ss_1} \lambda_{\ell^{\alpha_{\ell}}} \prod_{\ell \in \Ss_2} \lambda_{\ell} \mid w_K \frac{d}{2h_K} \prod_{\ell \in \Ss_2}\ell.
  \end{equation}

On the other hand, Theorem \ref{BIGSPY} implies
\[ \lambda_{\ell^{\alpha_{\ell}}} \mid w_K \cdot [F\cap K^{(\ell^{b_{\ell}} \O_K)}: K^{(\O_K)}] \]
for each prime $\ell$ dividing $n$. The fields $F\cap K^{(\ell^{b_{\ell}} \O_K)}$ are linearly disjoint extensions of $K^{(\O_K)}$, all contained in $F$. Thus, with $m\coloneqq \omega(n)$,
\begin{equation}\label{eq:5} \prod_{\ell^{\alpha_{\ell}} \in \Ss_1} \lambda_{\ell^{\alpha_{\ell}}} \prod_{\ell \in \Ss_2} \lambda_{\ell} \mid w_K^{m} \cdot [F: K^{(\O_K)}] = w_K^{m}\cdot \frac{d}{2h_K}.
\end{equation}
Putting \eqref{eq:4} and \eqref{eq:5} together, we find
\begin{equation}\label{eq:6}
 \prod_{\ell^{\alpha_{\ell}} \in \Ss_1} \lambda_{\ell^{\alpha_{\ell}}} \prod_{\ell \in \Ss_2} \lambda_{\ell} \mid w_K \frac{d}{2h_K} \prod_{\ell \in \Ss_2,  \, \ell \mid w_K}\ell.
  \end{equation}
If $w_K=2$, it follows that
\[
\prod_{\ell^{\alpha_{\ell}} \in \Ss_1} \lambda_{\ell^{\alpha_{\ell}}} \prod_{\ell \in \Ss_2} \lambda_{\ell} \mid 2 \frac{d}{h_K}.
\]
In fact, if $w_K=4$, the same divisibility condition holds. Indeed, 2 is the only prime that divides $w_K$, but $2 \notin \Ss_2$ since 2 ramifies in $K=\Q(i)$. If $w_K=6$, then $3 \notin \Ss_2$ since 3 ramifies in $K=\Q(\sqrt{-3})$, and  \eqref{eq:6} implies
\[
\prod_{\ell^{\alpha_{\ell}} \in \Ss_1} \lambda_{\ell^{\alpha_{\ell}}} \prod_{\ell \in \Ss_2} \lambda_{\ell} \mid 6 \frac{d}{h_K}.  \qedhere \]
\end{proof}
\noindent
As a consequence, in the case of $\O_K$-CM elliptic curves, we recover the SPY
Bounds as divisibilities.

\begin{cor}[SPY Divisibilities]\label{cor:SPYdiv}
Let $F$ be a number field of degree $d$ containing an imaginary quadratic field $K$,
and let $E_{/F}$ be an $\O_K$-CM elliptic curve.  If $E$ has an $F$-rational point of order $N$, then
\[ h_K \varphi(N) \mid \frac{w_K}{2} \cdot d. \]
\end{cor}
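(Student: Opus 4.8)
The plan is to deduce this from Theorem \ref{lem:div2} and its proof, exploiting the fact that an $\O_K$-CM curve has maximal CM order, so its discriminant $\Delta$ equals $\Delta_K$ and hence $\leg{\Delta}{\ell} = \leg{\Delta_K}{\ell}$ for every prime $\ell$. Write $n = \#E(F)[\tors]$ and, for each $\ell \mid n$, set $E(F)[\ell^\infty] \cong \Z/\ell^{a_\ell}\Z \times \Z/\ell^{b_\ell}\Z$ with $b_\ell \ge a_\ell \ge 0$ and $b_\ell \ge 1$, letting $\lambda_{\ell^{\alpha_\ell}}$ be as defined before Theorem \ref{lem:div2}. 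I would prove the corollary by establishing two divisibilities and chaining them: first the sharp form
\[ h_K \cdot \prod_{\ell \mid n} \lambda_{\ell^{\alpha_\ell}} \mid \tfrac{w_K}{2}\, d, \]
and second $\varphi(N) \mid \prod_{\ell \mid n} \lambda_{\ell^{\alpha_\ell}}$; together these give $h_K \varphi(N) \mid \frac{w_K}{2} d$.

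For the first divisibility I would revisit the proof of Theorem \ref{lem:div2}. There the exact prime-power divisors of $n$ are split into two sets $\Ss_1$ and $\Ss_2$, where $\Ss_2$ collects those primes $\ell \parallel n$ with $\leg{\Delta}{\ell} = 0$ but $\leg{\Delta_K}{\ell} \neq 0$; this set is precisely what forces the weaker constant $6d$ in the general statement. The key observation in the maximal-order case is that $\Ss_2 = \emptyset$: since $\Delta = \Delta_K$, the conditions $\leg{\Delta}{\ell} = 0$ and $\leg{\Delta_K}{\ell} \neq 0$ are incompatible. Feeding $\Ss_2 = \emptyset$ into the final divisibility of that proof—where the corrective product over $\ell \in \Ss_2$ with $\ell \mid w_K$ becomes empty—collapses it to $\prod_{\ell \mid n}\lambda_{\ell^{\alpha_\ell}} \mid w_K \frac{d}{2h_K}$, which is the desired sharp form.

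For the second divisibility I would argue prime by prime. Since $E$ has an $F$-rational point of order $N$, the number $N$ divides the exponent of $E(F)[\tors]$, which divides $n$; hence, setting $c_\ell = \ord_\ell(N)$, we have $1 \le c_\ell \le b_\ell$ and $\ell \mid n$ for every $\ell \mid N$, and $\varphi(N) = \prod_{\ell \mid N} \ell^{c_\ell - 1}(\ell - 1)$. A short case check against the definition of $\lambda_{\ell^{\alpha_\ell}}$—using $c_\ell \le b_\ell$ in the inert case and the split case with $a_\ell=0$, $c_\ell \le a_\ell + b_\ell - 1$ in the split case with $a_\ell \ge 1$, and $c_\ell \le a_\ell + b_\ell$ in the ramified case, together with $(\ell-1) \mid (\ell^2-1)$ and $(\ell - 1) \mid (\ell-1)^2$—shows $\ell^{c_\ell - 1}(\ell - 1) \mid \lambda_{\ell^{\alpha_\ell}}$ in every relevant case; the unramified case with $\leg{\Delta}{\ell} = 0$ does not arise for the maximal order. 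Multiplying these divisibilities over $\ell \mid N$ yields $\varphi(N) \mid \prod_{\ell \mid N}\lambda_{\ell^{\alpha_\ell}} \mid \prod_{\ell \mid n}\lambda_{\ell^{\alpha_\ell}}$, and combining with the first divisibility completes the proof. The only real subtlety, and the crux of the sharpening from $6d$ to $\frac{w_K}{2}d$, is the vanishing of $\Ss_2$ in the maximal-order case; the prime-by-prime verification is routine bookkeeping.
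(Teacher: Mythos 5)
Your proposal is correct and takes essentially the same route as the paper's own proof: the paper likewise observes that CM by the maximal order forces $\Ss_2 = \emptyset$, invokes the divisibility \eqref{eq:4} from the proof of Theorem \ref{lem:div2} to get $\prod_{\ell^{\alpha_{\ell}} \in \Ss_1} \lambda_{\ell^{\alpha_{\ell}}} \mid w_K \frac{d}{2h_K}$, and chains this with the prime-by-prime observation $\varphi(\ell^{e_\ell}) \mid \varphi(\ell^{b_\ell}) \mid \lambda_{\ell^{\alpha_\ell}}$. The only cosmetic difference is that you package the argument as two separate divisibility claims rather than a single chain of divisibilities.
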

\begin{proof}
Suppose $E_{/F}$ has a point of order $N=\prod \ell^{e_{\ell}}$. For each $\ell \mid N$,
\[
E(F)[\ell^{\infty}] \cong \Z/\ell^{a_{\ell}}\Z\times \Z/\ell^{b_{\ell}}\Z,
\]
where $b_{\ell} \ge a_{\ell} \ge 0$ and $b_{\ell} \ge e_{\ell}$. Since $E$ has CM by the maximal order, there are no primes of type $\Ss_2$, and for each $\ell^{\alpha_{\ell}} \in \Ss_1$ we have $\varphi(\ell^{b_{\ell}}) \mid \lambda_{\ell^{\alpha_{\ell}}}$. Thus by (\ref{eq:4}) we have
 \[
 \varphi(N) = \prod_{\ell \mid N} \varphi(\ell^{e_{\ell}}) \mid \prod_{\ell \mid N} \varphi(\ell^{b_{\ell}}) \mid \prod_{\ell^{\alpha_{\ell}} \in \Ss_1} \lambda_{\ell^{\alpha_{\ell}}} \mid  w_K \frac{d}{2h_K}.
 \qedhere
 \] \end{proof}

 \begin{remarks} Let us discuss the sharpness of the divisibilities obtained in Theorem \ref{BIGSPY}.
\begin{enumerate}
\item[(a)] If $\ell \neq 2$ and $a = b$, then in every case Theorem \ref{BIGSPY}
gives
\[ \ell^{2b-2}(\ell-1)(\ell-\leg{\Delta_K}{\ell})/w_K \mid [K^{(\ell^b\O_K)}: K^{(\O_K)}]. \]
Since in fact we have
\[  [K^{(\ell^b\O_K)}: K^{(\O_K)}] = \ell^{2b-2}(\ell-1)(\ell-\leg{\Delta_K}{\ell})/w_K,  \]
Theorem \ref{BIGSPY} is sharp in this case, which includes all of Case (i).

\item[(b)] If $\leg{\Delta}{\ell}=1$ and $a=0$, the image of the $\ell$-adic Galois representation lands in a split Cartan subgroup (cf. \cite[$\S 3.4$]{BCS15}).  Thus for all $n \in \Z^+$ we have an $F$-rational subgroup of order $\ell^n$. If $\ell$ is an odd prime, it follows from \cite[Theorem 7.2]{BCS15} that there is an $\O_K$-CM elliptic curve $E$ defined over an extension $L/K^{(\O_K)}$ with $[L:K^{(\O_K)}]=\varphi(\ell^n)/2$ such that $E(L)$ contains a point of order $\ell^n$. Thus the divisibility condition given is best possible when $w_K=2$ and $\ell$ is odd.

\item[(c)] In Theorem \ref{BIGSPY} we recorded the divisibilities in terms of
$[F \cap K^{(\ell^b \O_K)}:K^{(\O_K)}]$ rather than in terms of $[F:K] =
[F:K^{(\O_K)}] h_K$ because we get a stronger result by doing so.  However,
it may be more natural to ask for best possible
divisibilities of $[F:K]$.  In part (b) above, the optimality occurs in
this stronger sense.  As for part (a), when $\ell$ does not divide the
conductor $\ff$ of the order $\O$, classical CM theory implies that there is an
elliptic curve defined over $K^{(\ell^b)}$ with full $\ell^b$-torsion
and thus multiplying the bound of Theorem \ref{BIGSPY} by $h_K$ gives
the optimal divisibility of $[F:K]$ in this case.
\item[(d)] The field $F$ also contains the ring class field $K(\O)$
of the order $\O$.  Let $\ff_{\ell} = {\ord_{\ell}(\mathfrak{f}(\O))}$ and suppose
that $\ff_{\ell} \geq 1$.  (This is the condition under which we cannot reduce to the case of $\O_K$-CM.)   For all $\ell > 2$ we have
\[\ord_{\ell} [K(\O):K^{(\O_K)}] = \ell^{\ff_{\ell} - 1}, \]
so if $\ff_{\ell} > 2b-|\leg{\Delta_K}{\ell}|$ then there is a larger power of $\ell$ dividing
$[F:K^{(\O_K)}]$ than is given by Theorem \ref{BIGSPY}.  (This does not say
that Theorem \ref{BIGSPY} is not optimal but rather that it could be refined
by considering an additional parameter.)
 \item[(e)] In case (v) of Theorem \ref{BIGSPY}, there are values of $a$ and $b$ for which we suspect that the divisibility on $d = [F:K]$, at least, can be improved.  Suppose $w_K=2$, $b=2$, $a=0$ and  $\leg{\Delta_K}{\ell}=1$.  In this case Theorem \ref{BIGSPY} implies $h_K (\ell-1)^2 \mid d$, whereas the SPY bounds here give $\ell(\ell-1) \leq d$: this is not quite implied by our result!  In light of Corollary \ref{cor:SPYdiv} it is reasonable to expect in all cases the SPY bounds may be multiplied
by a factor of $h_K$ and yield divisibilities.\footnote{In fact, we believe that Silverberg's arguments can be easily adapted to yield these strengthenings.  We will revisit this in a later work.}  If so, the two results would combine to give $h_K \ell(\ell-1)^2 \mid d$.  Note that by part (d) this certainly occurs if
$\ff_{\ell} \geq 2$, so the open case is precisely $\ff_{\ell} = 1$.
\end{enumerate}
\end{remarks}

\section{Proof of Theorem \ref{thm:density}: Typical boundedness of $T_{\rm CM}(d)$}
\noindent
We need a result from the part of number theory known as the `anatomy of integers'.

\begin{prop}[Erd\H{o}s--Wagstaff {\cite[Theorem 2]{EW80}}]\label{lem:EW}  For all
$\epsilon > 0$, there is a positive integer $B'_{\epsilon}$ such that the
set of positive integers which are divisible by $\ell-1$ for some
prime $\ell > B'_{\epsilon}$ has upper density at most $\epsilon$.
\end{prop}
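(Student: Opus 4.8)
The set in question is $S_B := \{n \in \Z^+ : (\ell-1)\mid n \text{ for some prime } \ell > B\}$, which shrinks as $B$ grows, so $\overline{\delta}(S_B)$ is nonincreasing in $B$; it therefore suffices to prove that $\overline{\delta}(S_B) \to 0$ as $B \to \infty$. The naive approach --- bounding $\overline{\delta}(S_B)$ by the union bound $\sum_{\ell > B} \frac{1}{\ell-1}$ --- is hopeless, since this series diverges (it is essentially $\sum_{\ell}\frac{1}{\ell}$). In fact the average number of shifted-prime divisors $\ell - 1 > B$ of an integer $n \le x$ is $\asymp \log\log x$, which is unbounded; the content of the proposition is that this mass is carried by a \emph{sparse} set of highly composite $n$, because distinct shifted primes $\ell - 1$ share small factors (all are even, and so on) and hence their sets of multiples overlap enormously. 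Any successful argument must count \emph{distinct} $n$ rather than (prime, multiple) pairs.

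The plan is to split according to the size of the largest prime factor $P(\ell - 1)$ relative to an auxiliary parameter $y$. \textbf{Smooth case.} If $P(\ell - 1) \le y$, then $\ell - 1$ is a $y$-smooth integer exceeding $B$, and here the union bound \emph{does} succeed: the contribution to the upper density is at most
\[ \sum_{\substack{\ell > B,\ \ell \text{ prime} \\ P(\ell-1) \le y}} \frac{1}{\ell - 1} \le \sum_{\substack{m > B \\ m \text{ is } y\text{-smooth}}} \frac{1}{m}, \]
which is a tail of the convergent product $\prod_{p \le y}(1 - 1/p)^{-1}$ and hence tends to $0$ as $B \to \infty$ for fixed $y$. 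Thus for any fixed $y$ this case is $\le \epsilon/2$ once $B$ is large. \textbf{Large-factor case.} If $q := P(\ell - 1) > y$, then $q \mid n$, and after peeling off this large prime factor one is reduced to counting $n' \le x/q$ that are themselves divisible by $(\ell-1)/q$ for a prime $\ell \equiv 1 \pmod{q}$ --- a scaled instance of the original problem over primes in the progression $1 \bmod q$. One bounds this contribution by $\sum_{q > y} \frac{1}{q}\cdot(\text{density of the sub-problem})$ and arranges, via a sieve estimate for the inner count, that the decay in $q$ renders the series both convergent and small with $y$. Finally one chooses $y$ large enough to control the large-factor case for all $B \ge y$, and then $B \ge y$ large enough to control the smooth case.

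The main obstacle is squarely the large-factor case: because of the catastrophic overcounting, one cannot union-bound over the primes $\ell$ --- even within a fixed residue class $1 \bmod q$ the relevant series diverges --- and one must instead produce a genuine \emph{distinct-integer} count. This is where real input is required, either an induction on scale exploiting the self-similar structure noted above, or a direct sieve bound for the number of $n \le x$ possessing a shifted-prime divisor with a large prime factor, controlling the correlations coming from shared small primes. (The complementary very-large-prime range $\ell > x^{1-\delta}$ is by contrast easy: writing $n = (\ell-1)m$ forces $m < x^{\delta}$, and summing over the few admissible cofactors gives a contribution of size $O(\delta)\,x$.) Everything else --- the monotonicity reduction, the convergent-series bookkeeping in the smooth case, and the final choice of parameters --- is routine. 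Alternatively, and as we do in the paper, one simply invokes the Erd\H{o}s--Wagstaff theorem directly.
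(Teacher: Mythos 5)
The paper offers no proof of this proposition at all: it is stated with the attribution to \cite[Theorem 2]{EW80} and used as a black box, so your closing sentence --- that one ``simply invokes the Erd\H{o}s--Wagstaff theorem directly'' --- is exactly the paper's treatment, and to that extent your proposal coincides with it. Your diagnosis of why the statement is not elementary is also on target: the union bound fails because $\sum_{\ell} 1/(\ell-1)$ diverges, the average number of shifted-prime divisors of $n \le x$ is indeed $\asymp \log\log x$, and your smooth case is handled correctly by the tail of the convergent series $\sum_{m\ y\text{-smooth}} 1/m = \prod_{p\le y}(1-1/p)^{-1}$. Your observation about the very large primes $\ell > x^{1-\delta}$ (contribution $O(\delta)x$ by counting cofactors) is also sound.

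Regarded as a proof, however, your sketch has a genuine gap exactly where you admit it does: the large-factor case. The reduction you propose --- bounding that contribution by $\sum_{q > y} \frac{1}{q}\cdot(\text{density of the sub-problem})$ --- cannot work as stated, for two reasons. First, $\sum_{y < q \le x} 1/q \sim \log\log x - \log\log y$ diverges with $x$, so you would need the sub-problem density to decay in $q$; but the natural union-bound estimate for that sub-problem (count $a \le t$ with $qa+1$ prime by the sieve, giving $\ll (q/\varphi(q))\, t/\log t$, then sum $1/a$ by partial summation) yields $\asymp (q/\varphi(q))\log\log x$, which does not decay, and the total becomes $O\bigl(x(\log\log x)^2\bigr)$ --- worse than trivial. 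Second, the ``induction on scale'' is circular as described: the sub-problem (shifted primes restricted to the progression $1 \bmod q$) is not a smaller instance of the original problem, and no parameter decreases, so there is nothing to induct on; one faces the same distinct-integer counting difficulty one level down. Closing this case is the actual content of the Erd\H{o}s--Wagstaff theorem. Since the paper never attempts it either, the honest summary is that your proposal reproduces the paper's approach (citation), prefaced by a correct but incomplete account of why the result is nontrivial; only the citation, not the sketch, should be regarded as the proof.
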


\begin{proof}[Proof of Theorem \ref{thm:density}] Suppose that
\begin{equation}\label{eq:TCMbig} T_{\mathrm{CM}}(d) > B.\end{equation} We will see that if $B$ is a constant chosen sufficiently large in terms of $\epsilon$, then for large $x$ the inequality \eqref{eq:TCMbig} has fewer than $\epsilon x$ solutions $d\le x$.

Choose a degree $d$ number field $F$ and a CM elliptic curve $E_{/F}$ with $\#E(F)[{\rm tors}] > B$. Let $K$ denote the CM field. Suppose to start with that $\#E(F)[{\rm tors}]$ has a prime factor $\ell > B'+1$, where $B' = B'_{\epsilon/24}$, in the notation of Proposition \ref{lem:EW}. Since $\ell$ divides $\#E'(FK)[{\rm tors}]$,  Lemma \ref{lem:div} shows that
\[ \ell-1 \mid w_K \frac{[FK:\Q]}{2} \mid w_K d \mid 12d. \]
Note that $12d \le 12x$. By the definition of $B'$, once $x$ is large, there are fewer than $\frac{\epsilon}{24} \cdot 12x = \frac{\epsilon}{2} x$ possibilities for $12d$, and so also at most $\frac{\epsilon}{2} x$ possibilities for $d$.

Now suppose instead that each prime factor of $\#E(F)[{\rm tors}]$ is at most $B'+1$. Then $\#E(F)[{\rm tors}]$ has at most $r\coloneqq \pi(B'+1)$ distinct prime factors, and so we can choose a prime power $\ell^{\alpha} \parallel \#E(F)[{\rm tors}]$ with
\[ \ell^{\alpha} \ge (\#E(F)[{\rm tors}])^{1/r} > B^{1/r}. \]
Let us impose the restriction that $B \ge (B'+1)^{r}$. Then $\ell^{\alpha} > B'+1 \ge \ell$, and so $\alpha \ge 2$. Applying Lemma \ref{lem:div} in the same manner as above, we find that $12d$ is divisible by either $\ell^{\alpha-2}(\ell^2-1)$, $\ell^{\alpha-1} (\ell-1)$, or $\ell^{\alpha-2}(\ell-1)^2$. Thus, the number of possibilities for $12d$ is bounded by
\begin{align*} 12x\left(\frac{1}{\ell^{\alpha-2}(\ell^2-1)} + \frac{1}{\ell^{\alpha-1}(\ell-1)} + \frac{1}{\ell^{\alpha-2}(\ell-1)^2}\right) &\le 12x\left(\frac{4/3}{\ell^{\alpha}}  + \frac{2}{\ell^{\alpha}} + \frac{4}{\ell^{\alpha}}\right) \\&< 100 \frac{x}{\ell^{\alpha}}. \end{align*}
Now sum on the possible values of $\ell^{\alpha}$. We find that the number of choices for $d$ is at most
\[ 100x\sum_{\substack{\ell^{\alpha} > B^{1/r} \\ \ell \le B'+1 \\ \alpha \ge 2}} \frac{1}{\ell^{\alpha}} = 100x \sum_{\ell \le B'+1} \sum_{\substack{\alpha:\,\alpha \ge 2 \\ \ell^{\alpha} > B^{1/r}}} \frac{1}{\ell^{\alpha}}. \]
The geometric series appearing as the inner sum is at most twice its largest term; this yields an upper bound for the right-hand side of $\frac{200r}{B^{1/r}} x$.
Now impose the additional restriction that $B> (\frac{400r}{\epsilon})^{r}$. Then our upper bound here is smaller than $\frac{\epsilon}{2}x$. Putting this together with the result of the last paragraph finishes the proof.
\end{proof}

\begin{rmk} By a more refined analysis, using techniques recently developed to study the range of Carmichael's $\lambda$-function \cite{LP14, FLP14}, one can establish the following sharpening of Theorem \ref{thm:density}: as $B\to\infty$, the upper and lower densities of $\{n\mid T_{\rm CM}(d) > B\}$ both take the form $(\log{B})^{-\eta+o(1)}$. Here
\[ \eta=1-\frac{1+\log\log{2}}{\log{2}}=0.08607\dots,\] the \emph{Erd\H{o}s--Ford--Tenenbaum constant}. Details will be presented elsewhere.
\end{rmk}

\section{Proof of Theorem \ref{OLSONDEGREETHM}: Characterization of Olson degrees}
\noindent
As already mentioned in the introduction, any group that appears as the torsion subgroup of a CM elliptic curve over a degree $d$ number field also appears over some degree $d'$ number field, for each multiple $d'$ of $d$ (see \cite[Theorem 2.1(a)]{BCS15}). So the set of non-Olson degrees is indeed a set of multiples.

To prove that the set $\Gg$ appearing in the statement of Theorem \ref{OLSONDEGREETHM} is a set of generators, we need the following results from \cite{BCS15}.

\begin{prop}[{\cite[Theorem 4.9]{BCS15}}]\label{prop:realcyclo} Let $F$ be a number field that admits a real embedding, and let $E_{/F}$ be a $K$-CM elliptic curve. If $E(F)$ contains a point of order $n$, then $\Q(\zeta_n) \subset FK$.
\end{prop}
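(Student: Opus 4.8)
The plan is to rephrase the conclusion $\Q(\zeta_n)\subseteq FK$ as the triviality of the mod-$n$ cyclotomic character on $\mathrm{Gal}(\overline{\Q}/FK)$, and to force this triviality using the two key features of the hypotheses: the complex conjugation coming from the real embedding, and the fact that the CM of $E$ is defined over $FK$. First I would reduce to prime-power $n$: since $\Q(\zeta_n)$ is the compositum of the fields $\Q(\zeta_{\ell^k})$ over the prime powers $\ell^k\parallel n$, and since the point of order $n$ yields the $F$-rational point $(n/\ell^k)P$ of order $\ell^k$ for each such $\ell^k$, it suffices to treat $n=\ell^k$.

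Next I would set up the Galois side. Because $F$ admits a real embedding it cannot contain the imaginary quadratic field $K$, so $[FK:F]=2$; I fix an embedding $\overline{\Q}\hookrightarrow\C$ extending the real embedding of $F$ and let $c$ be the induced complex conjugation. Then $c$ fixes $F$ pointwise, acts nontrivially on $K$, and hence represents the nontrivial element of $\mathrm{Gal}(\overline{\Q}/F)/\mathrm{Gal}(\overline{\Q}/FK)$. Write $G=\mathrm{Gal}(\overline{\Q}/FK)$ and let $\rho\colon\mathrm{Gal}(\overline{\Q}/F)\to\mathrm{Aut}(E[n])$ be the mod-$n$ representation. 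Two facts drive the argument: (a) the endomorphisms of $E$ are defined over $FK$, so $\rho(G)$ commutes with the $\O$-action, i.e. lands in the Cartan $\mathrm{Aut}_{\O}(E[n])$; and (b) $c$ conjugates the CM action by the nontrivial automorphism of $K$, that is $\rho(c)[\alpha]\rho(c)^{-1}=[\bar\alpha]$ for $\alpha\in\O$ (equivalently, complex conjugation induces the Rosati involution attached to the Weil pairing $e_n$). Finally, the point $P$ of order $n$ lies in $E(F)$, so it is fixed by all of $\mathrm{Gal}(\overline{\Q}/F)$, in particular by $c$ and by every $g\in G$.

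The heart of the argument is a norm computation, cleanest when $E[n]$ is free of rank one over $\O/n\O$ (e.g. $\O=\O_K$, or $\gcd(n,\ff)=1$ with $\ff$ the conductor). Writing $\rho(g)$ as multiplication by $u_g\in(\O/n\O)^{\times}$ for $g\in G$, the relation $gP=P$ gives $u_g P=P$. Now $cgc^{-1}$ again lies in $G$ and again fixes $P$ (since $c$ and $g$ both fix $P$), while by (b) it acts as multiplication by $\bar u_g$; hence $\bar u_g P=P$ as well. Therefore $N_{K/\Q}(u_g)\,P=u_g\bar u_g P=P$, and because $N_{K/\Q}(u_g)$ is a rational integer while $P$ has order exactly $n$, we get $N_{K/\Q}(u_g)\equiv 1\pmod{n}$. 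But $N_{K/\Q}(u_g)$ is precisely $\det\rho(g)$, which equals the value of the cyclotomic character at $g$. Thus the cyclotomic character is trivial on $G$, so $G$ fixes $\zeta_n$ and $\zeta_n\in FK$, as desired.

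I expect the main obstacle to be the case in which $E[\ell^k]$ fails to be free of rank one over $\O/\ell^k\O$, which can occur exactly when $\ell\mid\ff$. There one must either analyze $E[\ell^k]$ directly as an $\O$-module, or recast the computation intrinsically: Galois acts by symplectic similitudes, so the Weil-pairing adjoint satisfies $\rho(g)^{\dagger}=\det(\rho(g))\,\rho(g)^{-1}$, and fact (b) identifies this adjoint with $\rho(cgc^{-1})$; evaluating the identity $\rho(cgc^{-1})=\det(\rho(g))\,\rho(g)^{-1}$ at the fixed point $P$ again yields $\det\rho(g)\equiv 1\pmod{n}$. Verifying carefully that complex conjugation from the real place genuinely induces the Rosati involution on $E[\ell^k]$ in this non-maximal-order setting is the one point that requires real care; everything else is bookkeeping with the Weil pairing and the field diagram $K\subseteq FK\supseteq F$.
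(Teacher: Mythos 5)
Your argument is correct, and it is worth noting at the outset that the paper itself offers no proof of this proposition: it is imported wholesale from \cite[Theorem 4.9]{BCS15}, so the only fair comparison is with that source, whose argument runs along essentially the same lines as yours (complex conjugation normalizing the Cartan image, plus the identity ``norm $=$ determinant $=$ cyclotomic character'' evaluated against a rational point of exact order $n$). Your main computation is sound: $c$ fixes $F$ and is nontrivial on $K$; $\rho(G)$ lies in $\mathrm{Aut}_{\O}(E[n])$ because all endomorphisms are defined over $FK$; fact (b) is the standard statement that $\mathrm{Gal}(FK/F)$ acts on $\End(E)\cong\O$ by complex conjugation; and the chain $u_g P = P$, $\bar{u}_g P = P$, hence $N_{K/\Q}(u_g)P = P$, hence $\chi_n(g) = \det\rho(g) = N_{K/\Q}(u_g) \equiv 1 \pmod{n}$ is airtight, since it only uses that $P$ has exact order $n$ (not that $P$ generates $E[n]$ as an $\O$-module, which can fail).

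The one substantive correction concerns your anticipated ``main obstacle'': it does not exist. For an $\O$-CM elliptic curve in characteristic zero, the lattice is a \emph{proper} fractional $\O$-ideal $\aa$, and for quadratic orders proper ideals are invertible, hence locally principal; therefore $E[n] \cong \aa/n\aa \cong \O/n\O$ is free of rank one over $\O/n\O$ for \emph{every} $n$, including when $\gcd(n,\ff)>1$. So your primary argument already covers all cases, and the reduction to prime powers is likewise unnecessary (though harmless). This is fortunate, because the fallback you sketch has a genuine gap as stated: the identity $\rho(cgc^{-1}) = \det(\rho(g))\rho(g)^{-1}$ is not a formal consequence of fact (b) together with the similitude property, since (b) only controls conjugation by $\rho(c)$ on the image of $\O$ in $\End(E[n])$, not on an arbitrary element of $\mathrm{Aut}_{\O}(E[n])$; it is precisely freeness that lets you lift $\rho(g)$ to $[\alpha]$ for some $\alpha \in \O$ and thereby identify the conjugate with the Weil-pairing adjoint. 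Since freeness always holds, this defect is moot, but you should replace the hedge with the one-line structural fact above.
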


\begin{prop}[{\cite[Theorem 7.1]{BCS15}}]\label{prop:oddthm} Let $F$ be a number field of odd degree, and let $E_{/F}$ be a CM elliptic curve. Then $E(F)[{\rm tors}]$ is isomorphic to one of the following groups:
\begin{enumerate}
\item the trivial group $\{\bullet\}$, $\Z/2\Z$, $\Z/4\Z$, or $\Z/2\Z \times \Z/2\Z$,
\item the group $\Z/\ell^n \Z$ for a prime $\ell \equiv 3\pmod{8}$ and some positive integer $n$,
\item the group $\Z/2\ell^n \Z$ for a prime $\ell \equiv 3\pmod{4}$ and some positive integer $n$.
\end{enumerate}
Conversely, each of these groups appears as the torsion subgroup of some CM elliptic curve over some odd degree number field.
\end{prop}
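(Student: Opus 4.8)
The plan is to classify $G:=E(F)[\tors]\cong \Z/m\Z\times\Z/n\Z$ (with $m\mid n$) by exploiting two consequences of $d=[F:\Q]$ being odd: since $[K:\Q]=2\nmid d$ we have $K\not\subset F$ and $[FK:\Q]=2d$; and since a number field of odd degree has an odd, hence positive, number of real places, $F$ admits a real embedding, so Proposition \ref{prop:realcyclo} is available. The tools will be the Weil pairing, Proposition \ref{prop:realcyclo}, the divisibilities of Lemma \ref{lem:div} applied over $FK\supset K$, the action of $\End(E)=\O$ on torsion together with the semilinear action of complex conjugation, and genus theory for $h_K$.

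First I would pin down the coarse shape of $G$. If $E[m']\subseteq E(F)$ for some $m'\ge 3$, surjectivity of the Weil pairing gives $\mu_{m'}\subset F$, whence $\varphi(m')\mid d$; as $d$ is odd this is impossible, so full $m'$-torsion occurs only for $m'\le 2$. In particular the odd part of $G$ is cyclic and $m\in\{1,2\}$. Next, applying Proposition \ref{prop:realcyclo} to a point of order $n$ gives $\Q(\zeta_n)\subset FK$, so $\varphi(n)\mid 2d$ and $v_2(\varphi(n))\le 1$. Each odd prime dividing $n$ contributes at least $1$ to $v_2(\varphi(n))$, and $4\mid n$ contributes at least $1$; hence at most one odd prime $\ell$ divides $n$, necessarily with $\ell\equiv 3\pmod 4$, the cyclic $2$-part is at most $\Z/4\Z$ (as $\Z/8\Z$ would force $v_2(\varphi(n))\ge 2$), and $\Z/4\ell^f\Z$ is excluded. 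Thus the only survivors outside the stated list are $\Z/2\Z\times\Z/4\Z$ and $\Z/2\Z\times\Z/2\ell^f\Z$, and it remains to explain the refinement $\ell\equiv 3\pmod 8$ in item (ii).

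The remaining work is the analysis at $2$ and the identification of $K$. Because $\Q(j(E))\subseteq F$ has degree $h_{\O}\mid d$ and $K^{(\O_K)}\subseteq K(\O)\subseteq FK$ has degree $h_K\mid [FK:K]=d$, the class number $h_K$ is odd; by genus theory this means exactly one rational prime ramifies in $K$. If the odd prime $\ell$ appears in $G$, then $\ell$ cannot be inert (Lemma \ref{lem:div}(i) would force $8\mid w_Kd$, impossible since $v_2(w_Kd)\le 2$) nor split (a rational $\ell$-point over $F\not\supset K$ forces the $\sigma$-stable fixed line in the split Cartan to be the diagonal, hence $E[\ell]\subseteq E(FK)$ and $(\ell-1)^2/[U:U_\ell]\mid d$, again impossible for odd $d$ with $\ell\equiv 3\pmod 4$); so $\ell$ ramifies and $K=\Q(\sqrt{-\ell})$. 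Symmetrically, full $2$-torsion over $F$ forces $2$ to ramify in $K$: if $2$ is inert then $\O/2\O\cong\F_4$ and complex conjugation acts as Frobenius, while if $2$ splits the mod-$2$ Cartan is trivial and $\sigma$ swaps the two eigenlines, so in both cases $E(F)[2]$ has rank $\le 1$. This immediately kills $\Z/2\Z\times\Z/2\ell^f\Z$, which would require both $2$ and $\ell$ to ramify, contradicting the uniqueness of the ramified prime. It also forces $K=\Q(i)$ in the pure $2$-power case (the alternative $\Q(\sqrt{-2})$ is eliminated because $w_K=2$ makes Lemma \ref{lem:div}(iv) incompatible with $\Z/2\Z\times\Z/4\Z$), and then the semilinear relation $\sigma(iv)=\overline{i}\,v=-iv$ on the $\sigma$-fixed module $E(F)[4]=E(FK)[4]$ forces $2iv=0$ for all $v$, contradicting the presence of a point of order $4$; this excludes $\Z/2\Z\times\Z/4\Z$. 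Finally the congruence: for $K=\Q(\sqrt{-\ell})$ the prime $2$ is inert exactly when $\ell\equiv 3\pmod 8$ and split exactly when $\ell\equiv 7\pmod 8$. When $2$ splits, the socle of $E[2]$ is $FK$-rational and $\sigma$-fixed (one checks $\bar\epsilon\equiv\epsilon$ in $\O/2\O$), so a $2$-torsion point is \emph{forced} and $\Z/\ell^f\Z$ cannot occur; when $2$ is inert, $E(F)[2]$ may be trivial. Hence $\Z/\ell^f\Z$ occurs only for $\ell\equiv 3\pmod 8$, completing the forward direction, modulo the routine but delicate check of the non-maximal orders, where oddness of $h_{\O}$ restricts the conductor to $v_2(\ff)\le 1$ and the same socle computation (now in $\O/2\O\cong\F_2[\epsilon]/(\epsilon^2)$) applies; the curves with CM by $\Z[i]$ or $\Z[\zeta_3]$ enter only through item (i) and $\ell=3$ and are checked directly.

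For the converse, the groups in item (i) are realised over $\Q$ by Olson's CM curves (and $\Q$ has odd degree $1$), with $\Z/3\Z$ and $\Z/6\Z$ supplying the base cases $\ell=3$ of items (ii) and (iii). For general $\ell\equiv 3\pmod 4$ and arbitrary $n$, I would take a curve with CM by $\O_K$, $K=\Q(\sqrt{-\ell})$, and realise a point of order $\ell^n$ over an odd-degree base field obtained as an appropriate fixed subfield inside the ray class tower $K^{(\ell^n\O_K)}/K$ — this is the sharpness construction underlying Remark (b), cf.\ \cite[Theorem 7.2]{BCS15} — choosing the field so that the image of Galois on $E[2]$ is trivial (giving the extra $2$-torsion, item (iii)) or, when $\ell\equiv 3\pmod 8$, of order $3$ in $\F_4^\times$ (giving no $2$-torsion, item (ii)). \textbf{The main obstacle} is the prime-$2$ analysis of the third paragraph: controlling $E(F)[2]$ through the $\O/2\O$-module structure and the semilinear action of complex conjugation — in particular the ramification dichotomy that pins down $K$ and the exclusions of $\Z/2\Z\times\Z/4\Z$ and $\Z/2\Z\times\Z/2\ell^f\Z$ — together with the bookkeeping for non-maximal orders, and, on the converse side, arranging the constructed base field to have odd degree while carrying precisely the intended $2$-primary torsion.
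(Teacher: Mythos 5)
First, a framing point: the paper does not prove this proposition at all --- it is imported verbatim as \cite[Theorem 7.1]{BCS15} --- so the only fair comparison is with the strategy of that source, and your skeleton does match it. The opening reductions are correct and complete: the Weil pairing forbids full $m'$-torsion for $m'\ge 3$ in odd degree; Proposition \ref{prop:realcyclo} gives $\varphi(n)\mid 2d$, which cuts the candidate list to the stated groups plus $\Z/2\Z\times\Z/4\Z$ and $\Z/2\Z\times\Z/2\ell^f\Z$; $h_{\O}\mid d$ odd plus genus theory pins down exactly one ramified prime in $K$; the parity of $w_K\,[FK:\Q]/2$ via Lemma \ref{lem:div} correctly kills $\ell$ inert ($v_2(\ell^2-1)\ge 3$) and split (though in the split case you should note that the only escape, $w_K=4$, is vacuous because $\ell\equiv 3\pmod 4$ is inert in $\Q(i)$); and the semilinear conjugation action on $E[2]\cong \O/2\O$ --- rank $\le 1$ when $2$ is inert or split, a \emph{forced} rational $2$-torsion point from the socle when $\leg{\Delta_K}{2}=1$, i.e.\ $\ell\equiv 7\pmod 8$ --- is exactly the mechanism behind the $3\pmod 8$ refinement of item (ii) and the exclusion of $\Z/2\Z\times\Z/2\ell^f\Z$. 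I checked that your socle claim $\bar\epsilon\equiv\epsilon$ in $\O/2\O$ does hold for the conductor-$2$ orders in $\Q(\sqrt{-\ell})$, so that part of your non-maximal bookkeeping goes through.

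There are, however, two genuine gaps. (a) In the exclusion of $\Z/2\Z\times\Z/4\Z$, your relation $\sigma(iv)=-iv$ only bites if $iv\in E(F)$, i.e.\ if the asserted equality $E(F)[4]=E(FK)[4]$ holds, which you never justify. It is repairable for $\O=\Z[i]$: the $\O$-submodules of $\Z[i]/4\Z[i]$ containing a copy of $\Z/2\Z\times\Z/4\Z$ are $(1+i)\Z[i]/4\Z[i]$ (order $8$, whence equality by counting) or all of $E[4]$ (killed by Lemma \ref{lem:div}(iv), since $a=b=2$ forces $8\mid 4d$ with $d$ odd). But the argument genuinely fails for the conductor-$2$ order $\O=\Z[2i]$, which has $h_{\O}=1$ odd and so is a live case that your ``$v_2(\ff)\le 1$'' remark does not dispose of: there $i\notin\O$, and with $\epsilon=2i$ the semilinear relation $\sigma(\epsilon v)=-\epsilon v$ yields only $2\epsilon v=0$, which is automatic; moreover Lemma \ref{lem:div}(iv) gives $2^{a+b-1}\mid 4d$, harmless at $a+b=3$, so no degree count rescues you and a separate module argument is required. (b) The converse is only gestured at, and the one tool you cite is inapplicable as stated: the field $L$ of \cite[Theorem 7.2]{BCS15} contains $K^{(\O_K)}\supset K$ and hence has \emph{even} degree. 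One needs genuinely real constructions (as in Proposition \ref{prop:nonolson}, whose degree $\frac{\ell-1}{2}h(\O)$ is odd by genus theory) for every exponent $n$, plus an argument that the torsion is \emph{exactly} the target group --- in particular realizing $\Z/\ell^n\Z$ with trivial $2$-torsion requires arranging mod-$2$ image of order $3$ while keeping the degree odd, and neither the existence nor the exactness is established by your sketch.
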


\begin{prop}[{\cite[Corollary 7.5]{BCS15}}]\label{prop:nonolson} Let $\O$ be an imaginary quadratic order of discriminant $\Delta$, and let $\ell >2$ be a prime dividing $\Delta$. There is a number field $L$ of degree $\frac{\ell-1}{2} \cdot h(\O)$ and an $\O$-CM elliptic curve $E_{/L}$ with an $L$-rational point of order $\ell$.
\end{prop}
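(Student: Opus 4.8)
The plan is to construct $L$ and $E$ directly out of CM theory, arranging that $L$ is a degree $\frac{\ell-1}{2}\,h(\O)$ field over which a canonical cyclic $\ell$-subgroup of the CM curve becomes rational. Since $\ell \mid \Delta$ we have $\leg{\Delta}{\ell}=0$, so $\O/\ell\O$ is a local ring with residue field $\F_\ell$ and nilpotent maximal ideal $\mathfrak m$ (indeed $\mathfrak m^{2}=0$). Writing $\lambda$ for the prime of $\O$ above $\ell$, the $\lambda$-torsion $C\coloneqq E[\lambda]$ is the unique $\O$-submodule of $E[\ell]\cong \O/\ell\O$ of order $\ell$. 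As $\O$ is stable under complex conjugation, $C$ is singled out purely by its $\O$-module structure, hence is stable under the full absolute Galois group of any field of definition of $E$ -- crucially, whether or not that field contains $K$. This is what will let us work below $K$ and produce a field $L\not\supseteq K$, as the SPY bound $\varphi(\ell)\le \#\O^{\times}[L:\Q]$ demands once $h(\O)$ is small.

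First I would fix an $\O$-CM elliptic curve $E$ over $F_0\coloneqq \Q(j(E))$; since the ring class polynomial of $\O$ is irreducible over $\Q$ of degree $h(\O)$, we have $[F_0:\Q]=h(\O)$. Let $\chi\colon G_{F_0}\to \mathrm{Aut}(C)=\F_\ell^{\times}$ be the character describing the Galois action on $C$, let $P$ generate $C$, and set $L\coloneqq F_0\bigl(x(P)\bigr)$. The heart of the argument is to show $[L:F_0]=\frac{\ell-1}{2}$, equivalently that $\chi$ is surjective. For this I would invoke Shimura reciprocity over the ring class field $K(\O)=K\cdot F_0$: the field generated over $K(\O)$ by the Weber-function ($x$-coordinate) values of $E[\lambda]$ is the ray class field $K^{(\lambda)}$, and
\[ [K^{(\lambda)}:K(\O)]=\frac{\#(\O/\lambda)^{\times}}{[\O^{\times}:U_\lambda]}=\frac{\ell-1}{\#\O^{\times}}=\frac{\ell-1}{2}, \]
using that $-1\not\equiv 1\pmod{\lambda}$ for odd $\ell$. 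Thus $\chi$ already has full image $\F_\ell^{\times}$ on the subgroup $G_{K(\O)}\le G_{F_0}$, hence on $G_{F_0}$; therefore $[F_0(P):F_0]=\ell-1$ and $[L:F_0]=\frac{\ell-1}{2}$, giving $[L:\Q]=\frac{\ell-1}{2}\,h(\O)$. (Comparing $[K(\O)(P):F_0(P)]=2$ shows $K\not\subseteq F_0(P)\supseteq L$, as expected.)

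It remains to upgrade the rational $x$-coordinate to a genuine rational point. Since $x(P)\in L$, every $\sigma\in G_L$ sends $P$ to $\pm P$, defining a quadratic character $\psi\colon G_L\to\{\pm1\}$; I would then replace $E$ by its quadratic twist $E'$ by $\psi$. The twisting isomorphism carries $P$ to an $L$-rational point of order $\ell$ on $E'$, and since $j(E')=j(E)\in L$ the curve $E'$ still has $\O$-CM. This produces the desired $E'_{/L}$.

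The step I expect to be the main obstacle is the exact degree computation when $\O$ is non-maximal at $\ell$, i.e.\ $\ell\mid \ff$: there $\lambda$ is not an invertible ideal, so the clean Shimura statement for $K^{(\lambda)}$ must be replaced by its ring-class analogue (or derived by descending along an $\O_K$-to-$\O$ isogeny), and one must check that the unit index $[\O^{\times}:U_\lambda]=2$ and the value $\frac{\ell-1}{2}$ survive intact. The remaining loose ends are the finitely many cases with $\#\O^{\times}>2$, which force $K=\Q(\sqrt{-3})$ and $\ell=3$ (so $j(E)=0$ and $\frac{\ell-1}{2}=1$); there the extra automorphisms and unit index degenerate and are best checked directly.
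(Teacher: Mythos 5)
Two preliminary remarks: the paper itself contains no proof of this proposition --- it is imported verbatim from \cite[Corollary 7.5]{BCS15} --- so your attempt can only be judged on its own merits. Your overall architecture (the kernel $C=E[\lambda]$ is the unique $\O$-submodule of $E[\ell]$ of order $\ell$, is fixed by complex conjugation, hence is Galois-stable over $F_0=\Q(j(E))$; compute the degree of the $x$-coordinate field; then pass to a quadratic twist to promote $x(P)\in L$ to an $L$-rational point) is sound, and the twisting step at the end is correct. But there are two genuine gaps.

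First, the inference ``$[K^{(\lambda)}:K(\O)]=\frac{\ell-1}{2}$, hence $\chi$ has full image $\F_\ell^{\times}$ on $G_{K(\O)}$'' is a non sequitur, and the intermediate claim $[F_0(P):F_0]=\ell-1$ can actually be false. The Weber function is model-independent (invariant under twisting and under $\mathrm{Aut}(E)=\{\pm 1\}$), so the ray class field degree controls only the image of $\chi$ modulo $\{\pm 1\}$; the image of $\chi$ itself depends on which model $E_{/F_0}$ you fixed at the outset (twisting by a quadratic character $\psi$ replaces $\chi$ by $\chi\psi$), and when $\ell\equiv 3\pmod{4}$ it can be the index-two subgroup of squares, which omits $-1$. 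Your final conclusion nevertheless survives: writing $H=\mathrm{im}(\chi)$ and $H'=\mathrm{im}(\chi|_{G_{K(\O)}})$, one has $[F_0(x(P)):F_0]=\#H/\#(H\cap\{\pm 1\})$, and the constraint $\#H'/\#(H'\cap\{\pm1\})=\frac{\ell-1}{2}$ forces either $H=\F_\ell^\times$, or $H=\{\text{squares}\}$ with $\ell\equiv 3\pmod 4$ so that $-1\notin H$; in both cases the ratio is $\frac{\ell-1}{2}$. (Equivalently: what must be verified is exactly that $K\not\subseteq F_0(x(P))$, and this holds in both cases.) So the step is repairable, but as written the heart of your degree computation rests on a false deduction.

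Second, and more seriously, the non-maximal orders are simply not treated. You defer $\ell\mid\ff$ as ``the main obstacle,'' but that case --- and in fact already the case $\ff>1$ with $\ell\nmid\ff$, where $K(\O)$ is a ring class field rather than the Hilbert class field, so the classical identity $K(\O)\bigl(h(E[\lambda])\bigr)=K^{(\lambda)}$ you invoke is no longer the classical Main Theorem of CM --- is the substance of the proposition: it is precisely where $h(\O)$ differs from $h_K$. Completing it requires either the extension of CM theory to non-maximal orders (ray/ring class fields of orders, \`a la S\"ohngen), or a reduction along the canonical isogeny $E\to E/C$ to a curve with CM by a larger order, together with nontrivial bookkeeping comparing $h(\O)$ with the class number of that order and tracking the kernel point; that machinery is exactly what \cite{BCS15} supplies. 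As it stands, your proposal (after the repair above) proves the proposition only for $\O=\O_K$ with $j\neq 0$, plus the directly checked case $\Delta=-3$.
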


\begin{proof}[Proof of Theorem \ref{OLSONDEGREETHM}]
First we verify that any $d \in \Gg$ is non-Olson.  By \cite[Theorem 1.4]{BCS15},  $2$ is a non-Olson degree.  It remains to consider $d=\frac{\ell-1}{2} \cdot h_{\Q(\sqrt{-\ell})}$ for a prime $\ell>3$ with $\ell \equiv 3 \pmod{4}$. Let  $K=\Q(\sqrt{-\ell})$. By Proposition \ref{prop:nonolson}, there is an $\Ok$-CM elliptic curve $E$ defined over a number field $L$ of degree $\frac{\ell-1}{2} \cdot h_{\Q(\sqrt{-\ell})}$ such that $E(L)$ contains a point of order $\ell$. Thus $E(L)[\tors]$ is not an Olson group and $d$ is a non-Olson degree.

Next, we suppose $d$ is a non-Olson degree and show $d\in M(\Gg)$. There is an elliptic curve $E$ defined over a number field $F$ of degree $d$ for which $E(F)[\tors]$ is not an Olson group. Since $2 \in \Gg$, we may assume that $d$ is odd and hence that $F$ admits a real embedding.

By Proposition \ref{prop:oddthm}, $E(F)$ contains a point of prime order $\ell$ where $\ell \equiv 3 \pmod{4}$. By Proposition \ref{prop:realcyclo}, $\Q(\zeta_\ell) \subset FK$, where $K$ is the CM field. Thus, $FK$ contains the quadratic subfield $\Q(\sqrt{-\ell})$ of $\Q(\zeta_\ell)$.  Since $4\nmid [FK:\Q]$, the field $FK$ can contain only one quadratic subfield, and so $K= \Q(\sqrt{-\ell})$.

Suppose first that $\ell > 3$. Then Lemma \ref{lem:div} shows that $h_K \cdot (\ell-1) \mid w_K \frac{[FK:\Q]}{2} = 2 d$. Thus $h_K \cdot \frac{\ell-1}{2}\mid d$ and $d \in M(\Gg)$. Now suppose $\ell = 3$.  Since $E(F)[{\rm tors}]$ is not Olson, it must have a point of order $9$.  By Proposition \ref{prop:realcyclo}, $\Q(\zeta_{9}) \subset FK$. Thus $6 \mid [FK:\Q]=2d$, so $3\mid d$. But $3=\frac{7-1}{2} \cdot h_{\Q(\sqrt{-7})}$, so again $d \in M(\Gg)$.
\end{proof}

\section{Proof of Theorem \ref{thm:density2}: Olson degrees have positive density}
\noindent
Theorem \ref{thm:density2} follows from Theorem \ref{OLSONDEGREETHM} together with the following  elementary result from the theory of sets of multiples.

\begin{lem}\label{lem:hall} Let $\Gg \subset \Z^+$. If $\sum_{g \in \Gg}\frac{1}{g} < \infty$, then $M(\Gg)$ has an asymptotic density. If moreover $1\notin\Gg$, then the density of $M(\Gg)$ is strictly less than $1$.
\end{lem}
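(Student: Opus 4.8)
The plan is to reduce to finite generating sets by truncation and then squeeze. For $y > 0$ set $\Gg_y = \{g \in \Gg : g \le y\}$, a finite set. Since $M(\Gg_y)$ is a finite union of residue classes (the multiples of each $g \in \Gg_y$), it is periodic modulo $\operatorname{lcm}(\Gg_y)$ and hence has a genuine asymptotic density $\delta_y := \delta(M(\Gg_y))$. As $y$ increases we enlarge the generating set, so the sets $M(\Gg_y)$ increase, and the numbers $\delta_y$ are nondecreasing and bounded by $1$; let $L = \lim_{y \to \infty}\delta_y$.

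First I would prove the density exists by bounding $M(\Gg)$ between $M(\Gg_y)$ and $M(\Gg_y)$ plus a small tail. On the one hand $M(\Gg_y) \subseteq M(\Gg)$, so $\underline{\delta}(M(\Gg)) \ge \delta_y$ for every $y$, whence $\underline{\delta}(M(\Gg)) \ge L$. On the other hand $M(\Gg) = M(\Gg_y) \cup M(\Gg \setminus \Gg_y)$, and a union bound gives
\[ \#\big(M(\Gg\setminus \Gg_y) \cap [1,x]\big) \le \sum_{\substack{g \in \Gg \\ g > y}} \Big\lfloor \frac{x}{g}\Big\rfloor \le x \sum_{\substack{g\in\Gg \\ g>y}}\frac 1g. \]
Hence $\overline{\delta}(M(\Gg \setminus \Gg_y)) \le \sum_{g>y}1/g =: \epsilon_y$, and the convergence of $\sum_{g\in\Gg}1/g$ forces $\epsilon_y \to 0$ as $y \to \infty$. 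Subadditivity of upper density then yields $\overline{\delta}(M(\Gg)) \le \delta_y + \epsilon_y$ for all $y$; letting $y \to\infty$ gives $\overline{\delta}(M(\Gg)) \le L$. Combining, $L \le \underline{\delta}(M(\Gg)) \le \overline{\delta}(M(\Gg)) \le L$, so the density exists and equals $L$. This part is routine; the only input is the convergence of the series, which controls the tail.

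For the strict inequality when $1 \notin \Gg$, the key is to bound the density of the complement away from $0$ uniformly in $y$. Here I would invoke the Heilbronn--Rohrbach correlation inequality: for any finite set of integers $\ge 2$, the density of integers divisible by none of them is at least the product of the $(1 - 1/g)$. Applying this to $\Gg_y$ gives $1 - \delta_y = \delta(M(\Gg_y)^{c}) \ge \prod_{g \in \Gg_y}(1 - 1/g) \ge \prod_{g\in\Gg}(1-1/g)$. Since $1 \notin \Gg$, every $g \ge 2$, and because $\sum_{g}1/g < \infty$ (together with $1/(g-1) \le 2/g$ for $g \ge 2$) the infinite product $c := \prod_{g\in\Gg}(1-1/g)$ converges to a strictly positive value. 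Thus $\delta_y \le 1 - c$ for all $y$, and passing to the limit gives $\delta(M(\Gg)) = L \le 1 - c < 1$. I expect this last step to be the main obstacle, since it rests on the correlation inequality rather than a naive union or inclusion--exclusion bound (the latter being useless once $\sum 1/g$ exceeds $1$); the Heilbronn--Rohrbach inequality, or equivalently an FKG-type argument showing that divisibility by a new modulus is negatively correlated with avoiding the previous ones, is exactly what makes the positivity of the product available.
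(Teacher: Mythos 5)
Your proof is correct, but note that the paper does not actually prove this lemma: it simply cites Theorem 0.1 and Corollary 0.10 in Chapter 0 of Hall's monograph \emph{Sets of Multiples}, so what you have written is the argument the authors outsource. Your handling of the first assertion --- truncating to the finite set $\Gg_y$, observing that $M(\Gg_y)$ is periodic modulo $\operatorname{lcm}(\Gg_y)$ and so has a density $\delta_y$, and squeezing $M(\Gg)$ between $M(\Gg_y)$ and a tail of upper density at most $\sum_{g>y}1/g \to 0$ --- is the standard proof of the existence statement and is complete as written. For the strict inequality you correctly identify the crux: no union or inclusion--exclusion bound can give a positive lower bound on the density of the complement, and one needs the Heilbronn--Rohrbach inequality $1-\delta(M(\Gg_y)) \ge \prod_{g\in\Gg_y}(1-1/g)$, whose right-hand side stays above the convergent infinite product $c=\prod_{g\in\Gg}(1-1/g)>0$ (positive precisely because all $g\ge 2$ and $\sum 1/g<\infty$). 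This is exactly the ingredient behind Hall's Corollary 0.10; in Hall's book it is obtained from Behrend's inequality $1-\delta(M(\Aa\cup\Bb)) \ge (1-\delta(M(\Aa)))(1-\delta(M(\Bb)))$, the FKG-type correlation statement you allude to, applied one generator at a time. So in substance your route coincides with that of the cited source; the one caveat is that Heilbronn--Rohrbach is itself a nontrivial classical theorem, so your write-up is a genuine proof only modulo that input --- which puts it on the same logical footing as the paper's citation, while having the merit of making visible exactly where the difficulty lies.
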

\begin{proof}
See Theorem 0.1 and Corollary 0.10 in Chapter 0 of Hall's monograph \cite{hall96}.
\end{proof}
We can now prove Theorem \ref{thm:density2}.

\begin{proof}[Proof of Theorem \ref{thm:density2}] In view of Lemma \ref{lem:hall}, it suffices to show that $\sum_{g \in \Gg}\frac{1}{g}<\infty$, where $\Gg$ is the set defined in Theorem \ref{OLSONDEGREETHM}. Siegel's theorem (see for instance \cite[p. 124]{IK04}) implies that for each $\epsilon > 0$,
\[ \frac{\ell-1}{2} \cdot h_{\Q(\sqrt{-\ell})} \gg_{\epsilon} \ell^{3/2-\epsilon}. \]
Fixing any $\epsilon < \frac{1}{2}$, we obtain the desired convergence. Alternatively, the work of Goldfeld--Gross--Zagier yields an effective lower bound $\frac{\ell-1}{2} \cdot h_{\Q(\sqrt{-\ell})} \gg_{\epsilon} \ell (\log{\ell})^{1-\epsilon}$ (see \cite[p. 540]{IK04}). Now fixing $\epsilon \in (0,1)$, partial summation along with the prime number theorem gives that $\sum_{\ell}\frac{1}{\ell (\log{\ell})^{1-\epsilon}} < \infty$. 	
\end{proof}

\begin{rmk} By another appeal to Proposition \ref{lem:EW}, one can prove Theorem \ref{thm:density2} without using any lower bounds on $h_{\Q(\sqrt{-\ell})}$. Compare with the proof of \cite[Theorem 4]{PS88}.
\end{rmk}

\section{Proof of Theorem \ref{thm:primepower}: Prime power Olson degrees}

\begin{proof}[Proof of Theorem \ref{thm:primepower}]If $p\le 5$, then $p$ and its powers are non-Olson degrees, so we assume that $p \ge 7$. Suppose that $p^n$ is not an Olson degree. From the classification of Olson degrees (Theorem \ref{OLSONDEGREETHM}), there is a prime $\ell > 3$ with $\ell \equiv 3\pmod{4}$ for which $\frac{\ell-1}{2} \cdot h_{\Q(\sqrt{-\ell})} \mid p^n$. Hence, there are integers  $r \ge 1$ and $s\ge 0$ with $r+s \le n$,
\[ \frac{\ell-1}{2} = p^r,\quad\text{and}\quad h_{\Q(\sqrt{-\ell})} = p^s. \]
We argue that $p$ is  bounded (ineffectively) in terms of $n$.
By Siegel's theorem, if $p$ is large in terms of $n$, then $h_{\Q(\sqrt{-\ell})} > \ell^{\frac{1}{2}-\frac{1}{3n}} > p^{\frac{r}{2}-\frac{1}{3}}$.
Using the elementary explicit upper bound
\begin{equation}\label{eq:hboundLP} h_{\Q(\sqrt{-\ell})} \le \ell^{1/2} \log\ell, \end{equation}
(see, e.g., \cite[\S2]{LP92}) we find that for $p$ large enough in terms of $n$, we also have $h_{\Q(\sqrt{-\ell})} < p^{\frac{r}{2}+\frac{1}{3}}$. Thus, $p^{-1/3} < p^{s-\frac{r}{2}} < p^{1/3}$. Since $s-r/2$ is an integer or half-integer, we must have $s=r/2$. In particular, $r=2s$ is even. But then $\ell = 2p^{2s}+1 \equiv 0\pmod{3}$, contradicting that $\ell > 3$.\end{proof}

\begin{rmk} For general $n$, the ineffectivity of Siegel's theorem prevents us from giving a
concrete bound on the largest non-Olson prime power $p^n$. However, as we explain below, the above argument can be made effective when $n=1,2$, or $3$. In this way, we obtain a simple proof that $p^n$ is Olson for every $p > 5$. (Recall that when $n=1$, this was proved already in \cite{BCS15}.)

Given a counterexample, choose $\ell, r$, and $s$ as in the above proof. As before, working modulo $3$ shows that $r$ is odd. To finish the proof, it suffices to prove that $s=0$, i.e., $h_{\Q(\sqrt{-\ell})} = 1$.
To see that this is enough, notice that $\ell = 2p+1$ or $2p^3+1$, where $p > 5$, so that $\ell > 11$. Now if $K$ is an imaginary quadratic field with $h_K=1$, an elementary argument shows that every prime smaller than $\frac{1+|\Delta_K|}{4}$ is inert in $K$. In particular, $3$ is inert in $\Q(\sqrt{-\ell})$, forcing $3 \mid \ell-1$ and thus $3\mid p$. But this contradicts that $p > 5$.

Now we prove that $s=0$. If $r=3$, the inequality $r+s \le 3$ immediately forces $s=0$. If $r=1$, so that $\ell=2p+1$, then \eqref{eq:hboundLP} implies that $s=0$ for all $p\ge 41$. For $5 < p < 41$, we check directly that there is no case where $\ell=2p+1$ is prime and $h_{\Q(\sqrt{-\ell})}$ is a power of $p$.
\end{rmk}

\section{Proof of Theorem \ref{thm:avg0}: Averages of $T_{\rm CM}(d)$}\label{sec:average}
\subsection{The average over odd $d$}\noindent Since the results for odd $d$ are easier to obtain, we start there.

\begin{proof}[Proof of the upper bound in Theorem \ref{thm:avg0}(ii)] Recall
that $T_{\CM}(d) \geq 6$ for all positive integers $d$.  Thus, from Proposition \ref{prop:oddthm}, we may assume that $T_{\rm CM}(d) = \ell^\alpha$ or $2\ell^\alpha$ for some prime $\ell \equiv 3\pmod{4}$ and some positive integer $\alpha$.

For any curve achieving the maximum indicated by $T_{{\rm CM}}(d)$, the CM field must be $\Q(\sqrt{-\ell})$, for the same reason as in the proof of Theorem \ref{OLSONDEGREETHM}. Now we apply Lemma \ref{lem:div} to bound the number of possible values of $d\le x$, given that $\ell^{\alpha}$ divides $\#E(F)[{\rm tors}]$.
By a calculation similar to that seen in the proof of Theorem \ref{thm:density}, the number of such $d$ is at most $100\frac{x}{h_{\Q(\sqrt{-\ell})} \cdot\ell^{\alpha}}$. So given $\ell^\alpha$, the contribution to $\sum_{d \le x,~2\nmid d} T_{{\rm CM}}(d)$ from these $d$ is at most $100\frac{x}{h_{\Q(\sqrt{-\ell})} \cdot\ell^{\alpha}} \cdot 2\ell^{\alpha}=200 x/h_{\Q(\sqrt{-\ell})}$.

We now sum on the possibilities for $\ell^{\alpha}$. Since $\ell^\alpha\le 100x$, there are $O(\log{x})$ possible values of $\alpha$.  Moreover, the only values of $\ell$ that can occur are those with $\ell \cdot h_{\Q(\sqrt{-\ell})} \le 100 x$. Fix a small $\epsilon > 0$. Recalling Siegel's lower bound $h_{\Q(\sqrt{-\ell})} \gg \ell^{1/2-\epsilon}$, we find that $\ell \le x^{2/3+\epsilon}$ (assuming $x$ is sufficiently large). Hence,
\[ \sum_{\ell^{\alpha}} 200 \frac{x}{h_{\Q(\sqrt{-\ell})}} \ll x\log{x}  \sum_{\ell \le x^{2/3+\epsilon}} \frac{1}{\ell^{1/2-\epsilon}} \ll x \log{x} \cdot (x^{2/3+\epsilon})^{1/2+\epsilon} \ll x^{4/3+2\epsilon}.  \]
Since $\epsilon$ may be taken arbitrarily small, the upper bound follows.\end{proof}

\begin{proof}[Proof of the lower bound in Theorem \ref{thm:avg0}(ii)] Here the main difficulty is the need to avoid double counting.

Fix a small $\epsilon > 0$. For large $x$, let $Y = x^{2/3-\epsilon}$,  and let $\Pp_0$ be the set of primes $\ell \equiv 3\pmod{4}$ belonging to $[Y,2Y]$. Then $\#\Pp_0\gg Y/\log Y$. We prune the set $\Pp_0$ as follows. Let $\ell_1$ be any element of $\Pp_0$. Remove from $\Pp_0$ all $\ell$ for which $\frac{\ell-1}{2} \mid \frac{\ell_1-1}{2} \cdot h_{\Q(\sqrt{-\ell_1})}$. Now let $\ell_2$ be any remaining element, and remove all $\ell$ for which $\frac{\ell-1}{2} \mid \frac{\ell_2-1}{2} \cdot h_{\Q(\sqrt{-\ell_2})}$. We continue in the same way until all elements of $\Pp_0$ are exhausted. Let $\Pp$ be the set $\ell_1, \ell_2, \ell_3, \dots$. The maximal order of the divisor function (see \cite[Theorem 315, p. 343]{HW08}) shows that the number of primes removed at each step in the construction of $\Pp$ is smaller than $x^{\epsilon/2}$, and so $\#\Pp \ge x^{2/3-2\epsilon}$.

By construction, as $\ell$ ranges over $\Pp$, the products $\frac{\ell-1}{2} \cdot h_{\Q(\sqrt{-\ell})}$
are all distinct. By genus theory, all of these products are odd. Since $\ell \le 2Y$ and $h_{\Q(\sqrt{-\ell})} \le \ell^{1/2}\log{\ell}$, we find that each $\frac{\ell-1}{2} \cdot h_{\Q(\sqrt{-\ell})} \le x$. Putting all of this together with Proposition \ref{prop:nonolson},
\[ \sum_{\substack{d \le x \\ 2\nmid d}} T_{\rm CM}(d) \ge \sum_{\ell \in \Pp} T_{\rm CM}\bigg(\frac{\ell-1}{2} \cdot h_{\Q(\sqrt{-\ell})}\bigg) \ge \sum_{\ell \in \Pp} \ell \ge Y \cdot \#\Pp \ge x^{4/3-3\epsilon}. \]
Since $\epsilon$ can be taken arbitrarily small, we obtain the lower bound.
\end{proof}

\subsection{The unrestricted average} We will use the following result.

\begin{prop}[{\cite[Theorem 1(a)]{CCS13}}]\label{prop:CCS1A} For every prime $\ell\equiv 1\pmod{3}$, there is an elliptic curve $E$ with $j(E)=0$ over a number field $F$ of degree $\frac{\ell-1}{3}$, with $E(F)$ containing a point of order $\ell$.\end{prop}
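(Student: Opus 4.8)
The plan is to realize the desired point over a ray class field of $K := \Q(\sqrt{-3}) = \Q(\zeta_3)$. First I would record the relevant invariants of the $j=0$ case: an elliptic curve with $j = 0$ has $\O_K$-CM with $K = \Q(\zeta_3)$, and here $h_K = 1$, $w_K = 6$, $\mu_6 \subset K$, and $K = K^{(\O_K)}$ is its own Hilbert class field. Since $\ell \equiv 1 \pmod 3$ is equivalent to $\leg{-3}{\ell} = 1$, the prime $\ell$ splits in $K$, say $\ell \O_K = \mathfrak{l}\bar{\mathfrak{l}}$. The target field will be the ray class field $F := K^{(\mathfrak{l})}$, so I would first check it has the claimed degree. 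From the exact sequence computing the ray class group, $\mathrm{Gal}(K^{(\mathfrak{l})}/K) \cong (\O_K/\mathfrak{l})^\times/\overline{\O_K^\times}$, and since $\ell > 3$ the units $\mu_6$ inject into $(\O_K/\mathfrak{l})^\times \cong \F_\ell^\times$ with image of order $6$ (note $6 \mid \ell-1$, as $\ell$ is an odd prime $\equiv 1 \pmod 3$). Hence $[K^{(\mathfrak{l})}:K] = (\ell-1)/6$ and $[K^{(\mathfrak{l})}:\Q] = (\ell-1)/3$.

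Next I would pin down the Galois action on $\mathfrak{l}$-torsion. Fix a $j=0$ model $E_0/\Q$. Because the CM is defined over $K$, the cyclic group $E_0[\mathfrak{l}]$ is $G_K$-stable, and $G_K$ acts on it through a character $\psi\colon G_K \to (\O_K/\mathfrak{l})^\times = \F_\ell^\times$. I claim $\psi$ is surjective. Writing $\psi'$ for the analogous character on $E_0[\bar{\mathfrak{l}}]$, the Weil pairing gives $\psi\psi' = \chi_{\mathrm{cyc}}$, which is surjective because $[K(\zeta_\ell):K] = \ell-1$; and since $E_0$ is defined over $\Q$, complex conjugation interchanges $\mathfrak{l}$ and $\bar{\mathfrak{l}}$ and so conjugates $\psi$ to $\psi'$, whence $\psi$ and $\psi'$ have equal image $H \le \F_\ell^\times$. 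As $\psi\psi'$ takes values in $H$ and is onto, $H = \F_\ell^\times$. Thus a generator $P$ of $E_0[\mathfrak{l}]$ has $[K(P):K] = \ell-1$, while the Weber value $x(P)^3$ (which is $\mu_6 = \mathrm{Aut}(E_0)$-invariant) satisfies $[K(x(P)^3):K] = (\ell-1)/6$; by the main theorem of complex multiplication this recovers $K(x(P)^3) = K^{(\mathfrak{l})}$.

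The crux is to pass from the Weber coordinate to an honest point of order $\ell$ rational over $K^{(\mathfrak{l})}$, and this is where twisting enters. Over $K^{(\mathfrak{l})}$ the subgroup $E_0[\mathfrak{l}]$ is rational and $\mathrm{Gal}(\bar K/K^{(\mathfrak{l})}) = \psi^{-1}(\mu_6)$ acts on it through $\psi$ with image exactly $\mu_6$. Since $j = 0$, I may twist $E_0$ by a sextic character: twisting by the Kummer character $\eta_D$ of some $D \in K^\times$ (all such characters arise, as $\mu_6 \subset K$) replaces $\psi$ by $\psi\cdot\eta_D$, where $\eta_D$ is read in $\mu_6 = \mathrm{Aut}(E_0) \subset \F_\ell^\times$. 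I would choose $D$ so that $\psi\eta_D$ restricts trivially to $\mathrm{Gal}(\bar K/K^{(\mathfrak{l})})$; then a generator of $E_D[\mathfrak{l}]$ is $K^{(\mathfrak{l})}$-rational, finishing the proof with $F = K^{(\mathfrak{l})}$ and $E = E_D$.

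The main obstacle is the existence of this twist: one must extend the prescribed $\mu_6$-valued character $(\psi|_{\mathrm{Gal}(\bar K/K^{(\mathfrak{l})})})^{-1}$ to a global sextic character of $G_K$. Via inflation--restriction for $\mathrm{Hom}(-,\mu_6)$ with $Q := \mathrm{Gal}(K^{(\mathfrak{l})}/K)$, this amounts to the vanishing of a transgression class in $H^2(Q,\mu_6)$; the character is $Q$-invariant because $\psi$ factors through $G_K^{\mathrm{ab}}$, and the obstruction must be controlled using that $\psi$ itself furnishes a global $\F_\ell^\times$-valued extension (the subtlety being acute exactly when $\gcd(6,(\ell-1)/6) > 1$, e.g.\ $\ell = 13$). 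I expect this cohomological vanishing --- equivalently, the explicit production of the twisting element $D$, which is what \cite{CCS13} carries out --- to be the genuinely delicate point; everything else is bookkeeping with the main theorem of complex multiplication and the SPY framework developed in \cite{BCS15} and in Theorem \ref{BIGSPY} above.
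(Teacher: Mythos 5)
First, a point of reference: the paper never proves this proposition --- it is imported wholesale from \cite[Theorem 1(a)]{CCS13} --- so your attempt can only be measured against the standard twisting argument, which is what that reference carries out and what your first two paragraphs correctly set up. Indeed, your degree computation $[K^{(\mathfrak{l})}:\Q] = (\ell-1)/3$, your proof that $\psi$ is surjective, and your identification of $K^{(\mathfrak{l})}$ as the fixed field of $N \coloneqq \psi^{-1}(\mu_6)$ via the Weber function are all correct. The gap is in your final step, and it is not a gap that can be filled: you insist on twisting by some $D \in K^\times$, i.e.\ on finding a character $\eta\colon G_K \to \mu_6$ of the \emph{full} group $G_K$ with $(\psi\eta)|_N = 1$, and such an $\eta$ provably does not exist whenever $\gcd\bigl(6,(\ell-1)/6\bigr) > 1$, e.g.\ for $\ell = 13, 19, 37, 61$. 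Indeed, if $(\psi\eta)|_N = 1$, then $\psi\eta$ factors through $G_K/N \cong \Z/m\Z$ with $m = (\ell-1)/6$, so $(\psi\eta)^m = 1$, i.e.\ $\psi^m = \eta^{-m}$. But by the surjectivity of $\psi$ --- which you yourself proved --- the image of $\psi^m$ is the full order-$6$ subgroup $\mu_6 \subset \F_\ell^\times$, whereas $\eta^{-m}$ takes values in the group of $m$-th powers of $\mu_6$, which is $\mu_{6/\gcd(6,m)}$, a proper subgroup when $\gcd(6,m)>1$. So the transgression class in $H^2(\mathrm{Gal}(K^{(\mathfrak{l})}/K),\mu_6)$ that you hoped to show vanishes is genuinely nonzero for these $\ell$; your suspicion about $\ell = 13$ was well founded, and no refinement of the cohomological argument can rescue this route.

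The repair is far easier than the obstruction, because you imposed a requirement the proposition never asks for: the curve need only be defined over $F \coloneqq K^{(\mathfrak{l})}$, not over $K$, so the twisting datum may be taken in $F^\times$ rather than $K^\times$. By your own second paragraph, $\psi(G_F) \subseteq \mu_6$, so $\psi|_{G_F}$ is a continuous $\mu_6$-valued character of $G_F$; since $\mu_6 \subset K \subset F$, Kummer theory \emph{over $F$} supplies $D \in F^\times$ with $\eta_D = \psi|_{G_F}$ as characters of $G_F$, and no extension problem ever arises. The sextic twist $E$ of $E_0$ by $D$ is then defined over $F$, has $j(E)=0$, and its mod-$\mathfrak{l}$ character is $(\psi|_{G_F})\cdot\eta_D^{-1} = 1$ (replace $D$ by $D^{-1}$ if your sign convention for twisting differs), so $E[\mathfrak{l}] \subset E(F)$ and $E(F)$ contains a point of order $\ell$ over the degree-$\frac{\ell-1}{3}$ field $F$. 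With this single change --- twist over $F$, not over $K$ --- your argument is complete and is essentially the proof in \cite{CCS13}.
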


\begin{proof}[Proof of the lower bound in Theorem \ref{thm:avg0}(i)]  Immediately from Proposition \ref{prop:CCS1A},
\[ \sum_{d \le x}T_{\rm CM}(d) \ge \sum_{\substack{x < \ell \le 3x \\\ell \equiv 1\pmod{3}}} T_{\rm CM}\bigg(\frac{\ell-1}{3}\bigg) \ge \sum_{\substack{x < \ell \le 3x \\\ell \equiv 1\pmod{3}}}\ell \ge x \sum_{\substack{x < \ell \le 3x \\ \ell \equiv 1\pmod{3}}} 1 \gg \frac{x^2}{\log{x}}. \qedhere \]
\end{proof}
\noindent
The proof of the upper bound is considerably more intricate. The needed methods are similar to those used by Erd\H{o}s to estimate the counting function of the range of the Euler $\varphi$-function \cite{erdos35}. To continue, we need two further `anatomical' results.

\begin{lem}\label{lem:HRHT}\mbox{ }
\begin{enumerate}\item  There are positive numbers $C_1$ and $C_2$ such
that for all $k \in \Z^+$ and all real numbers $x \geq 3$, we have
\[ \# \{ d \leq x \mid \omega(d) = k\} \leq C_1 \frac{x}{\log x} \frac{(\log \log x +
C_2)^{k-1}}{(k-1)!}. \]
\item There is a positive number $C_3$ such that for
all $K \in \Z^+$ and all real numbers $x \geq 3$, we have
\[ \# \{d \leq x \mid \Omega(n) \geq K\} \leq C_3 \frac{K}{2^K} x\log{x}. \]
\end{enumerate}
\end{lem}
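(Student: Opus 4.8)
The plan is to treat the two parts separately, since each is a standard ``anatomy of integers'' estimate resting on different tools. Throughout I write $\pi_k(x) = \#\{d \le x : \omega(d) = k\}$ and $L = \log\log x$. For part (i) I would argue by induction on $k$, exploiting the recursive inequality
\[ k\,\pi_k(x) \le \sum_{p^a \le x} \pi_{k-1}(x/p^a), \]
which follows by writing each $d$ with $\omega(d) = k$ as $d = p^a m$ with $p^a \parallel d$ and $\omega(m) = k-1$: there are exactly $k$ choices of the distinguished prime $p$, which accounts for the factor $k$ on the left, and dropping the coprimality constraint $p \nmid m$ gives the inequality. The base case $k=1$ counts prime powers and is $\ll x/\log x$ by Chebyshev's bound, so it holds once $C_1$ is large. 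For the inductive step I would insert the hypothesis for $\pi_{k-1}$, isolate the dominant contribution of the squarefree part ($a=1$), and invoke Mertens' theorem $\sum_{p\le x} 1/p = L + O(1)$; the prime powers $a\ge 2$ contribute $\sum_{p,\,a\ge 2} p^{-a} = O(1)$ and are absorbed into $C_2$.

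The main obstacle in (i) is that the inductive hypothesis supplies a factor $1/\log(x/p^a)$ rather than $1/\log x$, and the crude bound $\log(x/p) \ge \tfrac12\log x$ loses a multiplicative factor $2$ that cannot be absorbed into the additive constant $C_2$. I would resolve this in the usual way: bound the contribution of $p > \sqrt{x}$ trivially (there the quotient $x/p$ is small and such primes are few), and for small $p$ use the weighted estimate $\sum_{p\le x}\frac{1}{p\,\log(x/p)} \ll \frac{L}{\log x}$, obtained by partial summation, so that the induction closes with a single $C_2$ dominating the $O(1)$ errors at every level.

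For part (ii) the natural tool is Rankin's trick with the multiplicative weight $2^{\Omega(d)}$: since $2^{\Omega(d)} \ge 2^K$ whenever $\Omega(d)\ge K$,
\[ \#\{d \le x : \Omega(d) \ge K\} \le 2^{-K}\sum_{d \le x} 2^{\Omega(d)}. \]
The difficulty is that the prime $2$ is pathological, as $2^{\Omega(2^a)} = 2^a$, so that $\sum_{d\le x} 2^{\Omega(d)} \asymp x(\log x)^2$ and a direct application would cost an extra logarithm. I would circumvent this by factoring out the $2$-part, writing $d = 2^a m$ with $m$ odd and applying Rankin only to the odd factor. The key input is the clean bound
\[ \sum_{\substack{m \le y \\ 2\nmid m}} 2^{\Omega(m)} \ll y\log y, \]
which I would prove elementarily via the convolution $2^{\Omega} = \mathbf{1} * g$, where $g$ is multiplicative with $g(p^a) = 2^{a-1}$ for $a\ge 1$; summing $g(d)/d$ over odd $d$ yields the Euler product $\prod_{p \le y}\bigl(1 + \tfrac{1}{p-2}\bigr) \ll \log y$ by Mertens.

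Finally I would decompose
\[ \#\{d \le x : \Omega(d) \ge K\} = \sum_{a \ge 0}\#\{m \le x/2^a : 2\nmid m,\ \Omega(m) \ge K-a\} \]
and apply the odd estimate through Rankin. Each term with $0 \le a < K$ contributes $\ll 2^{-(K-a)}(x/2^a)\log(x/2^a) \ll 2^{-K} x\log x$, while the tail $a \ge K$ (where the constraint is vacuous) contributes $\ll \sum_{a\ge K} x/2^a \ll 2^{-K}x$. Summing the $K$ relevant terms produces exactly the claimed bound $C_3\,\frac{K}{2^K}\,x\log x$, the factor $K$ being precisely the number of admissible values of $a$.
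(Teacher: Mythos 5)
The paper itself does not prove this lemma at all: part (i) is quoted as the classical Hardy--Ramanujan inequality \cite{HR00}, and part (ii) is cited from \cite{HT88} via \cite{LP07}. So your self-contained argument is necessarily a different route and must stand on its own. Your part (ii) does: splitting $d=2^am$ with $m$ odd, the convolution bound $\sum_{m\le y,\,2\nmid m}2^{\Omega(m)}\ll y\log y$ (via $2^{\Omega}=\mathbf{1}\ast g$, $g(p^j)=2^{j-1}$, with the Euler product of course taken only over odd primes), Rankin applied to the odd part for each $a<K$, and the vacuous tail $a\ge K$ together give exactly $C_3K2^{-K}x\log x$. That is a complete and correct argument.

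Part (i), however, has a genuine gap: the treatment of the range $p>\sqrt{x}$. The parenthetical claim that ``such primes are few'' is not true in any usable sense --- there are $\pi(x)-\pi(\sqrt{x})\sim x/\log x$ of them, i.e.\ almost all primes up to $x$ --- and the trivial bound $\pi_{k-1}(x/p)\le x/p$ gives, by Mertens,
\[
\sum_{\sqrt{x}<p\le x}\pi_{k-1}(x/p)\ \le\ x\sum_{\sqrt{x}<p\le x}\frac{1}{p}\ \asymp\ x.
\]
Even after dividing by $k$ this is $\gg x/k$, which exceeds the target $C_1\frac{x}{\log x}\frac{(\log\log x+C_2)^{k-1}}{(k-1)!}$ for \emph{every} fixed $k\ge 2$ once $x$ is large, since the target is $o(x)$. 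So the induction fails to close already at $k=2$, not merely for exotic $k$. A related defect: the displayed estimate $\sum_{p\le x}\frac{1}{p\log(x/p)}\ll\frac{\log\log x}{\log x}$ is false as written (if $x$ lies just above a prime $p$, that single term is $\asymp 1$); it holds only when restricted to $p\le\sqrt{x}$, and moreover the induction needs it in the sharp form $\sum_{p^a\le \sqrt{x}}\frac{1}{p^a\log(x/p^a)}\le\frac{\log\log x+O(1)}{\log x}$, with leading constant $1$, since a constant $A>1$ in front of $\log\log x$ would compound at each inductive step.

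The repair is structural rather than a more careful bound on the large primes. Since an integer $n\le x$ has at most one prime-power divisor exceeding $\sqrt{x}$, every $n\le x$ with $\omega(n)=k\ge 2$ has at least $k-1$ prime powers $p^a\parallel n$ with $p^a\le\sqrt{x}$, whence
\[
(k-1)\,\pi_k(x)\ \le\ \sum_{p^a\le\sqrt{x}}\pi_{k-1}(x/p^a),
\]
which eliminates the problematic range entirely; your weighted Mertens estimate (in the sharp form above) then closes the induction cleanly. Alternatively, one can keep your $k$-fold recursion and handle $p>\sqrt{x}$ by swapping roles: write $n=pm$ with cofactor $m<\sqrt{x}$, count the primes $p\le x/m$ by Chebyshev, and bound $\sum_{m<\sqrt{x},\,\omega(m)=k-1}1/m\le\frac{1}{(k-1)!}\bigl(\sum_{p^a\le\sqrt{x}}p^{-a}\bigr)^{k-1}$ by the multinomial theorem; this yields $D\frac{x}{\log x}\frac{(\log\log x+C)^{k-1}}{(k-1)!}$ with $D,C$ absolute and independent of $C_1$, which the $\frac1k$ of slack in the main term absorbs once $C_1\ge D$. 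Either way, an idea beyond what you wrote is required.
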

\begin{proof}
Part (i) is a classical inequality of Hardy and Ramanujan \cite{HR00}. Part (ii) is taken from \cite{HT88} (Exercise 05, p. 12); for details, see the proofs of Lemmas 12 and 13 in \cite{LP07}.
\end{proof}
To prove the upper bound in Theorem \ref{thm:avg0}(i), we will show that the mass of $T_{\rm CM}(d)$ is highly concentrated on certain arithmetically special $d$.

For each positive integer $n$, we form a set of integers $\Lambda(n)$, with definition motivated by the statement of Theorem \ref{lem:div2}. For each prime power $\ell^{\alpha}$ with $\alpha \ge 2$, let
\[ \Lambda(\ell^{\alpha}) = \{\ell^{\alpha-2}(\ell-1)(\ell+1), \ell^{\alpha-2}(\ell-1)^2, \ell^{\alpha-1}(\ell-1)\}, \]
and for each prime $\ell$, let
\[ \Lambda(\ell) = \{\ell^2-1, (\ell-1)^2, \ell-1\}. \]
For any $n \in \Z^+$, let $\Lambda(n)$ be the set of integers $\lambda$ that can be written in the form
\begin{equation}\label{eq:lambdadef} \prod_{\ell^{\alpha} \parallel n} \lambda_{\ell^{\alpha}},  \end{equation}
where each $\lambda_{\ell^{\alpha}} \in \Lambda(\ell^{\alpha})$.

\begin{lem}\label{lem:lambdadescription} Let $n$ be a positive integer.
\begin{enumerate}
\item[(i)] The cardinality of $\Lambda(n)$ is bounded above by $3^{\omega(n)}$.
\item[(ii)] Each $\lambda \in \Lambda(n)$ satisfies
\[ \lambda \gg n/(\log\log{(3n)})^2, \]
where the implied constant is absolute.
\item[(iii)] Each $\lambda \in \Lambda(n)$ has
\[ \Omega(\lambda) \ge \Omega(n)-2. \]
\end{enumerate}
\end{lem}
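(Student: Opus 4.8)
The plan is to establish the three parts of Lemma~\ref{lem:lambdadescription} by analyzing the factored form \eqref{eq:lambdadef} one prime at a time, since each $\lambda \in \Lambda(n)$ is by definition a product $\prod_{\ell^\alpha \parallel n} \lambda_{\ell^\alpha}$ with $\lambda_{\ell^\alpha} \in \Lambda(\ell^\alpha)$. Part (i) is immediate: each local set $\Lambda(\ell^\alpha)$ has at most three elements, and a choice of one element from each of the $\omega(n)$ local sets determines an element of $\Lambda(n)$, so $\#\Lambda(n) \le 3^{\omega(n)}$.

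For part (ii), I would bound each local factor $\lambda_{\ell^\alpha}$ from below by comparing it with the corresponding prime-power contribution $\ell^\alpha$ to $n$. The three candidates in $\Lambda(\ell^\alpha)$ for $\alpha \ge 2$ are $\ell^{\alpha-2}(\ell-1)(\ell+1)$, $\ell^{\alpha-2}(\ell-1)^2$, and $\ell^{\alpha-1}(\ell-1)$; the smallest of these is the middle one (when $\ell=2$, comparisons must be checked by hand, but the qualitative bound persists), and in all cases $\lambda_{\ell^\alpha} \gg \ell^\alpha$ for $\ell$ bounded away from small exceptional values, with a loss of at most a bounded factor coming from the smallest primes. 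The cleanest route is to show $\lambda_{\ell^\alpha} \ge c\, \ell^\alpha / (\text{something involving } \ell)$ locally and then multiply; the multiplicative loss across all prime factors is controlled because $\prod_{\ell \mid n}(1 - 1/\ell)^{-1} \asymp \log\log(3n)$ by Mertens' theorem, and the worst local ratio $\ell^\alpha/\lambda_{\ell^\alpha}$ behaves like $\ell/(\ell-1)$, whose product over $\ell \mid n$ is again $\asymp \log\log(3n)$. Squaring accounts for the case where two of the three factor types each contribute a factor of roughly $\ell/(\ell-1)$, yielding the stated $(\log\log(3n))^2$ in the denominator.

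For part (iii), I would track $\Omega$ additively across the product. Since $\Omega$ is completely additive, $\Omega(\lambda) = \sum_{\ell^\alpha \parallel n} \Omega(\lambda_{\ell^\alpha})$, so it suffices to compare the local $\Omega(\lambda_{\ell^\alpha})$ with $\alpha$ and sum. For $\alpha \ge 2$, each candidate in $\Lambda(\ell^\alpha)$ retains at least $\alpha - 1$ factors of $\ell$ (in the cases $\ell^{\alpha-1}(\ell-1)$) or $\alpha-2$ factors of $\ell$ plus at least two further prime factors from $(\ell-1)(\ell+1)$ or $(\ell-1)^2$ (each of $\ell\pm 1$ contributing $\Omega \ge 1$), so $\Omega(\lambda_{\ell^\alpha}) \ge \alpha - 1$ in every subcase; for $\alpha = 1$ the candidates $\ell^2-1$, $(\ell-1)^2$, $\ell-1$ all have $\Omega \ge 1 = \alpha$, with the deficit never exceeding one per prime. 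Summing the local deficits, the total shortfall $\Omega(n) - \Omega(\lambda)$ is at most the number of primes where we lose a factor, but a careful bookkeeping shows the cumulative loss is at most $2$ overall rather than per-prime—the key being that the ``$-1$'' losses are compensated by the extra prime factors appearing in the $(\ell-1)$, $(\ell+1)$ terms at all but boundedly many places.

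The main obstacle I anticipate is part (iii): getting the global constant down to a loss of exactly $2$ (rather than a loss proportional to $\omega(n)$) requires arguing that the per-prime deficits cannot all accumulate, and instead that the factors $\ell \pm 1$ supply enough ``replacement'' prime factors to absorb nearly all the lost powers of $\ell$. I expect the correct statement is that each local factor satisfies $\Omega(\lambda_{\ell^\alpha}) \ge \alpha - \epsilon_\ell$ where $\epsilon_\ell \in \{0,1\}$ and $\sum_\ell \epsilon_\ell \le 2$ cannot hold in general—so the actual argument likely shows $\Omega(\lambda_{\ell^\alpha}) \ge \alpha$ for all but at most the primes $\ell \in \{2,3\}$ (where $\ell-1$ or the parity of the local structure can cost a factor), with each such exceptional prime costing at most one, giving the uniform bound $\Omega(\lambda) \ge \Omega(n) - 2$. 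Pinning down precisely which small primes can be deficient, and verifying the deficit is at most one apiece, is the delicate part; everything else reduces to routine inequalities.
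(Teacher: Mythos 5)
Parts (i) and (ii) of your proposal are essentially the paper's own proof. Part (i) is the trivial count of three choices per prime. For (ii), the paper makes your ``local ratio'' idea precise in one line: every element of $\Lambda(\ell^{\alpha})$ satisfies $\lambda_{\ell^{\alpha}} \ge \ell^{\alpha}(1-1/\ell)^2$ (with equality for the candidate $\ell^{\alpha-2}(\ell-1)^2$, which is the true worst case --- not ``two factor types each contributing $\ell/(\ell-1)$''), so that $\lambda \ge n\prod_{\ell \mid n}(1-1/\ell)^2 = \varphi(n)^2/n$, and then invokes $\varphi(n) \gg n/\log\log(3n)$. One slip: your assertion $\prod_{\ell\mid n}(1-1/\ell)^{-1} \asymp \log\log(3n)$ is false as a two-sided bound (for $n$ prime the product is $1+o(1)$), but only the upper bound $n/\varphi(n) \ll \log\log(3n)$ is needed, so this does not hurt.

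Part (iii) is where you have a genuine gap, and your own hedging shows you sensed it. Your intermediate claims fail at $\ell = 2$: there $\ell - 1 = 1$ contributes nothing to $\Omega$, so the candidate $\ell^{\alpha-2}(\ell-1)^2 = 2^{\alpha-2}$ has $\Omega = \alpha - 2$, a local deficit of exactly $2$ --- contradicting both your claim that $\Omega(\lambda_{\ell^{\alpha}}) \ge \alpha-1$ ``in every subcase'' and your proposed accounting in which each exceptional prime costs at most one. Your guess that the exceptional primes are $\{2,3\}$ is also off: for $\ell = 3$ the three candidates are $8\cdot 3^{\alpha-2}$, $4 \cdot 3^{\alpha-2}$, $2\cdot 3^{\alpha-1}$ (and $8,4,2$ when $\alpha=1$), all with $\Omega \ge \alpha$, so there is no deficit at $3$. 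Moreover, the ``compensation between primes'' you envisage plays no role; the argument is purely local and additive. For every odd prime $\ell$, both $\ell-1 \ge 2$ and $\ell+1$ supply at least one prime factor each, so every element of $\Lambda(\ell^{\alpha})$ has $\Omega \ge \alpha$ (check the three candidates for $\alpha \ge 2$, and note all three have $\Omega \ge 1$ when $\alpha = 1$); for $\ell = 2$, every element of $\Lambda(2^{\alpha})$ has $\Omega \ge \alpha - 2$. Since $\Omega$ is completely additive, summing these local bounds over $\ell^{\alpha} \parallel n$ gives $\Omega(\lambda) \ge \Omega(n) - 2$, with the entire loss of $2$ coming from the single prime $2$.
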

\begin{proof} Since $\#\Lambda(\ell^{\alpha}) = 3$ for each prime power $\ell^{\alpha}$, (i) is immediate. To prove (ii), notice that each $\lambda_{\ell^{\alpha}} \in \Lambda(\ell^{\alpha})$ satisfies $\lambda_{\ell^{\alpha}} \ge \ell^{\alpha}(1-1/\ell)^2$. Consequently, each $\lambda \in \Lambda(n)$ is bounded below by $n \prod_{\ell \mid n}(1-1/\ell)^2 = \varphi(n)^2/n$. The claim now follows from the estimate $\varphi(n) \gg n/\log\log(3n)$ (see, e.g., \cite[Theorem 323, p. 352]{HW08}). For (iii), observe that except in the case $\ell=2$, each $\lambda_{\ell^{\alpha}} \in \Lambda_{\ell^{\alpha}}$ has $\Omega(\lambda_{\ell^{\alpha}}) \ge \alpha$, and that when $\ell=2$, we have the weaker bound  $\Omega(\lambda_{\ell^{\alpha}}) \ge \alpha-2$.
\end{proof}

\begin{proof}[Proof of the upper bound in Theorem \ref{thm:avg0}(i)] For even $d$, let $T'_{{\rm CM}}(d)$ be defined in the same way as $T_{\rm CM}(d)$, but with the extra restriction that $E$ is defined over a degree $d$ number field $F$ \emph{containing the CM field of $E$}. Since we can replace $F$ by a quadratic extension $F'/F$ containing the CM field, we have $T_{\rm CM}(d) \le T'_{\rm CM}(2d)$ for all $d$. Thus, it suffices to establish the claimed upper bound for $\sum_{d  \le x} T'_{\rm CM}(2d)$. The contribution to this latter sum from values of $d$ with $T'_{\rm CM}(2d) \le x/\log{x}$ is trivially $O(x^2/\log{x})$, which is acceptable for us. Since $T'_{\rm CM}(2d) \le T_{\rm CM}(2d) \le C x\log\log{x}$ for a certain absolute constant $C$ (see Theorem 1 of \cite{CP15}), the contribution from the remaining values of $d$ is
\[ \ll x\log\log{x} \sum_{\substack{d \le x \\ T'_{\rm CM}(2d) > \frac{x}{\log{x}}}} 1. \]
The proof of the theorem will be completed if we show that
\begin{equation}\label{eq:TCM2dlarge} \sum_{\substack{d \le x \\ T'_{\rm CM}(2d) > \frac{x}{\log{x}}}} 1 \le \frac{x}{(\log{x})^{1+o(1)}}, \end{equation}
as $x\to\infty$. To this end, suppose $T'_{\rm CM}(2d) = n > x/\log{x}$. From Theorem \ref{lem:div2}, $12d$ is divisible by some $\lambda \in \Lambda(n)$. So with
\[ \Lambda' \coloneqq  \bigcup_{\frac{x}{\log{x}} < n \le Cx\log\log{x}} \Lambda(n), \]
we see that
\begin{equation}\label{eq:ddivvy} \sum_{\substack{d \le x \\ T'_{\rm CM}(2d) > \frac{x}{\log{x}}}} 1 \le \#\{D \le 12x: \lambda \mid D\text{ for some }\lambda \in \Lambda'\}.\end{equation}
We bound the right-hand side of \eqref{eq:ddivvy} from above by considering various (possibly overlapping) cases for $\lambda$. For notational convenience, we put $X = Cx\log\log{x}$. We let $\epsilon > 0$ be a small, fixed parameter.

\vskip 0.1in
\noindent \textbf{Case I: } $\lambda \in \Lambda(n)$ for an $n \in (\frac{x}{\log{x}},X]$ with $\omega(n) \le \eta \log\log{x}$, where $\eta > 0$ is a sufficiently small constant. ``Sufficiently small'' is allowed to depend on $\epsilon$, and will be specified in the course of the proof.

Using the lower bound from Lemma \ref{lem:lambdadescription} on the elements of $\Lambda(n)$, we see that the number of $D \le 12x$ divisible by some $\lambda \in \Lambda(n)$ is
\[ \ll x\sum_{\lambda \in \Lambda(n)}\frac{1}{\lambda}\ll \frac{x}{n}(\log\log{x})^2 \sum_{\lambda \in \Lambda(n)} 1  \ll \frac{x}{n}(\log\log{x})^2 \cdot 3^{\omega(n)} \ll \frac{x}{n}(\log\log{x})^2 (\log{x})^{\eta\log{3}} .
 \]
If we assume that $\eta < \epsilon/\log{3}$, this upper bound is $O(\frac{x}{n} (\log{x})^{2\epsilon})$. Thus, the total number of $D$ that can arise in this way is
\begin{equation}\label{eq:totnumber} \ll x (\log{x})^{2\epsilon} \sum_{\substack{\frac{x}{\log{x}} < n \le X \\ \omega(n) \le \eta\log\log{x}}}{\frac{1}{n}}. \end{equation}
To estimate the sum we appeal to Lemma \ref{lem:HRHT}(i).  For each $T \in [x/\log{x},X]$, the number of $n \le 2T$ with $\omega(n) \le \eta\log\log{x}$ is \[\ll \frac{T}{\log{x}} \sum_{1\le k \le \eta\log\log{x}} \frac{(\log\log{x}+O(1))^{k-1}}{(k-1)!}.\]
We can assume $\eta < \frac{1}{2}$. Then each term in the right-hand sum on $k$ is at most half of its successor (once $x$ is large). Hence, the sum is bounded by twice its final term. Recalling that $(k-1)! \ge ((k-1)/e)^{k-1}$, the expression in the preceding display is thus seen to be $O(T (\log{x})^{\eta \log(e/\eta)-1+\epsilon})$.
Hence,
\begin{align*} \sum_{\substack{n  \in [T,2T] \\ \omega(n) \le \eta\log\log{x}}}{\frac{1}{n}} &\le \frac{1}{T} \#\{n \le 2T\mid \omega(n) \le \eta \log\log{x}\} \\&\ll(\log{x})^{\eta \log(e/\eta)-1+\epsilon}.  \end{align*}
Letting $T$ range over the $O(\log\log{x})$ values of the form $T=2^j x/\log{x}$, where $j \ge 0$ and  $2^j x/\log{x} \le X$, we find that
\[ \sum_{\substack{\frac{x}{\log{x}} < n \le X \\ \omega(n) \le \eta\log\log{x}}}{\frac{1}{n}}\ll(\log{x})^{\eta \log(e/\eta)-1+2\epsilon}. \]
Substituting this into \eqref{eq:totnumber}, and choosing $\eta$ sufficiently small in terms of $\epsilon$, we get that the total number of $D$ arising in this case is $O(x (\log{x})^{5\epsilon} (\log{x})^{-1})$.
\vskip 0.1in
\noindent \textbf{Case II: } $\lambda \in \Lambda(n)$ for an $n \in (\frac{x}{\log{x}},X]$ with $\eta \log\log{x} < \omega(n) < 10 \log\log{x}$ and
\[ \sum_{\substack{\ell \mid n \\ \Omega(\ell-1) \ge 40/\eta}} 1 \le \frac{\eta}{2}\log\log{x}.  \]

In this case, $n$ must be divisible by more than $\frac{\eta}{2}\log\log{x}$ primes $\ell$ with ${\Omega(\ell-1)} <40/\eta$. The number of primes $\ell$ up to a given height $T$ satisfying this restriction is $O(T/(\log{T})^{2+o(1)})$, as $T\to\infty$. (In \cite[p. 210]{erdos35}, this estimate is deduced from the upper bound sieve. For more precise results, see \cite{timofeev95}.) In particular, the sum of the reciprocals of such primes $\ell$ is bounded by a certain constant $c$. Thus, the number of possibilities for $n$ is at most
\[ X \sum_{k > \frac{\eta}{2}\log\log{x}} \frac{1}{k!}\bigg(\sum_{\substack{\ell \le X \\ \Omega(\ell-1) < 40/\eta}} \frac{1}{\ell}\bigg)^{k} \le X \sum_{k > \frac{\eta}{2}\log\log{x}} \frac{c^k}{k!}.  \]
(Here we used the multinomial theorem.)
Taking ratios between neighboring terms, we see that the right-hand sum is at most twice its first term (for large $x$). Using Stirling's formula, we find that the right-hand side is crudely bounded above by $x/(\log{x})^{100}$.

Given $n \in (\frac{x}{\log{x}},X]$, the number of corresponding $D$ is
\begin{align*} \ll x \sum_{\lambda \in \Lambda(n)} \frac{1}{\lambda} &\ll \frac{x}{n} (\log\log{x})^2 \cdot \#\Lambda(n) \\ &\ll (\log{x})^2 \cdot \#\Lambda(n) \le (\log{x})^2 \cdot 3^{10\log\log{x}} \ll (\log{x})^{15} . \end{align*}
Summing over the $O(x/(\log{x})^{100})$ possibilities for $n$, we see that only $O(x/(\log{x})^{85})$ values of $D$ arise in Case II.

\vskip 0.1in
\noindent \textbf{Case III: } $\lambda \in \Lambda(n)$ for an $n \in (\frac{x}{\log{x}},X]$ with $\eta \log\log{x} < \omega(n) < 10 \log\log{x}$ and
\[ \sum_{\substack{\ell \mid n \\ \Omega(\ell-1) \ge 40/\eta}} 1 > \frac{\eta}{2}\log\log{x}.  \]

Let $\ell$ be any prime dividing $n$ with $\Omega(\ell-1) \ge 40/\eta$. Choose $\alpha$ with $\ell^{\alpha}\parallel n$. Since $\ell-1$ divides each element of $\Lambda(\ell^{\alpha})$, all of these elements have at least $40/\eta$ prime factors, counted with multiplicity. So from \eqref{eq:lambdadef}, each $\lambda \in \Lambda(n)$ satisfies
\[ \Omega(\lambda) \ge \frac{40}{\eta} \cdot \frac{\eta}{2}\log\log{x} = 20\log\log{x}. \]
In particular, any $D$ divisible by a $\lambda \in \Lambda(n)$  satisfies $\Omega(D) \ge 20\log\log{x}$. But Lemma \ref{lem:HRHT}(ii) implies that the number of such $D \le 12x$ is $O(x/(\log{x})^{10})$.

\vskip 0.1in
\noindent \textbf{Case IV: } $\lambda \in \Lambda(n)$ for an $n \in (\frac{x}{\log{x}},X]$ with $\omega(n) \ge 10 \log\log{x}$.

For each prime $\ell > 2$, we have trivially that $\Omega(\ell-1) \ge 1$. Reasoning as in Case III, we see that each $\lambda \in \Lambda(n)$ satisfies
\[ \Omega(\lambda) \ge \omega(n)-1 > 9\log\log{x}. \]
Thus, any $D$ divisible by such a $\lambda$ also has $\Omega(D) > 9\log\log{x}$. By another application of Lemma \ref{lem:HRHT}(ii), the number of these $D\le 12x$ is $O(x/(\log{x})^5)$.

Assembling the estimates in cases I--IV, we see that the right-hand side of \eqref{eq:ddivvy} is $O(x (\log{x})^{5\epsilon} (\log{x})^{-1})$. Since $\epsilon>0$ is arbitrary, the upper bound is proved.\end{proof}

\section{Proof of Theorem \ref{thm:distinctgroups}: Distribution of maximal torsion subgroups}
\noindent
Here again it is convenient to treat the upper and lower bounds separately. The upper bound uses an elementary and classical mean-value theorem of Wintner.


\begin{prop}[{\cite[Corollary 2.2, p. 50]{SS94}}] \label{prop:wintner} Let $f\colon \Z^+ \ra \mathbb{C}$, and let $g\colon \Z^+ \ra \mathbb{C}$ be determined by the identity
\[ f(n) = \sum_{d\mid n}g(d) \quad\text{for all $n\in \Z^{+}$}. \] If $\sum_{n=1}^{\infty} \frac{|g(n)|}{n} < \infty$,  then as $x\to\infty$,
\[ \sum_{n \le x} f(n) = (\mathfrak{S}+o(1)) x,\quad\text{where}\quad \mathfrak{S}\coloneqq  \sum_{n=1}^{\infty}\frac{g(n)}{n}.\]
Furthermore, if $f$ is multiplicative, then $\mathfrak{S}$ can be written as a convergent Euler product:
\[ \mathfrak{S} = \prod_{p}\left(1+\frac{g(p)}{p} + \frac{g(p^2)}{p^2} + \dots\right). \]
\end{prop}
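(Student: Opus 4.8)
The plan is to follow the classical Wintner mean-value argument, whose engine is an interchange of summation powered by the absolute convergence of $\sum_n |g(n)|/n$. First I would use the defining convolution identity $f = g * 1$ to rewrite the summatory function by grouping terms according to the divisor $d$: counting, for each $d \le x$, the multiples of $d$ that do not exceed $x$, one obtains
\[ \sum_{n \le x} f(n) = \sum_{n \le x} \sum_{d \mid n} g(d) = \sum_{d \le x} g(d) \left\lfloor \frac{x}{d} \right\rfloor. \]
This reduces everything to understanding the right-hand side.

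Next I would split off the main term by writing $\lfloor x/d \rfloor = x/d - \{x/d\}$, so that $\sum_{n \le x} f(n) = x \sum_{d \le x} g(d)/d - \sum_{d \le x} g(d)\{x/d\}$. For the main term, absolute convergence guarantees that $\sum_{d \le x} g(d)/d \to \mathfrak{S}$ and that the tail obeys $x \sum_{d > x} g(d)/d = o(x)$ (the tail of a convergent series tends to $0$), giving $x \sum_{d \le x} g(d)/d = \mathfrak{S}\, x + o(x)$. The main obstacle is controlling the fractional-part error $\sum_{d \le x} g(d)\{x/d\}$, since the crude bound $\sum_{d \le x}|g(d)|$ is not obviously $o(x)$ and the fractional parts resist direct estimation. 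Here I would introduce a cutoff: given $\epsilon > 0$, pick $y$ so large that $\sum_{d > y} |g(d)|/d < \epsilon$, and split the sum at $y$. For the tail $y < d \le x$, I use $|g(d)| \le x\,|g(d)|/d$ (valid since $d \le x$) to get $\sum_{y < d \le x} |g(d)| \le x \sum_{d > y} |g(d)|/d < \epsilon x$; for the head $d \le y$, the quantity $\sum_{d \le y} |g(d)|$ is a constant independent of $x$, hence $o(x)$. Thus $\limsup_{x \to \infty} \frac{1}{x}\bigl|\sum_{d \le x} g(d)\{x/d\}\bigr| \le \epsilon$, and letting $\epsilon \to 0$ shows the error term is $o(x)$. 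Combining the two pieces yields $\sum_{n \le x} f(n) = (\mathfrak{S} + o(1))x$.

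Finally, for the Euler product, I would note that Möbius inversion gives $g = f * \mu$, so $g$ is multiplicative whenever $f$ is. The absolute convergence hypothesis $\sum_n |g(n)|/n < \infty$ then licenses the standard rearrangement of the Dirichlet series $\sum_n g(n)/n$ of a multiplicative function into its Euler factorization $\prod_p \bigl(1 + g(p)/p + g(p^2)/p^2 + \cdots\bigr)$, with each local factor converging absolutely. This identifies $\mathfrak{S}$ with the displayed product and completes the proof.
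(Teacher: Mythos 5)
Your proof is correct: the interchange of summation giving $\sum_{n \le x} f(n) = \sum_{d \le x} g(d)\lfloor x/d\rfloor$, the splitting $\lfloor x/d\rfloor = x/d - \{x/d\}$ with the cutoff argument showing $\sum_{d \le x}|g(d)| = o(x)$, and the Euler-product factorization (after noting $g = f * \mu$ is multiplicative and invoking absolute convergence to justify the rearrangement) are all sound. The paper itself gives no proof — it quotes the result directly from Schwarz--Spilker \cite{SS94} — and your argument is exactly the classical Wintner mean-value proof found in that reference, so there is nothing to reconcile between the two.
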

\noindent
If $G$ is an abelian group of order $n$ and torsion rank at most $2$, then $G$ has a unique representation in the form $\Z/d\Z \times \Z/\frac{n}{d}\Z$, where $d\mid \frac{n}{d}$. So given $n$, the number of such groups $G$ is given by $\tau'(n)\coloneqq \sum_{d^2 \mid n}1$. Notice that $\tau'$ is multiplicative.

In the next lemma, we estimate asymptotically the  number of abelian groups of torsion rank at most $2$ and order at most $y$.

\begin{lem}\label{lem:tau0mean} As $y\to\infty$, we have \[\sum_{n \le y} \tau'(n) \sim \frac{\pi^2}{6}y.\]
\end{lem}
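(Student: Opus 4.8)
The goal is to estimate the mean value of the multiplicative function $\tau'(n) = \sum_{d^2 \mid n} 1$, which counts the number of ways to write $n$ as $n = d^2 m$ with $d, m \in \Z^+$. The plan is to exploit this factorization directly rather than invoke Proposition \ref{prop:wintner}, since the structure here is especially clean. Writing $n = d^2 m$, I would begin with the identity
\[ \sum_{n \le y} \tau'(n) = \sum_{d^2 m \le y} 1 = \sum_{d \le \sqrt{y}} \left\lfloor \frac{y}{d^2} \right\rfloor. \]

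From here I would replace the floor by $y/d^2 + O(1)$, so that
\[ \sum_{n \le y} \tau'(n) = y \sum_{d \le \sqrt{y}} \frac{1}{d^2} + O(\sqrt{y}). \]
Extending the sum over $d$ to infinity introduces an error of size $y \sum_{d > \sqrt{y}} d^{-2} \ll y \cdot y^{-1/2} = \sqrt{y}$, which is absorbed into the error term. Since $\sum_{d=1}^{\infty} d^{-2} = \zeta(2) = \pi^2/6$, this yields
\[ \sum_{n \le y} \tau'(n) = \frac{\pi^2}{6} y + O(\sqrt{y}), \]
which is stronger than the claimed asymptotic and immediately gives the result.

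Alternatively, one could apply Proposition \ref{prop:wintner} as suggested by its placement in the text: writing $\tau' = \mathbf{1} * g$ via Dirichlet convolution, the function $g$ is multiplicative and supported on squares, with $g(p^{2k}) = g(p^{2k-1}) = \ldots$ determined by comparing local factors, giving a Dirichlet series $\sum_n g(n) n^{-s} = \zeta(2s)/\zeta(s) \cdot \zeta(s) = \zeta(2s)$ evaluated appropriately; more directly $\tau'(n) = \sum_{d \mid n} [\,d \text{ is a perfect square}\,]$, so $g(d) = 1$ when $d$ is a square and $0$ otherwise, whence $\mathfrak{S} = \sum_{d} g(d)/d = \sum_{m} 1/m^2 = \pi^2/6$. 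The convergence hypothesis $\sum_n |g(n)|/n = \sum_m m^{-2} < \infty$ is clearly satisfied, so Proposition \ref{prop:wintner} delivers $\sum_{n \le y} \tau'(n) \sim (\pi^2/6) y$ at once.

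There is essentially no obstacle here: the function $\tau'$ is transparent enough that the elementary hyperbola-style manipulation gives the answer with a concrete error term, and the only point requiring any care is confirming that $g$ is the indicator of perfect squares so that $\mathfrak{S}$ evaluates to $\zeta(2)$. I would present the first, self-contained argument as the main proof, since it is short and yields the sharper $O(\sqrt{y})$ remainder, with the appeal to Proposition \ref{prop:wintner} mentioned only as confirmation that the constant $\pi^2/6$ is correct.
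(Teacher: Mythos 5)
Your proof is correct, and your main argument takes a genuinely different route from the paper. The paper proves the lemma exactly as in your alternative paragraph: it applies Proposition \ref{prop:wintner} with $f=\tau'$ and $g=\mathbf{1}_{\square}$ the indicator of the squares, checks $\sum_n |g(n)|/n = \zeta(2) < \infty$, and reads off $\mathfrak{S} = \zeta(2) = \pi^2/6$. Your primary argument instead counts pairs directly, $\sum_{n\le y}\tau'(n) = \sum_{d\le \sqrt{y}} \lfloor y/d^2\rfloor$, and all its steps are sound: the floor-replacement error is $O(\sqrt{y})$ over at most $\sqrt{y}$ terms, and completing the sum over $d$ costs $y\sum_{d>\sqrt{y}}d^{-2} \ll \sqrt{y}$, yielding the sharper statement $\sum_{n\le y}\tau'(n) = \frac{\pi^2}{6}y + O(\sqrt{y})$. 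What each approach buys: yours is self-contained and quantitative, which the qualitative Wintner theorem cannot give; the paper's choice is a one-line application of machinery it needs anyway, since the remark immediately following the lemma reuses Proposition \ref{prop:wintner} to get $\sum_{n\le y}\tau'(n)^{\alpha} \sim \mathfrak{S}_{\alpha} y$ for arbitrary fixed $\alpha>0$ (used via H\"older in the upper bound of Theorem \ref{thm:distinctgroups}), a statement for which your direct lattice-point count does not extend as cleanly because $\tau'^{\alpha}$ is no longer a Dirichlet convolution with an obvious second factor. One cosmetic point: in your alternative paragraph the Dirichlet-series bookkeeping $\zeta(2s)/\zeta(s)\cdot \zeta(s)$ is garbled as written (the clean statement is simply that $\tau' = \mathbf{1} * \mathbf{1}_{\square}$ has Dirichlet series $\zeta(s)\zeta(2s)$, so $g=\mathbf{1}_{\square}$ has series $\zeta(2s)$), but you arrive at the right identification of $g$ and the right constant.
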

\begin{proof} We apply Proposition \ref{prop:wintner} with $f=\tau_0$ and $g=\mathbf{1}_{\square}$, where $\mathbf{1}_{\square}$ is the characteristic function of the square numbers. Then $\sum_{n=1}^{\infty} \frac{|g(n)|}{n} = \zeta(2) < \infty$. Since $\sum_{n=1}^{\infty}\frac{g(n)}{n} = \zeta(2) = \frac{\pi^2}{6}$, we obtain the lemma.
\end{proof}

\begin{remarks}\mbox{}
\begin{enumerate}
\item[(i)] For each fixed $r \in \Z^{+}$, one can prove in a similar way that the number of abelian groups of order not exceeding $y$ and torsion rank not exceeding $r$ is asymptotic to $(\prod_{2\le k \le r} \zeta(k))y$, as $y\to\infty$. (For a more precise estimate when $r\ge 3$, see \cite{bhowmik93}.) This result dovetails with the theorem of Erd\H{o}s and Szekeres \cite{ES34} that the total number of abelian groups of order at most $y$ is asymptotically $(\prod_{k=2}^{\infty}\zeta(k))y$. Here $\prod_{k=2}^{\infty}\zeta(k)= 2.294856591\dots$.
\item[(ii)] Fix $\alpha > 0$. Proposition \ref{prop:wintner} implies that $\sum_{n \le y} \tau'(n)^{\alpha} \sim \mathfrak{S}_{\alpha}y$, as $y\to\infty$, for some constant $\mathfrak{S}_{\alpha}$. To see this, let $f=\tau'^{\alpha}$, and define $g$ by M\"{o}bius inversion, so that $g(n) = \sum_{d\mid n}\mu(d) \tau'(n/d)^{\alpha}$. In particular, $g(p) = \tau'(p)^{\alpha}-1= 0$, while for prime powers $p^k$ with $k \ge 2$, we have the crude bounds
\[ 0 \le g(p^k) = \tau'(p^k)^{\alpha} -\tau'(p^{k-1})^{\alpha} \le k^{\alpha}.\]
Hence, $\sum_{n=1}^{\infty} \frac{|g(n)|}{n}=\prod_{p}\left(1+\frac{g(p)}{p} + \frac{g(p^2)}{p^2} + \dots\right) = \prod_{p}\left(1+O(\frac{1}{p^2})\right)<\infty$.

We will use this remark below.
\end{enumerate}
\end{remarks}

\begin{proof}[Proof of the upper bound in Theorem \ref{thm:distinctgroups}] From Lemma \ref{lem:tau0mean}, the number of abelian groups of order at most $x/\log{x}$ and torsion rank at most $2$ is $O(x/\log{x})$, which is negligible for our purposes. So it suffices to consider groups that are maximal for degrees $d \le x$ having $T_{\rm CM}(d) > x/\log{x}$. Such $d$ have the property that $T_{\rm CM}'(2d) > x/\log{x}$. Given $\epsilon > 0$, we showed (see \eqref{eq:TCM2dlarge}) that the number of these $d$ is at most $x/(\log{x})^{1-\epsilon}$ for large $x$. Let $\Bb$ be the corresponding set of values of $T_{\rm CM}(d)$. Then the number of maximal torsion subgroups coming from $d$ with $T_{\rm CM}(d) > x/\log{x}$ is at most $\sum_{n \in \Bb} \tau'(n)$. H\"{o}lder's inequality shows that for any positive $\alpha$ and $\beta$ with $\frac{1}{\alpha}+\frac{1}{\beta}=1$,
\begin{align*} \sum_{n \in \Bb} \tau'(n) &\le \Bigg(\sum_{\substack{n \le Cx\log\log{x}}} \tau'(n)^{\alpha}\Bigg)^{1/\alpha} \Bigg(\sum_{n \in \Bb}1 \Bigg)^{1/\beta}.  \end{align*}
Here $C$ has the same meaning as in the proof of Theorem \ref{thm:avg0}(i).
Let $\beta = 1+\epsilon$, so $\alpha = \frac{1+\epsilon}{\epsilon}$. By the second remark following Lemma \ref{lem:tau0mean}, the first sum on $n$ is $O(x\log\log{x})$. The second sum on $n$ is $O(x/(\log{x})^{1-\epsilon})$. So the above right-hand side is
\[ \ll (x\log\log{x})^{\frac{\epsilon}{1+\epsilon}} \cdot x^{\frac{1}{1+\epsilon}} (\log{x})^{-\frac{(1-\epsilon)}{1+\epsilon}} \ll x/(\log{x})^{1-3\epsilon}. \]
Since $\epsilon$ can be taken arbitrarily small, this is acceptable for us.
\end{proof}
\noindent
The lower bound relies on a very recent `anatomical' result of Luca, Pizzarro-Madariaga, and Pomerance.

\begin{prop}[{\cite[Theorem 3]{LPMP15}}]\label{prop:LPMP} There is a $\delta >0$
such that: for all $u \in \Z^+$ and $v \in \Z$, there is $C(u,v) > 0$ such that for all $2 \le z \le x$, the number of primes $\ell \le x$ with $u\ell+v$ having a divisor $p-1$ with $p > z$, $p \ne \ell$, and $p$ prime is at most \[C(u,v) \frac{\pi(x)}{(\log{z})^{\delta}}.\]
\end{prop}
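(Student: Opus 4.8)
The goal is to count primes $\ell \le x$ for which $u\ell+v$ has a divisor of the form $p-1$ with $p$ prime, $p > z$, and $p \neq \ell$. First I would dispose of the condition $p \neq \ell$: since $(\ell-1)\mid(u\ell+v)$ is equivalent to $(\ell-1)\mid(u+v)$, the excluded case arises only for the $O_{u,v}(1)$ primes $\ell \le u+v+1$, so it is harmless. The natural first attempt is a union bound over the witnessing prime $p$. For $(p-1)\mid(u\ell+v)$ we need $p \le ux+v+1$, and the divisibility confines $\ell$ to $\gcd(u,p-1)$ residue classes modulo $p-1$; by the Brun--Titchmarsh inequality the number of primes $\ell \le x$ lying in the admissible classes coprime to $p-1$ is $\ll_u \frac{x}{\varphi(p-1)\log(2x/(p-1))}$. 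Thus, up to the $u$-dependent constant, the quantity to be bounded is at most
\[ \sum_{z < p \le ux+v+1} \frac{x}{\varphi(p-1)\,\log(2x/(p-1))}. \]

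Carrying out this sum directly is exactly where the principal difficulty appears. Because $\frac{1}{\varphi(p-1)}\asymp \frac{\log\log p}{p}$, one finds $\sum_{z<p\le ux}\frac{1}{\varphi(p-1)}\asymp \log\frac{\log x}{\log z}$, which for small $z$ exceeds even the trivial bound $\pi(x)$. The reason is overcounting: a typical ``bad'' $\ell$ is witnessed by many primes $p$ simultaneously, so the \emph{expected} number of witnesses diverges, whereas we only need the probability of at least one witness to be small. No first-moment argument can succeed, and overcoming this is the crux of the theorem.

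To count each $\ell$ once, rather than with its large number of witnesses, I would follow the anatomy-of-integers analysis underlying the Erd\H{o}s--Wagstaff theorem (Proposition \ref{lem:EW}), in its quantitative form with a power saving in $z$; this power saving --- governed by the Erd\H{o}s--Ford--Tenenbaum constant, as in the remark following Theorem \ref{thm:density} --- is the source of the factor $(\log z)^{-\delta}$. Concretely, I would decompose the range of $p$ dyadically into blocks $p \in (P,2P]$ with $z < P \le ux+v$. Within a block, $(p-1)\mid(u\ell+v)$ says that $u\ell+v$ has a divisor lying in the window $(P,2P]$ equal to a shifted prime $p-1$; equivalently, writing $t = (u\ell+v)/(p-1)$, both $\ell$ and $p=(u\ell+v)/t+1$ are to be prime. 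I would estimate the number of such $\ell$ by a two-dimensional sieve for the simultaneous primality of these two linear forms, and then control the sum over dyadic blocks by a Ford-type bound on the concentration of divisors in short intervals, together with the rarity of integers carrying abnormally many prime factors (Lemma \ref{lem:HRHT}), which suppresses the contribution of the $\ell$ with many witnesses.

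The hard part is making this anatomical estimate uniform. One must show that, summed over all dyadic windows above $z$, the number of primes $\ell \le x$ with $u\ell+v$ admitting a shifted-prime divisor exceeding $z$ is $\ll \pi(x)/(\log z)^{\delta}$, with the dependence on $u$ and $v$ cleanly isolated into the constant $C(u,v)$ and no hidden dependence on $x$ or $z$ entering $\delta$. This is precisely the step that transplants the Erd\H{o}s--Ford--Tenenbaum machinery from the integers to the sparse, sieve-theoretic setting of the shifted primes $u\ell+v$; managing the interaction between the sieve input (primality of $\ell$ and of $p$) and the divisor-distribution input, while keeping the saving a genuine fixed power of $\log z$, is the main obstacle I would expect to confront.
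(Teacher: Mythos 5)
First, for orientation: the paper does not prove Proposition \ref{prop:LPMP} at all --- it is imported as a black box from \cite[Theorem 3]{LPMP15} --- so your attempt has to be measured against Luca--Pizarro-Madariaga--Pomerance's argument, not anything in this paper. Parts of your setup are sound: the Brun--Titchmarsh union bound over witnesses $p$, and especially the diagnosis that the first moment $\sum_p \#\{\ell \le x : (p-1)\mid u\ell+v\} \asymp \pi(x)\log(\log x/\log z)$ overshoots the truth because bad $\ell$ carry many witnesses. Identifying that obstruction is indeed the right first step.

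However, there are genuine gaps. (1) Discarding the condition $p\neq\ell$ is \emph{not} harmless: when $u+v=0$, the divisibility $(\ell-1)\mid(u\ell+v)$ holds for \emph{every} prime $\ell$ (your equivalence $(\ell-1)\mid(u\ell+v)\Leftrightarrow(\ell-1)\mid(u+v)$ then holds always, not only for $\ell\le u+v+1$), so with $p=\ell$ allowed the count is $\sim\pi(x)$ and the statement is false. This is not a degenerate corner: the paper applies the proposition with $(u,v)=(4,-4)$, i.e.\ exactly $u+v=0$, so the hypothesis $p\neq\ell$ must be carried throughout. (2) More fundamentally, everything after your (correct) diagnosis is a list of hoped-for tools, and as assembled they do not suffice. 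The two-dimensional sieve over cofactors $t=(u\ell+v)/(p-1)$ gives $\ll (x/t)/\log^2(x/t)$ per $t$ (times a singular series); summing over all $t\le ux/z$ yields, for $z\le\sqrt{x}$, a bound of order $\pi(x)(\log x)(\log\log x)^{O(1)}/\log z\ge\pi(x)$ --- no better than trivial --- so the cofactor sieve only disposes of witnesses $p$ near the very top of their range, while the union bound only handles $p$ barely above $z$. Your proposed patch for the middle range, Ford-type bounds per dyadic block, also cannot close it: since Ford's exponent satisfies $\eta<1$, the sum of $(\log(2^jz))^{-\eta}$ over dyadic blocks from $z$ to $x$ diverges. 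The entire content of the theorem is the mechanism that injects the primality of $p$ (an extra $1/\log$-type saving at each scale) \emph{inside} the divisor-anatomy estimate so that the dyadic sum converges to a uniform $(\log z)^{-\delta}$, for all $2\le z\le x$; your write-up explicitly defers this step as ``the main obstacle,'' so what you have is an accurate appraisal of the difficulty rather than a proof of the proposition.
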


\begin{proof}[Proof of the lower bound in Theorem \ref{thm:distinctgroups}] We will prove the stronger assertion that there are $\gg x/\log{x}$ distinct values of $T_{\rm CM}(d)$ for $d \le x$. We consider degrees $d=\frac{\ell-1}{3}$, where $\ell \in (x/2,x]$ is a prime with $\ell \equiv 1\pmod{3}$. By the prime number theorem for progressions, there are $(\frac{1}{4}+o(1))\frac{x}{\log{x}}$ such primes $\ell$. We will show that for all but $o(x/\log{x})$ of these values of $\ell$, the corresponding $d$ is such that $T_{\rm CM}(d)$ has largest prime factor $\ell$. Consequently, after discarding the $o(x/\log{x})$ exceptional values of $\ell$, we obtain a set of $(\frac{1}{4}+o(1))\frac{x}{\log{x}}$ values of $d$ on which the map $d\mapsto T_{\rm CM}(d)$ is injective.

From Proposition \ref{prop:CCS1A}, there is a CM elliptic curve $E$ over a number field of degree $d$ for which $E$ has a rational point of order $\ell$. So if the largest prime factor of $T_{\rm CM}(d)$ is not $\ell$, then either
\begin{enumerate}
\item[(i)] there is a prime $p$ dividing $T_{\rm CM}(d)$ with $p > \ell$, or
\item[(ii)] $\ell \nmid T_{\rm CM}(d)$ and $T_{\rm CM}(d) > \ell$.
\end{enumerate}
Choose an $F$ of degree $d$ and a CM elliptic curve $E_{/F}$ with $\#E(F)[{\rm tors}] = T_{\rm CM}(d)$. Let $K$ denote the CM field.

In case (i), $E(FK)$ has a point of order $p$. Hence, Lemma \ref{lem:div} implies that
\[ p-1 \mid w_{FK} \frac{[FK:\Q]}{2} \mid w_{FK} d \mid  4(\ell-1). \]
Since $p > \ell > x/2$, Proposition \ref{prop:LPMP} (with $u=4$, $v=-4$) shows that there are only $O(x/(\log{x})^{1+\delta})$ possibilities for $\ell$. This is negligible for us.

Now suppose that we are in case (ii). To start off, we suppose additionally that $\Omega(T_{\rm CM}(d)) > 10\log\log{x}$. Let $n' = \#E(FK)[{\rm tors}]$. Since $T_{\rm CM}(d) = \#E(F)[{\rm tors}] \mid n'$, we have $\Omega(n') >10\log\log{x}$. Theorem \ref{lem:div2} shows that $4(\ell-1)$ is divisible by some $\lambda \in \Lambda(n')$. So from Lemma \ref{lem:lambdadescription}(iii),
\[ \Omega(4(\ell-1)) \ge \Omega(\lambda) \ge \Omega(n')-2 > 9\log\log{x} \]
 (for large $x$). But $4(\ell-1) \le 4x$, and from Lemma \ref{lem:HRHT}(ii) there are only $O(x/(\log{x})^5)$ integers in $[1,4x]$ with more than $9\log\log{x}$ prime factors. In particular, this subcase corresponds to only $o(x/\log{x})$ possible values of $\ell$.

Finally, suppose $\Omega(T_{\rm CM}(d)) < 10\log\log{x}$. Since we are in case (ii), the largest prime factor $r$ of $T_{\rm CM}(d)$ satisfies
\[ r \ge (T_{\rm CM}(d))^{\frac{1}{\Omega(T_{\rm CM}(d))}} > \ell^{1/10\log\log{x}} > z\coloneqq x^{1/20\log\log{x}}. \]
Lemma \ref{lem:div} implies that $r-1 \mid 4\ell-4$. We know also that $r\ne \ell$. Appealing again to Proposition \ref{prop:LPMP}, we find that $\ell$ is restricted to a set of size $O(x (\log\log{x})^{\delta}/(\log{x})^{1+\delta})$. Again, this is negligible.
\end{proof}

\subsection*{Acknowledgments}\noindent We thank Robert S. Rumely for suggesting we investigate prime power Olson degrees. The exposition in \S\ref{sec:average} benefitted from talks by Carl Pomerance on the material in \cite{erdos35}.

The first author was supported in part by NSF grant DMS-1344994 (RTG in Algebra, Algebraic Geometry, and Number Theory, at the University of Georgia).
The third author is supported by NSF award DMS-1402268.

\providecommand{\bysame}{\leavevmode\hbox to3em{\hrulefill}\thinspace}
\providecommand{\MR}{\relax\ifhmode\unskip\space\fi MR }
\providecommand{\MRhref}[2]{%
  \href{http://www.ams.org/mathscinet-getitem?mr=#1}{#2}
}
\providecommand{\href}[2]{#2}

\end{document}